\newcounter{todocounter}
\newcommand{\ACF}{\mathrm{ACF}}
\newcommand{\bF}{\mathbb{F}}
\newcommand{\bK}{\mathbb{K}}
\newcommand{\bL}{\mathbb{L}}
\newcommand{\End}{\operatorname{End}}
\newcommand{\Endog}{\operatorname{Endog}}
\newcommand{\DefEnd}{\operatorname{DefEnd}}
\newcommand{\DefEndog}{\operatorname{DefEndog}}
\newcommand{\im}{\operatorname{im}}
\newcommand{\op}{\mathrm{op}}
\newcommand{\kat}{\operatorname{kat}}
\newcommand{\Kat}{\operatorname{Kat}}
\newcommand{\Frac}{\operatorname{Frac}}
\newcommand{\CC}{C^\sharp}
\theoremstyle{plain}
\newtheorem*{setting*}{Setting}
\newtheorem*{theorema}{Theorem A}
\newtheorem*{theoremb}{Theorem B}
\newtheorem{theorem}{Theorem}[section]
\newtheorem{corollary}[theorem]{Corollary}
\newtheorem{lemma}[theorem]{Lemma}
\newtheorem{proposition}[theorem]{Proposition}
\theoremstyle{definition}
\newtheorem*{definition*}{Definition}
\newtheorem{definition}[theorem]{Definition}
\newtheorem*{remark*}{Remark}
\newtheorem{remark}[theorem]{Remark}
\newtheorem{remarks}[theorem]{Remarks}
\author{Adrien Deloro and Frank Wagner}
\title{Endogenies and Linearisation}
\begin{document}
\begin{abstract}
We show that the action of two infinite commuting invariant rings of endomorphisms of a finite-dimensional virtually connected irreducible bi-module linearizes into a vector space over a definable field. The same holds if the action is merely by strongly commuting endogenies, modulo some finite katakernel.
\end{abstract}

\keywords{linearisation, irreducible bi-module, finite-dimensional theory, endogeny}
\subjclass[2000]{03C60, 03C45, 13L05}

\maketitle

\begin{flushright}
\begin{minipage}{.55\textwidth}%{.49\textwidth}
\small
\emph{--- Para la otra vez que lo mate --- replic\'o Scharlach --- le prometo ese laberinto, que consta de una sola l\'inea recta y que es invisible, incesante.}

\hfill
La muerte y la br\'ujula
\end{minipage}
\end{flushright}

\section*{Introduction}

The present work studies---under the model-theoretic hypothesis of finite-dimensionality---irreducible bi-modules, i.e.\ abelian groups together with two commuting subrings of endomorphisms, and which are irreducible for the bi-action. This enhances the result in \cite{DZilber}, which itself stems from our initial investigations.

In fact, we are working with a more general notion than endomorphism, namely \emph{endogenies}, which are endomorphisms {\em blurred} by a finite subgroup. We work in finite-dimensional theories in the sense of \cite{wa20} (see Definition \ref{d:dimensionality}), and contrary to customary practice, in the category of {\em virtually} connected groups (i.e., connected-by-finite). We first show that in a finite-dimensional theory, an irreducible virtually connected $(\Gamma,\Delta)$-bi-module $A$ admits a quotient by a finite subgroup $F$ such that $\Gamma$ and $\Delta$ act on $A/F$ as actual endomorphisms rather than endogenies (Theorem~A), and then characterise this action in Theorem~B.

\begin{remark*}
Irreducibility is with respect to the bi-action, and in the category of virtually connected bi-modules. In line with common model-theoretic teminology, we shall call this $(\Gamma,\Delta)$-minimality.
In case $\Gamma$ and $\Delta$ consist of \emph{endogenies} as discussed in Section~\ref{S:endogenies},
\emph{invariance} splits into a weak and a full form; the former is more important, and minimality is with respect to it (see Section~\ref{s:invariance}). Commutation is in the sharp sense of Section~\ref{s:sharpcommutation}\end{remark*}

We shall show:
\begin{theorema}
In a finite-dimensional theory, let $A$ be a definable, virtually connected, abelian group. Let $\Gamma$ and $\Delta$ be two invariant prerings of definable endogenies of $A$ such that $A$ is $(\Gamma,\Delta)$-minimal. Suppose $\Gamma$ and $\Delta$ commute sharply, both are essentially infinite, and one of them is essentially unbounded.
Then there is a finite fully $\Gamma$- and $\Delta$-invariant subgroup $F$ such that the induced action of $\Gamma$ and $\Delta$ on $A/F$ is by endomorphisms.\end{theorema}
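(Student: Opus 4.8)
The plan is to construct the finite subgroup $F$ as a carefully chosen "correction term" that absorbs all the katakernels arising from the endogenies in $\Gamma$ and $\Delta$. Recall that an endogeny of $A$ is an endomorphism blurred by a finite subgroup: concretely, it is given by a definable subgroup $R \leq A \times A$ whose projections to each factor have finite kernel (the katakernels) and finite cokernel. Passing to $A/F$ turns such an $R$ into an honest endomorphism precisely when the relevant katakernels and the ambiguity of the (multi-valued) graph are killed in the quotient. So the first step is to identify, for each $\gamma \in \Gamma$ and $\delta \in \Delta$, the finite subgroup $\kat(\gamma)$, and to understand how these katakernels interact under the prering operations (addition and composition) and under sharp commutation. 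The key point is that although $\Gamma$ is essentially infinite, the finite-dimensionality hypothesis should force a \emph{uniform bound} on the size of $\kat(\gamma)$ over all $\gamma \in \Gamma$: an unbounded family of katakernels would produce, via compactness and the dimension theory of \cite{wa20}, an infinite definable subgroup contradicting $(\Gamma,\Delta)$-minimality together with virtual connectedness. This uniform bound is what makes the whole strategy viable.

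Granting the uniform bound, the second step is to take $F$ to be the subgroup generated by all katakernels $\kat(\gamma)$ for $\gamma \in \Gamma$ together with all $\kat(\delta)$ for $\delta \in \Delta$ — or rather, since that naive generated subgroup need not be finite, to take $F$ to be the (definable) subgroup generated by a single "generic" katakernel together with its orbit under $\Gamma$ and $\Delta$; sharp commutation is exactly the hypothesis that lets one control the $\Delta$-orbit of a $\Gamma$-katakernel and vice versa, keeping the orbit inside a fixed finite set. One must then check that $F$ so defined is genuinely finite (again invoking dimensionality, plus minimality to rule out an infinite invariant subgroup), and that it is \emph{fully} $\Gamma$- and $\Delta$-invariant, which is essentially built into its construction as an orbit. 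Finally, one verifies that modulo $F$ each $\gamma$ and each $\delta$ descends to a well-defined single-valued map: the multivaluedness of the graph $R_\gamma$ is measured by $R_\gamma \cap (\{0\} \times A)$, a finite group contained in a katakernel-type object, hence in $F$; likewise the "partiality" (finite cokernel of the domain projection) is handled because $A/F$ is still virtually connected and the image of $\gamma$ on $A/F$ has finite index, but an endogeny-become-homomorphism on a virtually connected group with finite-index image is, after one more harmless enlargement of $F$, everywhere defined.

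I expect the main obstacle to be the \textbf{finiteness of $F$}. Each individual katakernel is finite by definition, but $\Gamma$ and $\Delta$ are infinite, so a priori the subgroup they generate through their katakernels could be infinite; the entire force of the hypotheses — finite-dimensionality, virtual connectedness, $(\Gamma,\Delta)$-minimality, and especially \emph{sharp} commutation plus the "essentially infinite / essentially unbounded" conditions — must be marshalled precisely to collapse this potentially infinite object to a finite one. The likely mechanism is a dimension-drop argument: if $F$ were infinite it would contain an infinite definable connected subgroup $B$; by construction $B$ is ($\Gamma,\Delta$)-invariant (being built from orbits of katakernels), so $(\Gamma,\Delta)$-minimality forces $B = A^\circ$, but then $A/F$ is finite, which is absurd since the endogenies were assumed to act on a genuinely infinite-dimensional structure. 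The delicate part is making the invariance of $B$ rigorous, since $B$ arises from a generated subgroup rather than a definable one, and this is where one needs the finite-dimensional machinery (the existence of definable hulls, connected components, and the additivity of dimension) together with the precise bookkeeping of how sharp commutation propagates katakernels between the two sides.
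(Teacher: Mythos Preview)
Your identification of $F$ as the sum of all katakernels is exactly right---this is the paper's $\Kat(\Gamma,\Delta)=\sum_{\gamma\in\Gamma}\kat\gamma+\sum_{\delta\in\Delta}\kat\delta$, and full $(\Gamma,\Delta)$-invariance of this subgroup is a short computation (the paper's Katakernel Lemma). The problem is entirely in your argument for finiteness.

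First, a uniform bound on $|\kat\gamma|$ does \emph{not} imply finiteness of $\Kat(\Gamma,\Delta)$: an infinite sum of uniformly bounded finite subgroups can perfectly well be infinite. Second, your fallback ``if $F$ were infinite it would contain an infinite definable connected subgroup $B$'' is unjustified: $\Kat(\Gamma,\Delta)$ is an invariant but a priori non-definable sum of finite groups; there is no reason it should contain an infinite \emph{definable} subgroup, let alone a connected one. Third, even granting such a $B$, your endgame is not a contradiction: $(\Gamma,\Delta)$-minimality would give $B=A^\circ$, hence $A/F$ finite, but nothing in the hypotheses forbids that---you are trying to \emph{prove} $F$ is finite, and ``$F\ge A^\circ$'' is simply the negation, not an absurdity. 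Finally, you never use the hypothesis that one of $\Gamma,\Delta$ is essentially \emph{unbounded}; this is not decorative.

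The paper's route is structurally different. It first proves a \emph{local version}: if every nonzero endogeny in $\Gamma\cup\Delta$ has finite kernel, then $\Kat(\Gamma,\Delta)$ is finite. Here essential unboundedness is the engine: if $\Kat(\Delta)$ had an infinite countable partial sum $B$ (which is weakly $\Gamma$-invariant), then $\Endog(B)$ is bounded, so two inequivalent elements of the unbounded $\Gamma$ agree on $B$, giving an infinite kernel---contradiction. The general case is then an \emph{induction on $\dim A$} via ``lines'' (minimal infinite $\Gamma$-images $\gamma[A]$): one shows each line $L$ carries local prerings $\Gamma_L,\Delta_L$ satisfying all hypotheses, applies induction to get $\Kat(\Gamma_L,\Delta_L)$ finite, and then builds quasi-projections $\pi_i\in\Gamma$ onto a quasi-direct sum of lines with $\sum_i\pi_i\sim 1$; this forces every $\kat\gamma$ and $\kat\delta$ into the finite group $\im(1-\sum_i\pi_i)$. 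The reduction to lines, and the construction of quasi-projections in $\CC(\Delta)$, are the missing ideas in your sketch.
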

In fact, $F$ is given explicitly.

\begin{theoremb}
In a finite-dimensional theory, let $A$ be a definable, connected, abelian group. Let $\Gamma$ and $\Delta$ be two invariant rings of definable endomorphisms of $A$ such that $A$ is $(\Gamma, \Delta)$-minimal. Suppose that $\Gamma$ and $\Delta$ commute, both are infinite and one of them is unbounded. Then there is a definable field $\bK$ such that $A$ is definably a $\bK$-vector space of finite linear dimension, and both $\Gamma$ and $\Delta$ act $\bK$-linearly.
\end{theoremb}

Endogenies are discussed at length in Section~\ref{S:endogenies}. More specifically, prerings are explained in Section~\ref{s:prering}; essential sizes are discussed in Section~\ref{s:equivalence}; sharp commutation is in Section~\ref{s:sharpcommutation}; then the global katakernel is defined in Section~\ref{s:radical}.
Theorem A is proven in Section~\ref{S:radicalisation} and Theorem B in Section~\ref{S:linearisation}.

To our knowledge, this is the first time endogenies are used for linearisation. Linearisation using \emph{endomorphisms} is now common and goes back to Zilber (see \cite[\S~1.2]{DZilber} for an attempted history; we could however not locate Zilber's famous `\textsc{viniti} preprint').

\subsection*{Acknowledgements.}
The first author has a variety of institutions to thank: Stellenbosch University, the Hausdorff Research Institute for Mathematics, and Université Claude Bernard Lyon 1.
The second author would like to acknowledge the support of the Science Committee of the Ministry of Science and Higher Education of the Republic of Kazakhstan (Grant No. AP19677451).

\section{Endogenies}\label{S:endogenies}
This section describes a variation on endomorphisms called \emph{endogenies}, which are like morphisms but `blurred' by finite subgroups.
Some of their properties can disconcert at first and one may wish to spend a little time computing and checking statements in order to become familiar with them. We shall add some definability and connectedness in Sections~\ref{s:dimensionality} and~\ref{s:connectedness}; the beginning is purely algebraic.

Endogenies abound in the theory of elliptic curves, with which we assume no familiarity.
A simpler example arises as follows. Let $\bK \models \ACF_p$ and $A = (\bK, + )$. The Artin-Schreier endomorphism of $A$, viz., $f(x) = x^p - x$, is onto with a finite kernel. Its inverse relation $f^{-1}$ has domain $A$ and is additive (viz., is a subgroup of $A\times A$), but behaves like a $p$-valued polymorphism. For instance it `takes' $0$ to the subgroup $\bF_p$.
%, which is the katakernel of $f^{-1}$. 
To gain intuition through practice, one should compose $f^{-1}$ with itself, and then with $f$ on both sides.

To our knowledge endogenies made their appearance in model theory (under the name of {\em quasi-endomorphism}) in the beautiful paper \cite{BCM79} which proved that $\aleph_0$-categorical $\aleph_0$-stable connected groups are abelian. The arise quite naturally: If an abelian group $A$ decomposes as almost linearly independent sum of definable minimal groups, then any definable group gives rise to endogenies between the components, and is itself characterized by these endogenies. Ever since this has been a major ingredient in the study of $\aleph_0$-categorical tame groups \cite{W93, EW00,DW20}.

\subsection{The definition}

The general definition could be formulated with non-commutative groups, but we shall quickly require abelianity. We therefore assume it from the outset.

Let $A_1, A_2$ be abelian groups.

\begin{definition}
An \emph{endogeny} from $A_1$ to $A_2$ is a 
subgroup $\gamma \leq A_1 \times A_2$ with the following properties:
\begin{itemize}
\item
$\gamma$ is global, viz., $\pi_1(\gamma) = A_1$ (where $\pi_i\colon A_1 \times A_2 \to A_i$ is the canonical projection);
\item
the \emph{katakernel} $\kat \gamma = \{a\in A_2:(0,a)\in\gamma\}$
%\pi_2(\pi_1^{-1}(0)) \leq A_2$ 
is finite.
\end{itemize}
If $A_2 = A_1 = A$, we simply call $\gamma$ an endogeny {\em of} $A$.
\end{definition}

\begin{remarks}\leavevmode
\begin{itemize}
\item
In Section~\ref{s:equivalence} we shall introduce a natural equivalence relation on endogenies.
\item
(The graph of) a morphism from $A_1$ to $A_2$ is also an endogeny; the converse does not hold.
\item
An endogeny from $A_1$ to $A_2$ may also be viewed:
\begin{itemize}
\item
as an additive relation $\gamma$ on $A_1 \times A_2$ with $\pi_1\gamma = A_1$ and finite fibre over $0\in A_1$;
\item
as a multi-but-finite-valued morphism $A_1 \to A_2$;
\item
or as a (classical) morphism $g\colon A_1 \to A_2/\kat \gamma$.
%
%There is a one-one correspondence between endogenies from $A_1$ to $A_2$, and pairs $(F, g)$ where $F \leq A_2$ is a finite subgroup and $g\colon A_1 \to A_2/F$ is a morphism.
\end{itemize}
\end{itemize}
\end{remarks}

Be careful that the image of an element $a_1 \in A_1$ under $\gamma$ is a \emph{set}, namely a coset of $\kat \gamma$. To avoid confusion we write $\gamma[a_1]$, with square brackets. Hence $\kat \gamma = \gamma[0]$ and $\im \gamma = \gamma[A]$.
Before going any further, one should check under all interpretations above that $\gamma^{-1}[\kat \gamma] = \ker \gamma$.

\subsection{The prering of endogenies}\label{s:prering}

\begin{definition}
Let $A$ be an abelian group, and $\Endog(A)$ the set of endogenies of $A$. We equip $\Endog(A)$ with the following two operations:
\begin{itemize}
\item
pointwise addition/subtraction, viz.:
\[(\gamma_1 + \gamma_2) : = \{(a, b_1 + b_2): b_1 \in \gamma_1[a] \wedge b_2 \in \gamma_2[a]\};\]
\item
product as composition of relations, viz.:
\[(\gamma_1 \circ \gamma_2) : = \{(a, c): \exists b\, (b \in \gamma_2[a]) \wedge (c \in \gamma_1[b])\}.\]
\end{itemize}
\end{definition}

One should quickly check that $\Endog(A)$ is closed under these operations. We omit $\circ$ in practice. One sees easily that $\kat(\gamma_1 + \gamma_2) = \kat \gamma_1 + \kat \gamma_2$ and $\kat (\gamma_1 \gamma_2) = \gamma_1[\kat \gamma_2]$.

\begin{remark}
Functional interpretation:
\begin{itemize}
\item
Each $\gamma_i$ corresponds to a morphism $g_i\colon A \to A/\kat \gamma_i$, which composes to $h_i\colon A\to A/(\kat \gamma_1 + \kat \gamma_2)$. The sum $\gamma_1 + \gamma_2$ corresponds to $h_1+h_2:A\to A/(\kat \gamma_1 + \kat \gamma_2)$.
\item
Also, $\gamma_1$ induces $g'_1\colon A/\kat \gamma_2 \to A/\gamma_1[\kat \gamma_2]$, and $\gamma_1 \gamma_2$ corresponds to $g'_1 g_2:A\to A/\gamma_1[\kat\gamma_2]$.
\end{itemize}
\end{remark}
In the sequel, we shall avoid these alternative points of view.

\begin{lemma}[Prering Lemma]\label{l:almostaring}
$(\Endog(A); + , \circ)$ is a \emph{prering}, in the following sense:
\begin{itemize}
\item
$(\Endog(A); + )$ is associative and commutative; the zero morphism is neutral;
\item
$(\Endog(A); \circ)$ is associative; the identity morphism is neutral;
\item
right-distributivity holds, viz., $\gamma(\delta_1 + \delta_2) = \gamma \delta_1 + \gamma \delta_2$;
\item
left-distributivity almost holds. More precisely, one has $(\gamma_1 + \gamma_2)\delta \leq \gamma_1 \delta + \gamma_2 \delta$ but only $\gamma_1 \delta + \gamma_2\delta \leq (\gamma_1 + \gamma_2) \delta + \gamma_2[\kat \delta]=
(\gamma_1 + \gamma_2) \delta + \gamma_1[\kat \delta]$.
\end{itemize}
\end{lemma}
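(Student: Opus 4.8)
The statement is the Prering Lemma, a bundle of algebraic identities and inclusions about $(\Endog(A);+,\circ)$. My plan is to verify each bullet point directly from the relational definitions, treating an endogeny as a subgroup $\gamma\le A\times A$ with $\pi_1(\gamma)=A$ and finite $\kat\gamma=\gamma[0]$, and working with the square-bracket notation where $\gamma[a]$ is a coset of $\kat\gamma$. Throughout I would use the two formulas already observed in the excerpt: $\kat(\gamma_1+\gamma_2)=\kat\gamma_1+\kat\gamma_2$ and $\kat(\gamma_1\gamma_2)=\gamma_1[\kat\gamma_2]$. A convenient lemma to record first is the behaviour of $\gamma[\,\cdot\,]$ on sums: for any subset (or coset) $S$, $\gamma[S+a]=\gamma[S]+\gamma[a]$ whenever this makes sense, since $\gamma$ is a subgroup of $A\times A$; and $\gamma[a+a']=\gamma[a]+\gamma[a']-\kat\gamma$, i.e. $\gamma[a]+\gamma[a']\subseteq\gamma[a+a']$ with equality up to a $\kat\gamma$-coset. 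These two observations are exactly what will force left-distributivity to fail and will pin down the precise error term.

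The easy bullets come first. Associativity and commutativity of $+$ follow because pointwise addition of cosets inherits these from $(A;+)$; the zero morphism $\{(a,0):a\in A\}$ is clearly neutral since adding its unique value $0$ changes nothing. Associativity of $\circ$ is associativity of composition of binary relations, which holds for all relations and hence for endogenies; the diagonal $\Id=\{(a,a):a\in A\}$ is neutral for composition. Right-distributivity $\gamma(\delta_1+\delta_2)=\gamma\delta_1+\gamma\delta_2$ I would prove by a double inclusion chasing elements: $(a,c)\in\gamma(\delta_1+\delta_2)$ means $c\in\gamma[b]$ for some $b\in(\delta_1+\delta_2)[a]=\delta_1[a]+\delta_2[a]$, say $b=b_1+b_2$ with $b_i\in\delta_i[a]$; then $\gamma[b]=\gamma[b_1+b_2]\supseteq\gamma[b_1]+\gamma[b_2]$, and conversely every $c_1+c_2$ with $c_i\in\gamma[b_i]$, $b_i\in\delta_i[a]$, lies in $\gamma[b_1+b_2]$ for $b_1+b_2\in(\delta_1+\delta_2)[a]$ — and a short check shows the two sides describe the same cosets once one accounts for $\kat\gamma$. (Here the right-hand distributes cleanly because the common factor $\gamma$ is applied on the outside, so we only ever evaluate $\gamma$ once.)

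The substantive bullet is the failure of left-distributivity, and I expect this to be the main obstacle in the sense that the bookkeeping of katakernels must be done carefully. One inclusion is painless: $(\gamma_1+\gamma_2)\delta\le\gamma_1\delta+\gamma_2\delta$, because if $c\in(\gamma_1+\gamma_2)[b]=\gamma_1[b]+\gamma_2[b]$ for some $b\in\delta[a]$, then $c=c_1+c_2$ with $c_i\in\gamma_i[b]$, so $c_i\in(\gamma_i\delta)[a]$ for the \emph{same} witness $b$. The reverse fails precisely because in $\gamma_1\delta+\gamma_2\delta$ one is allowed two different preimages $b,b'\in\delta[a]$, which differ by an element of $\kat\delta$; thus $c_1\in\gamma_1[b]$ and $c_2\in\gamma_2[b']$ with $b'=b+k$, $k\in\kat\delta$, and $c_1+c_2\in\gamma_1[b]+\gamma_2[b]+\gamma_2[k]\subseteq(\gamma_1+\gamma_2)\delta[a]+\gamma_2[\kat\delta]$. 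This yields $\gamma_1\delta+\gamma_2\delta\le(\gamma_1+\gamma_2)\delta+\gamma_2[\kat\delta]$; symmetrically, pushing the discrepancy onto the other summand gives the bound with $\gamma_1[\kat\delta]$ instead. The asserted equality $(\gamma_1+\gamma_2)\delta+\gamma_2[\kat\delta]=(\gamma_1+\gamma_2)\delta+\gamma_1[\kat\delta]$ I would justify by noting $\gamma_1[\kat\delta]-\gamma_2[\kat\delta]\subseteq(\gamma_1-\gamma_2)[\kat\delta]\subseteq(\gamma_1-\gamma_2)\delta[0]=\kat((\gamma_1-\gamma_2)\delta)$, and since $(\gamma_1+\gamma_2)\delta$ and $(\gamma_1-\gamma_2)\delta$ have comparable katakernels absorbed on the left — more simply, both error terms are finite subgroups contained in $(\gamma_1+\gamma_2)\delta[0]+\gamma_1[\kat\delta]+\gamma_2[\kat\delta]$ after adding $(\gamma_1+\gamma_2)\delta$, so each side equals $(\gamma_1+\gamma_2)\delta+\gamma_1[\kat\delta]+\gamma_2[\kat\delta]$. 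The one point needing genuine care is checking that all these "sums" of subgroups of $A\times A$ are again endogenies, i.e. that finiteness of katakernels is preserved — but that is guaranteed by the formulas $\kat(\gamma_1+\gamma_2)=\kat\gamma_1+\kat\gamma_2$ and $\kat(\gamma_1\gamma_2)=\gamma_1[\kat\gamma_2]$ together with finiteness of each $\kat\gamma_i$, so no infinitary difficulty arises.
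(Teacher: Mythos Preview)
Your proposal is correct and follows essentially the same element-chasing approach as the paper: both dismiss the first two bullets as routine and verify (partial) distributivity by picking witnesses in the relational definitions and tracking katakernel discrepancies. Your treatment of the equality $(\gamma_1+\gamma_2)\delta+\gamma_2[\kat\delta]=(\gamma_1+\gamma_2)\delta+\gamma_1[\kat\delta]$ is slightly murky mid-argument, but your final claim that both sides equal $(\gamma_1+\gamma_2)\delta+\gamma_1[\kat\delta]+\gamma_2[\kat\delta]$ is correct (since $\gamma_2[\kat\delta]\subseteq(\gamma_1+\gamma_2)[\kat\delta]+\gamma_1[\kat\delta]$, and the first summand is already $\kat((\gamma_1+\gamma_2)\delta)$); the paper in fact leaves this equality implicit.
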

\begin{proof}
The first claims are clear and we focus on (partial) distributivity.
\begin{itemize}
\item
Let $b \in \gamma (\delta_1 + \delta_2)[a]$. By definition, there are $x_1, x_2$ such that $x_i \in \delta_i[a]$ and $b \in \gamma[x_1 + x_2]$. Since endogenies are global, we take $b_i \in \gamma[x_i]$. Then $b_i \in \gamma \delta_i[a]$ and $b_1 + b_2 \in (\gamma \delta_1 + \gamma \delta_2)[a]$.

By definition, $b_1 + b_2 \in \gamma[x_1 + x_2]$; so $b - b_1 - b_2 \in \kat \gamma \leq \kat (\gamma \delta_1) \leq \kat (\gamma \delta_1 + \gamma \delta_2)$. Hence $b \in (\gamma \delta_1 + \gamma\delta_2)[a]$.
\item
Let $b \in (\gamma \delta_1 + \gamma \delta_2)[a]$. By definition, there are $b_i \in \gamma \delta_i[a]$ with $b = b_1 + b_2$; this is witnessed by $x_i \in \delta_i[a]$ and $b_i \in \gamma[x_i]$. Then $x_1 + x_2 \in (\delta_1 + \delta_2)[a]$ and $b = b_1 + b_2 \in \gamma[x_1 + x_2] \leq \gamma(\delta_1 + \delta_2)[a]$, as claimed.
\item
Let $b \in (\gamma_1 + \gamma_2)\delta[a]$. Then there are $b_1, b_2, x$ with $x \in \delta[a]$, $b_i \in \gamma_i[x]$, and $b = b_1 + b_2 \in \gamma_1\delta[a] + \gamma_2 \delta[a] = (\gamma_1 \delta + \gamma_2 \delta)[a]$.
\item
Finally let $b \in (\gamma_1 \delta + \gamma_2 \delta)[a]$. By definition there are $b_i$ and $x_i$ with $b = b_1 + b_2$, $x_i \in \delta[a]$ and $b_i \in \gamma_i[x_i]$. Since endogenies are global, there is $y \in \gamma_2[x_1]$. Then $b_1 + y \in (\gamma_1 + \gamma_2)[x_1] \leq (\gamma_1 + \gamma_2)\delta[a]$.

We now estimate $b - b_1 - y$. By construction, $x_1 - x_2 \in \kat \delta$. Moreover $y \in \gamma_2[x_1]$ and $b_2 \in \gamma_2[x_2]$ so $b - b_1 - y = b_2 - y \in \gamma_2[x_2 - x_1] \leq \kat \gamma_2[\delta]$, as claimed.
\qedhere
\end{itemize}
\end{proof}

We shall accordingly use the phrase \emph{sub-prering} for a subset of $\Endog(A)$ which is closed under $ +$, $-$ and $ \cdot$.

\subsection{Equivalence of endogenies}\label{s:equivalence}

Of course $\gamma - \gamma = z_{\kat\gamma}$, the endogeny taking all $A$ to $\kat \gamma$. As a relation, this is simply $A \times \kat \gamma$. This begs for an equivalence relation.

\begin{definition}
Two endogenies $\gamma_1, \gamma_2$ are \emph{equivalent} if there is a finite $F \leq A$ such that for all $a \in A$, one has $\gamma_1[a] + F = \gamma_2[a] + F$. We shall denote equivalence by $\gamma_1\sim\gamma_2$.
\end{definition}
This clearly is an equivalence relation.

\begin{remark}
Alternative ways to rephrase this condition are:
\begin{itemize}
\item
there is a finite subgroup $F\le A$ such that
$\gamma_1+z_F=\gamma_2+z_F$;
\item
there is a finite subgroup $F\le A$ such that
$\gamma_1-\gamma_2\le A\times F$ as subgroups of $A^2$;
\item
there is a finite subgroup $F\le A$ such that
$\kat\gamma_1+\kat\gamma_2\le F$ and $\gamma_1$ and $\gamma_2$ induce the same morphism $A \to A/F$. (Note that $F=\kat\gamma_1+\kat\gamma_2$ is minimal with this property.)
\end{itemize}
\end{remark}

\begin{remarks}\leavevmode
\begin{itemize}
\item
There even is a preordering $\gamma_1 \preceq \gamma_2$ whose associated equivalence is the above. Morphisms are minimal endogenies in this preordering, but there are other minimal endogenies.
\item
Not every endogeny is equivalent to a morphism (e.g., the inverse of the Artin-Schreier morphism).
\end{itemize}
\end{remarks}

\begin{corollary}[to Lemma~\ref{l:almostaring}]
Equivalence of endogenies %$\sim$ 
is preserved under $ + $ and $\circ$. Moreover $\Endog(A)/{\sim}$ is a ring.
\end{corollary}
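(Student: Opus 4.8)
The plan is to prove the two assertions in turn: first that $\sim$ is a congruence for $+$ and $\circ$ on $\Endog(A)$, so that these operations descend to $\Endog(A)/{\sim}$; and then that the descended structure satisfies the ring axioms, the only input beyond the Prering Lemma (Lemma~\ref{l:almostaring}) being that a constant endogeny $z_F$ with $F\le A$ finite is $\sim$-trivial. I would rely on two elementary facts about a global endogeny $\gamma\le A\times A$. First, for nonempty $X,Y\subseteq A$ one has $\gamma[X+Y]=\gamma[X]+\gamma[Y]$: the inclusion $\supseteq$ is immediate, while for $\subseteq$, given $(x+y,c)\in\gamma$ with $x\in X$, $y\in Y$, globality furnishes $(x,b)\in\gamma$, whence $(y,c-b)\in\gamma$ and $c=b+(c-b)$. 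Second, if $F\le A$ is a finite subgroup then $\gamma[F]=\pi_2(\gamma\cap(F\times A))$ is again a finite subgroup, and likewise $\kat\gamma$ and the other derived sets below are finite subgroups. Throughout I would use the reformulation ``$\gamma_1\sim\gamma_2$ iff $\gamma_1+z_F=\gamma_2+z_F$ for some finite $F\le A$'' from the remark above, together with the evident identity $z_F+z_{F'}=z_{F+F'}$.

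Compatibility with $+$ is then purely formal: if $\gamma_i\sim\gamma_i'$ is witnessed by $F_i$, then $(\gamma_1+\gamma_2)+z_{F_1+F_2}=(\gamma_1+z_{F_1})+(\gamma_2+z_{F_2})=(\gamma_1'+z_{F_1})+(\gamma_2'+z_{F_2})=(\gamma_1'+\gamma_2')+z_{F_1+F_2}$, using only commutativity and associativity of $+$. For $\circ$ I would change one factor at a time and appeal to transitivity. If $\gamma\sim\gamma'$ is witnessed by $F$, then for every $a$ one has $(\gamma\delta)[a]+F=\bigcup_{b\in\delta[a]}(\gamma[b]+F)=\bigcup_{b\in\delta[a]}(\gamma'[b]+F)=(\gamma'\delta)[a]+F$, so $\gamma\delta\sim\gamma'\delta$ with the same witness. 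If instead $\delta\sim\delta'$ is witnessed by $F$, set $F':=\gamma[F]$, finite by the second fact; then the first fact (applied to the nonempty sets $\delta[a]$ and $F$) gives $(\gamma\delta)[a]+F'=\gamma[\delta[a]]+\gamma[F]=\gamma[\delta[a]+F]=\gamma[\delta'[a]+F]=(\gamma\delta')[a]+F'$, so $\gamma\delta\sim\gamma\delta'$. Chaining these, $\gamma\delta\sim\gamma'\delta\sim\gamma'\delta'$ whenever $\gamma\sim\gamma'$ and $\delta\sim\delta'$.

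It then remains to check the ring axioms for the well-defined operations on $\Endog(A)/{\sim}$. Associativity and commutativity of $+$, associativity of $\circ$, neutrality of the zero and identity morphisms, and right-distributivity all descend verbatim from the Prering Lemma; additive inverses exist since $\gamma+(-\gamma)=z_{\kat\gamma}\sim 0$. The only axiom with any content is left-distributivity: the Prering Lemma only yields $(\gamma_1+\gamma_2)\delta\le\gamma_1\delta+\gamma_2\delta\le(\gamma_1+\gamma_2)\delta+z_{\gamma_2[\kat\delta]}$, but $\kat\delta$ is a finite subgroup, so $\gamma_2[\kat\delta]$ is finite, $z_{\gamma_2[\kat\delta]}\sim 0$, and from $\alpha\le\beta\le\alpha+z_F$ with $F$ finite one reads off $\beta[a]+F=\alpha[a]+F$, i.e.\ $\alpha\sim\beta$; hence $(\gamma_1+\gamma_2)\delta\sim\gamma_1\delta+\gamma_2\delta$. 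I do not anticipate a genuine obstacle; the only thing to be careful about is that every blurring subgroup manufactured along the way --- $F_1+F_2$, $\gamma[F]$, $\kat\gamma$, $\gamma_2[\kat\delta]$ --- is genuinely finite, which is precisely what the second elementary fact delivers.
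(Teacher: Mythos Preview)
Your proposal is correct and follows the same line as the paper, which dismisses the compatibility of $\sim$ with $+$ and $\circ$ as ``tedious and easy'' and then observes that left-distributivity holds modulo equivalence by the Prering Lemma. You have simply supplied the details the paper omits; in particular your treatment of the right-factor change via $\gamma[\delta[a]+F]=\gamma[\delta[a]]+\gamma[F]$ and your reading of the almost-left-distributivity inequality as $\alpha\le\beta\le\alpha+z_F$ with $F=\gamma_2[\kat\delta]$ finite are exactly what is needed.
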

\begin{proof}
The first is tedious and easy. Then by Lemma~\ref{l:almostaring}, distributivity holds modulo equivalence.
\end{proof}

In practice the prering $\Endog(A)$ and its quotient ring $\Endog(A)/\!\sim$ compete with each other. Model theory tends to favour the former as closer to definability issues; algebraic arguments however take place in the latter.

\begin{definition}
A set $X \subset \Endog(A)$ is \emph{essentially infinite} if its image is infinite in $\Endog(A)/{\sim}$.
\end{definition}

Later, when dealing with model-theoretic unboundedness, we shall use the phrase `essentially unbounded'.

\subsection{Sharp commutation of endogenies}\label{s:sharpcommutation}

The usual notion of commutation (as relations) turns out to be too restrictive for our purposes.
On the other hand, commutation modulo equivalence is too weak.
We strike the balance as follows.

\begin{definition}
Two endogenies $\gamma$ and $\delta$ \emph{commute sharply} if
\[\im (\gamma \delta - \delta \gamma) \leq \kat \gamma + \kat \delta\]
(equality then follows).
We write $\CC(\gamma)$ for the set of endogenies commuting sharply with $\gamma$.
\end{definition}

\begin{remark}
The condition is more restrictive than finiteness of $\im (\gamma\delta - \delta\gamma)$. In particular, not every endogeny commutes with itself; for instance the inverse endogeny of $(x\mapsto x^p - x)$ does not. (A morphism of course does.)
\end{remark}

\begin{lemma}[Sharp Commutation Lemma]\label{l:cokerinvariant}\leavevmode
\begin{enumerate}%[label = (\roman*)]
\item\label{i:cokerinvariance}
If $\gamma$ and $\delta$ commute sharply, then $\delta[\kat \gamma] \leq \kat \gamma + \kat \delta$.
\item\label{i:commutationprering}
$\CC(\gamma) \subseteq \Endog(A)$ is a sub-prering of $\Endog(A)$, viz., is closed under $ + $, $-$ and $\circ$.
\end{enumerate}
\end{lemma}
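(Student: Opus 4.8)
The plan is to dispatch the two items in turn, prefaced by a free observation that also justifies the parenthetical ``(equality then follows)'' in the definition. For arbitrary endogenies $\gamma,\delta$, the identities $\kat(\gamma_1+\gamma_2)=\kat\gamma_1+\kat\gamma_2$ and $\kat(\gamma_1\gamma_2)=\gamma_1[\kat\gamma_2]$ give $\kat(\gamma\delta-\delta\gamma)=\gamma[\kat\delta]+\delta[\kat\gamma]$; and since $0$ lies in every katakernel, $\kat\gamma=\gamma[0]\subseteq\gamma[\kat\delta]$ and symmetrically $\kat\delta\subseteq\delta[\kat\gamma]$, so that $\kat\gamma+\kat\delta\subseteq\kat(\gamma\delta-\delta\gamma)\subseteq\im(\gamma\delta-\delta\gamma)$, using $\kat\varepsilon\subseteq\im\varepsilon$ throughout. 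Item~\ref{i:cokerinvariance} now drops out: under sharp commutation the rightmost set is in turn contained in $\kat\gamma+\kat\delta$, so all these containments are equalities, and in particular $\delta[\kat\gamma]\subseteq\gamma[\kat\delta]+\delta[\kat\gamma]=\kat\gamma+\kat\delta$.

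For item~\ref{i:commutationprering} it suffices to prove $\CC(\gamma)$ closed under unary negation, under binary~$+$, and under~$\circ$; closure under binary~$-$ then follows. Negation is a purely relational check: $\gamma(-\delta)=-(\gamma\delta)$ and $(-\delta)\gamma=-(\delta\gamma)$, hence $\gamma(-\delta)-(-\delta)\gamma=-(\gamma\delta-\delta\gamma)$ has the same image, while $\kat(-\delta)=\kat\delta$, so $-\delta\in\CC(\gamma)$. For~$+$, let $\delta_1,\delta_2\in\CC(\gamma)$ and take an element $c=u-v$ of $(\gamma(\delta_1+\delta_2)-(\delta_1+\delta_2)\gamma)[a]$: right-distributivity writes $u=u_1+u_2$ with $u_i\in\gamma\delta_i[a]$, while a \emph{single} witness $x\in\gamma[a]$ writes $v=v_1+v_2$ with $v_i\in\delta_i[x]\subseteq\delta_i\gamma[a]$; hence $c=(u_1-v_1)+(u_2-v_2)$ with $u_i-v_i\in(\gamma\delta_i-\delta_i\gamma)[a]\subseteq\kat\gamma+\kat\delta_i$, so $c\in\kat\gamma+\kat\delta_1+\kat\delta_2=\kat\gamma+\kat(\delta_1+\delta_2)$ and $\delta_1+\delta_2\in\CC(\gamma)$.

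Closure under~$\circ$ is the heart of the matter. With $\delta_1,\delta_2\in\CC(\gamma)$ fixed, the goal is $\im(\gamma\delta_1\delta_2-\delta_1\delta_2\gamma)\subseteq\kat\gamma+\kat(\delta_1\delta_2)=\kat\gamma+\delta_1[\kat\delta_2]$. The idea is to commute $\gamma$ past $\delta_1\delta_2$ one factor at a time, first past $\delta_2$ and then past $\delta_1$, controlling the error introduced at each step. Unwinding an element of $(\gamma\delta_1\delta_2-\delta_1\delta_2\gamma)[a]$ along its chain of witnesses: sharp commutation of $\gamma$ and $\delta_2$ confines the first discrepancy to $\kat\gamma+\kat\delta_2$, and transporting it through $\delta_1$ (which is additive on subsets) enlarges it to $\delta_1[\kat\gamma+\kat\delta_2]=\delta_1[\kat\gamma]+\delta_1[\kat\delta_2]$; sharp commutation of $\gamma$ and $\delta_1$ confines the remaining discrepancy to $\kat\gamma+\kat\delta_1$. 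A priori, then, the image lands in $\kat\gamma+\kat\delta_1+\delta_1[\kat\gamma]+\delta_1[\kat\delta_2]$.

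\textbf{The main obstacle is this last bookkeeping}: the a priori bound exceeds the target, and the parasitic summands $\kat\delta_1$ and $\delta_1[\kat\gamma]$ must be absorbed. This is precisely the point where item~\ref{i:cokerinvariance} is needed: applied to the pair $\gamma,\delta_1$ it gives $\delta_1[\kat\gamma]\subseteq\kat\gamma+\kat\delta_1$, and $\kat\delta_1=\delta_1[0]\subseteq\delta_1[\kat\delta_2]$ since $0\in\kat\delta_2$; together these collapse the bound to $\kat\gamma+\delta_1[\kat\delta_2]=\kat\gamma+\kat(\delta_1\delta_2)$, as required. (Equivalently, one may run the whole argument through the relation ``$\alpha-\beta\subseteq A\times F$'': composing this on the left with $\delta_1$ costs only $\delta_1[F]$, by the full right-distributivity of the Prering Lemma, and composing it on the right with $\delta_2$ introduces only a further katakernel summand, by its partial left-distributivity; the concluding arithmetic is unchanged.) Apart from this, the only real friction is that the one-sided failure of left-distributivity rules out a formal prering computation and forces explicit witnesses throughout; everything that then happens is finite, additive, and mechanical.
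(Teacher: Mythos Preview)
Your proof is correct and follows essentially the same strategy as the paper's: for item~\ref{i:commutationprering} both arguments interpolate $\delta_1\gamma\delta_2$ between $\gamma\delta_1\delta_2$ and $\delta_1\delta_2\gamma$, bound the two resulting discrepancies via sharp commutation of $\gamma$ with each $\delta_i$, and invoke item~\ref{i:cokerinvariance} to absorb $\delta_1[\kat\gamma]$ into $\kat\gamma+\kat\delta_1$. Two minor differences are worth recording: your treatment of item~\ref{i:cokerinvariance} via the identity $\kat(\gamma\delta-\delta\gamma)=\gamma[\kat\delta]+\delta[\kat\gamma]$ is slicker than the paper's element chase and yields the parenthetical equality in the definition for free; and your element-level handling of composition avoids the extra summand $\gamma\delta_1[\kat\delta_2]=\gamma[\kat(\delta_1\delta_2)]$ that the paper picks up from the partial left-distributivity of the Prering Lemma and must then separately bound.
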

\begin{proof}\leavevmode
\begin{enumerate}%[label = (\roman*)]
\item
Let $a \in \kat \gamma=\gamma[0]$, and $b \in \delta[a]$. Then $b \in \delta\gamma[0]$, and $0 \in \gamma\delta[0]$. By sharp commutation,
\[b = b - 0 \in \im (\delta \gamma - \gamma \delta) \leq \kat \gamma + \kat \delta.\] Hence $\delta[\kat \gamma] \leq \kat \gamma + \kat \delta$.
\item
%The proof takes places in $\Endog(A)$, not in the quotient modulo equivalence of endogenies.
%??? Of course. This is also true for item 1.
Suppose $\delta_1, \delta_2 \in \CC(\gamma)$.
\begin{itemize}
\item
By Lemma~\ref{l:almostaring} and sharp commutation:
\begin{align*}
\im (\gamma (\delta_1 + \delta_2) - (\delta_1 + \delta_2) \gamma) & \leq \im (\gamma \delta_1 + \gamma \delta_2 - \delta_1 \gamma - \delta_2 \gamma)\\
& \leq \im (\gamma \delta_1 - \delta_1 \gamma) + \im (\gamma \delta_2 - \delta_2 \gamma)\\
& \leq \kat \gamma + \kat \delta_1 + \kat \delta_2\\
& \leq \kat \gamma + \kat(\delta_1 + \delta_2).
\end{align*}
\item
Note that  $\delta_1[\kat \gamma] \leq \kat \gamma + \kat \delta_1$ and $\gamma[\kat(\delta_1\delta_2)]\le\kat\gamma+\kat(\delta_1\delta_2)$ by 1. Moreover $\kat \delta_1 \leq \delta_1[\kat \delta_2] = \kat(\delta_1 \delta_2)$.
Thus:
\begin{align*}
\im (\gamma \delta_1 \delta_2 - \delta_1 \delta_2 \gamma) & \leq \im (\gamma \delta_1 \delta_2 - \delta_1 \gamma \delta_2 + \delta_1 \gamma \delta_2 - \delta_1 \delta_2 \gamma)\\
& \leq \im (\gamma \delta_1 - \delta_1 \gamma)+\gamma\delta_1[\kat\delta_2] + \delta_1[\im(\gamma \delta_2 - \delta_2 \gamma)]\\
& \leq \kat \gamma + \kat \delta_1 + \gamma[\kat(\delta_1\delta_2)]+\delta_1[\kat \gamma + \kat \delta_2]\\
& \leq \kat \gamma + \kat \delta_1 + \kat(\delta_1\delta_2)+ \delta_1[\kat\gamma] + \delta_1[\kat \delta_2]\\
&\leq \kat\gamma+\kat(\delta_1\delta_2).\qedhere
\end{align*}
\end{itemize}
\end{enumerate}
\end{proof}

\subsection{Invariance}\label{s:invariance}

As one expects, invariance downgrades to a weak form, allowing for a katakernel-error term. (Irreducibility consequently upgrades to a stronger form; see the definition of minimality in Section~\ref{s:connectedness}.)

\begin{definition}
Let $A$ be an abelian group, $B\le A$ a subgroup, and $\gamma$ an endogeny of $A$.\begin{itemize}
\item $B$ is
\emph{weakly $\gamma$-invariant} if $\gamma[B] \leq B + \kat \gamma$;
\item
$B$ is 
\emph{fully $\gamma$-invariant} if $\gamma[B] \leq B$.\end{itemize}
\end{definition}

\begin{remark} The sum of two (weakly/fully) $\gamma$-invariant subgroups is still (weakly/fully) $\gamma$-invariant, and the intersection of two fully $\gamma$-invariant subgroups is fully $\gamma$-invariant.
However, \emph{the intersection of two weakly $\gamma$-invariant subgroups need not be weakly $\gamma$-invariant}.

In general, weak invariance is the more useful notion of the two notions, which will be used in most instances. (If $\gamma$ is a morphism, then full $\gamma$-invariance and weak $\gamma$-invariance are the same.)
\end{remark}

\begin{lemma}[Invariance Lemma]\label{l:invariance}
Suppose $\gamma$ and $\delta$ are two sharply commuting endogenies of $A$.
Then:
\begin{enumerate}%[label = (\roman*)]
\item\label{i:cokerinvariance:reformulation}
$\kat \gamma$ is weakly $\delta$-invariant;
\item\label{i:gammaBdeltainvariant}
if $B \leq A$ is weakly $\gamma$-invariant, then so is $\delta[B]$.
\end{enumerate}
\end{lemma}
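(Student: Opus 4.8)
The plan is to prove both items by unwinding the definitions, using the Sharp Commutation Lemma (Lemma~\ref{l:cokerinvariant}) as the key input. For item~\eqref{i:cokerinvariance:reformulation}, I would simply observe that this is a direct restatement of part~\eqref{i:cokerinvariance} of the Sharp Commutation Lemma: by definition, $\kat\gamma$ is weakly $\delta$-invariant means $\delta[\kat\gamma]\le\kat\gamma+\kat\delta$, which is exactly what Lemma~\ref{l:cokerinvariant}\eqref{i:cokerinvariance} asserts (with the roles of $\gamma$ and $\delta$ as stated). So item~\eqref{i:cokerinvariance:reformulation} is immediate; it is recorded here mainly for the suggestive phrasing.

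For item~\eqref{i:gammaBdeltainvariant}, the goal is to show $\delta[\delta[B]]$—no, rather, I need $\gamma[\delta[B]]\le\delta[B]+\kat\gamma$. The natural move is to swap $\gamma$ past $\delta$ using sharp commutation. Starting from $\gamma\delta[B]$, since $\gamma$ and $\delta$ commute sharply we have $\gamma\delta = \delta\gamma + (\gamma\delta-\delta\gamma)$ with $\im(\gamma\delta-\delta\gamma)\le\kat\gamma+\kat\delta$; so pointwise $\gamma\delta[b]\subseteq \delta\gamma[b] + \kat\gamma + \kat\delta$ for each $b\in B$ (one should be a little careful here with the prering's only-partial left-distributivity, but the error terms are katakernels and absorb harmlessly). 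Now $\gamma[B]\le B+\kat\gamma$ by weak $\gamma$-invariance of $B$, hence $\delta\gamma[B]\le \delta[B+\kat\gamma] = \delta[B]+\delta[\kat\gamma]$, and by part~\eqref{i:cokerinvariance} of the Sharp Commutation Lemma $\delta[\kat\gamma]\le\kat\gamma+\kat\delta$. Putting this together,
\[
\gamma\bigl[\delta[B]\bigr]\subseteq \gamma\delta[B] \;\le\; \delta[B]+\delta[\kat\gamma]+\kat\gamma+\kat\delta \;\le\; \delta[B]+\kat\gamma+\kat\delta.
\]
Finally $\kat\delta\le\delta[B]$ since $0\in B$ (endogenies are global and $\delta[0]=\kat\delta$), so the $\kat\delta$ term is absorbed into $\delta[B]$, leaving $\gamma[\delta[B]]\le\delta[B]+\kat\gamma$, which is weak $\gamma$-invariance of $\delta[B]$.

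The only genuinely delicate point is bookkeeping the error terms through the non-exact composition of endogenies—in particular, making sure that when I write $\gamma\delta[B]$ and then pass to $\delta\gamma[B]$ the discrepancy really is contained in $\kat\gamma+\kat\delta$ and not something larger, and that set-sums like $\delta[B+\kat\gamma]=\delta[B]+\delta[\kat\gamma]$ are handled correctly as cosets rather than naively. None of this is hard, but it is the sort of computation the paper warns the reader to do carefully; everything else is formal.
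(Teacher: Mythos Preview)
Your proof is correct and follows essentially the same approach as the paper. Part~\ref{i:cokerinvariance:reformulation} is handled identically (as a restatement of Lemma~\ref{l:cokerinvariant}.\ref{i:cokerinvariance}), and for part~\ref{i:gammaBdeltainvariant} your chain of inclusions---sharp commutation to pass from $\gamma\delta[B]$ to $\delta\gamma[B]+\kat\gamma+\kat\delta$, then weak $\gamma$-invariance of $B$, then part~\ref{i:cokerinvariance:reformulation} to bound $\delta[\kat\gamma]$, and finally absorbing $\kat\delta$ into $\delta[B]$---is exactly the paper's argument, only written out in slightly more detail.
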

\begin{proof}\leavevmode
\begin{enumerate}%[label = (\roman*)]
\item
This is a reformulation of Lemma~\ref{l:cokerinvariant}.\ref{i:cokerinvariance}.
\item
By sharp commutation, $\gamma\delta[B] \leq \delta\gamma[B] + \kat \gamma + \kat \delta$. By assumption $\gamma[B] \leq B + \kat \gamma$. Since $\delta[\kat \gamma] \leq \kat \gamma + \kat \delta$ by part~\ref{i:cokerinvariance:reformulation}.\ and  $\kat\delta \leq \delta[B]$, we are done.
\qedhere
\end{enumerate}
\end{proof}

\begin{remarks}\label{r:inverse}\leavevmode
\begin{itemize}
\item
Nothing like~\ref{i:gammaBdeltainvariant}. holds with inverse images. This has annoying consequences:
\begin{itemize}
\item
$\ker \gamma$ need not be weakly $\delta$-invariant (however see Remarks \ref{r:connectedness});
\item
even if $\delta$ is invertible, $\delta^{-1}$ need not commute sharply with $\gamma$.
\end{itemize}
\item
However, \emph{if $\gamma$ is a morphism}, then $\ker \delta$ is fully $\gamma$-invariant.

Indeed if $a \in \ker \delta$, then
$$\delta[\gamma(a)]\le\gamma\delta[a]+\kat\gamma+\kat\delta\le\gamma[\kat\delta]+\kat\delta=\kat\delta,$$
so $\gamma(a)\in\delta^{-1}[\kat \delta]=\ker\delta$.
\end{itemize}
\end{remarks}

\begin{lemma}[Restriction Lemma]\label{l:restriction}
If $B \leq A$ is weakly $\gamma$-invariant, then the restriction-corestriction %$\check\gamma = \gamma_{|B}^{|B} 
$\rho_B(\gamma)= \gamma \cap B^2 \leq B^2$ is an %global
endogeny of $B$.
\end{lemma}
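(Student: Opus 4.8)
The plan is to check the two defining properties of an endogeny for $\rho_B(\gamma)=\gamma\cap B^2$, regarded as a subset of $B\times B$. That $\rho_B(\gamma)$ is a subgroup of $B^2$ is immediate, since it is the intersection of the two subgroups $\gamma$ and $B^2$ of $A^2$. So the work is to verify globality, $\pi_1(\rho_B(\gamma))=B$, and finiteness of the katakernel.

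For globality, fix $b\in B$. Since $\gamma$ is global there is some $a\in A$ with $(b,a)\in\gamma$, i.e.\ $a\in\gamma[b]$; the issue is that $a$ need not lie in $B$. This is exactly where weak $\gamma$-invariance enters: it gives $\gamma[b]\le B+\kat\gamma$, so we may write $a=b'+k$ with $b'\in B$ and $k\in\kat\gamma$. Then $(0,k)\in\gamma$, whence $(b,b')=(b,a)-(0,k)\in\gamma\cap B^2=\rho_B(\gamma)$, so $b\in\pi_1(\rho_B(\gamma))$. As $b\in B$ was arbitrary, $\pi_1(\rho_B(\gamma))=B$.

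For the katakernel, one computes directly that $\kat(\rho_B(\gamma))=\{a\in B:(0,a)\in\gamma\}=(\kat\gamma)\cap B$, which is a subgroup of the finite group $\kat\gamma$ and hence finite. This completes the verification.

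The only non-formal point is the globality step, where the target element must be corrected back into $B$ by subtracting an element of $\kat\gamma$; this is both the main (mild) obstacle and the place where the hypothesis of \emph{weak} $\gamma$-invariance of $B$ is genuinely used—without it $\rho_B(\gamma)$ could fail to project onto $B$. It is also worth noting, as a sanity check, that $\kat(\rho_B(\gamma))=(\kat\gamma)\cap B$ can be strictly smaller than $\kat\gamma$, which is consistent with the fact that restriction-corestriction may lose part of the katakernel.
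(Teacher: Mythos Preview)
Your proof is correct and follows essentially the same approach as the paper's: the paper also uses weak invariance $\gamma[B]\le B+\kat\gamma$ to find, for each $b\in B$, some $b'\in B\cap\gamma[b]$ witnessing globality, and then notes that the subgroup and finite-katakernel conditions are clear. Your version is simply more explicit about the subtraction of $(0,k)$ and the identification $\kat(\rho_B(\gamma))=\kat\gamma\cap B$.
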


Of course one then has $\kat \rho_B(\gamma) \leq \kat \gamma\cap B$. If $B$ is clear from the context, we also write $\rho_B(\gamma)=\check\gamma$.

\begin{proof}
Since $\gamma[B] \leq B + \kat \gamma$, for every $b \in B$ there is $b' \in B\cap \gamma[b]$; so $\check\gamma$ is global on $B$. It clearly is a subgroup of $B^2$, and the katakernel is finite.
\end{proof}

The map $\gamma \mapsto \check\gamma$ need not be additive nor multiplicative, except modulo equivalence.

\begin{remark}\label{r:subcommuting} If $\gamma$ and $\delta$ are two sharply commuting endogenies of $A$ and $B\le A$ is weakly $\gamma$- and $\delta$-invariant, there is no reason why $\rho_B(\gamma)$ and $\rho_B(\delta)$ should commute sharply. They do, however, commute modulo equivalence.
\end{remark}

\subsection{The global katakernel}\label{s:radical}

\begin{definition}
Let $\Gamma$ and $\Delta$ be two sharply commuting prerings of endogenies of some abelian group.
\begin{itemize}
\item
The \emph{katakernel} of $\Gamma$ is $\Kat(\Gamma) = \sum_{\gamma\in\Gamma}\kat\gamma$.
\item
The \emph{bi-katakernel} of $(\Gamma, \Delta)$ is $\Kat(\Gamma,\Delta) = \Kat(\Gamma) + \Kat(\Delta)$.
\end{itemize}
\end{definition}

\begin{lemma}[Katakernel Lemma]\label{l:radical}
Let $A$ be an abelian group and $\Gamma, \Delta$ be two sharply commuting prerings of endogenies.
\begin{enumerate}%[label=(\roman*)]
\item
The katakernel of $\Gamma$ is fully $\Gamma$-invariant.
In particular $\Gamma$ acts on $A/\Kat(\Gamma)$ by endomorphisms.
\item
The bi-katakernel of $(\Gamma, \Delta)$ is fully $\Gamma$- and $\Delta$-invariant.
Hence $\Gamma$ and $\Delta$ act on $A/\Kat(\Gamma,\Delta)$ by commuting endomorphisms.
\item\label{i:inverse}
For any $\gamma\in\Gamma$ the subgroup $\gamma^{-1}[\Kat(\Gamma,\Delta)]$ is fully $\Delta$-invariant.
\end{enumerate}
\end{lemma}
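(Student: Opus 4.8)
The plan is to treat the three items in order, each reducing to the effect of a single endogeny on a katakernel, and feeding on two facts already in hand: the formula $\kat(\gamma_1\gamma_2)=\gamma_1[\kat\gamma_2]$, and Lemma~\ref{l:cokerinvariant}.\ref{i:cokerinvariance}, which (using that sharp commutation is symmetric) tells us that a sharply commuting pair $\gamma,\delta$ satisfies both $\delta[\kat\gamma]\le\kat\gamma+\kat\delta$ and $\gamma[\kat\delta]\le\kat\gamma+\kat\delta$. First I would record one routine observation: for any endogeny $\gamma'$ one has $\gamma'[b_1+\cdots+b_n]\le\gamma'[b_1]+\cdots+\gamma'[b_n]$ — pick preimages using globality, the $\kat\gamma'$-error landing in $\gamma'[0]=\kat\gamma'\le\gamma'[b_1]$ — so $\gamma'$ carries subgroups to subgroups and $\gamma'[\sum_i B_i]\le\sum_i\gamma'[B_i]$. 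This lets me pass freely between the possibly infinite sum $\Kat(\Gamma)=\sum_\gamma\kat\gamma$ and its finite sub-sums.

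For (1), the key step is that $\Gamma$, being a prering, is closed under $\circ$, whence $\gamma'[\kat\gamma]=\kat(\gamma'\gamma)\le\Kat(\Gamma)$ for all $\gamma,\gamma'\in\Gamma$; combined with the subadditivity above this gives $\gamma'[\Kat(\Gamma)]\le\Kat(\Gamma)$. Since also $\kat\gamma\le\Kat(\Gamma)$, each $\gamma$ then descends to a single-valued endomorphism of $A/\Kat(\Gamma)$. For (2) I would prove full $\Gamma$-invariance of $\Kat(\Gamma)+\Kat(\Delta)$, the $\Delta$-side being symmetric: by (1) and subadditivity only the cross term $\gamma'[\Kat(\Delta)]$ needs attention, and that reduces to $\gamma'[\kat\delta]\le\kat\gamma'+\kat\delta\le\Kat(\Gamma,\Delta)$, which is exactly Lemma~\ref{l:cokerinvariant}.\ref{i:cokerinvariance} for the sharply commuting pair $\gamma'\in\Gamma$, $\delta\in\Delta$. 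Commutation of the induced endomorphisms on $A/\Kat(\Gamma,\Delta)$ is then immediate from $\im(\gamma\delta-\delta\gamma)\le\kat\gamma+\kat\delta\le\Kat(\Gamma,\Delta)$.

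For (3), set $R=\Kat(\Gamma,\Delta)$ and $K=\gamma^{-1}[R]$; since $\kat\gamma\le R$, the set $K$ is exactly the $\gamma$-preimage of the subgroup $R/\kat\gamma$, hence a subgroup. Given $\delta\in\Delta$ and $a\in K$, sharp commutation of $\gamma$ and $\delta$ yields $\gamma\delta[a]\le\delta\gamma[a]+\kat\gamma+\kat\delta\le\delta[\gamma[a]]+R$; since $\gamma[a]\le R$ and $R$ is fully $\Delta$-invariant by (2), the right-hand side lies in $R$, so $\gamma[b]\le R$ for every $b\in\delta[a]$, i.e. $\delta[a]\le\gamma^{-1}[R]=K$.

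I do not expect a serious obstacle — the arguments are short relation-chasing. The one conceptual point to get right, and the reason (3) is placed last, is that inverse images of invariant subgroups misbehave in general (Remarks~\ref{r:inverse}); the statement goes through only because $\Kat(\Gamma,\Delta)$ was engineered in (2) to be simultaneously $\Gamma$- and $\Delta$-invariant, which is precisely what licenses the step $\delta[R]\le R$. A minor bit of care is also needed throughout in treating images and preimages as sets (cosets of katakernels) rather than as points.
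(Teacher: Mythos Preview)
Your proposal is correct and follows essentially the same approach as the paper's proof: part~(1) via $\gamma'[\kat\gamma]=\kat(\gamma'\gamma)\le\Kat(\Gamma)$, part~(2) via Lemma~\ref{l:cokerinvariant}.\ref{i:cokerinvariance} for the cross term, and part~(3) via sharp commutation $\gamma\delta[K]\le\delta\gamma[K]+\kat\gamma+\kat\delta$ followed by the full $\Delta$-invariance of $\Kat(\Gamma,\Delta)$ from~(2). Your added remarks on subadditivity over the infinite sum and on why $K$ is a subgroup are sound and merely make explicit what the paper leaves tacit.
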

\begin{proof}\leavevmode
\begin{enumerate}%[label=(\roman*)]
\item
Let $\gamma,\gamma'\in\Gamma$. Then $\gamma[\kat\gamma'] = \gamma\gamma'[0] = \kat(\gamma\gamma') \le \Kat(\Gamma)$. This shows full $\Gamma$-invariance. Clearly, factoring out the katakernels transforms the endogenies into endomorphisms (which would not hold under \emph{weak} invariance).
\item
For $\gamma\in\Gamma$ its katakernel $\kat\gamma$ is \emph{weakly} $\Delta$-invariant by sharp commutation and Lemma~\ref{l:invariance}.\ref{i:cokerinvariance:reformulation}. So $\delta[\kat\gamma] \le \kat\gamma + \kat\delta \le \Kat(\Gamma,\Delta)$. This proves \emph{full} invariance.
\item
Put $K = \gamma^{-1}[\Kat(\Gamma,\Delta)]$, and note that $K\ge\Kat(\Gamma,\Delta)$. Consider any $\delta\in\Delta$. Then 
\[\gamma\delta[K] \le \delta\gamma[K] + \kat\gamma + \kat\delta \le \delta[\Kat(\Gamma,\Delta)] + \Kat(\Gamma,\Delta) = \Kat(\Gamma,\Delta),\]
so $\delta[K] \le \gamma^{-1}[\Kat(\Gamma,\Delta)] = K$ and $K$ is fully $\Delta$-invariant.
\qedhere
\end{enumerate}
\end{proof}

\begin{remark}
The proof of~\ref{i:inverse}.\ even shows that for any fully $\Delta$-invariant subgroup $B$ containing $\Kat(\Delta)$ and $\kat\gamma$, the inverse image $\gamma^{-1}[B]$ is fully $\Delta$-invariant.
\end{remark}

\subsection{Definability and Dimensionality}\label{s:dimensionality}
We now add some definability on $A$; an endogeny is said to be \emph{definable} if it is, as a relation.
The set of definable endogenies $\DefEndog(A)$ of $A$ is a sub-prering of $\Endog(A)$ (see~Section~\ref{s:prering}), viz., it is closed under $ +$, $-$ and $\circ$.

Our endogeny prerings will in general not be definable, but merely invariant in the following sense.

\begin{definition}
Let $A$ be a definable abelian group, and $\Gamma$ a ring of definable endogenies of $A$. We say that $\Gamma$ is {\em invariant} if there is a set $C$ of parameters such that (in a big $\kappa$-saturated monster model) $\Gamma$ is stabilized by all automorphisms which fix $C$.\end{definition}

\begin{remark} Thus $\Gamma$ is invariant if the set of parameters for the endogenies in $\Gamma$ is a union of types over $C$.\end{remark}

There is an unfortunate clash of terminology between the model-theoretic and the algebraic meaning of invariance, but both are well-established. This should not lead to confusion.

We shall now define finite-dimensionality, introduced in \cite{wa20}.

\begin{definition}\label{d:dimensionality} A theory $T$ is {\em finite-dimensional} if there is a dimension function $\dim$ from the collection of all interpretable sets in models of $T$ to $\mathbb N\cup\{\pm\infty\}$, 
satisfying for a formula $\varphi(x,y)$ and interpretable sets $X$ and $Y$:\begin{itemize}
\item{\em Invariance:} If $a\equiv a'$ then $\dim(\varphi(x,a))=\dim(\varphi(x,a'))$.
\item{\em Algebraicity:} $X$ is finite non-empty if and only if $\dim(X)=0$, and $\dim(\emptyset)=-\infty$.
\item{\em Union:} $\dim(X\cup Y)=\max\{\dim(X),\dim(Y)\}$.
\item{\em Fibration:} If $f:X\to Y$ is a interpretable map such that $\dim(f^{-1}(y))=d$ for all $y\in Y$, then $\dim(X)=\dim(Y)+d$.
\end{itemize}
\end{definition}

Examples of finite-dimensional theories include groups of finite Morley rank (where dimension is Morley rank), groups of finite SU-rank (where dimension is SU-rank), and groups definable in $o$-minimal theories (where dimension is $o$-minimal dimension).

The following will be used without mention and follows immediately from the fibration axiom.

\begin{lemma}[Dimension Lemma]
In a finite-dimensional theory, let $A$ be a definable group and $\gamma$ be a definable endogeny of $A$. Then $\dim \ker \gamma + \dim \im \gamma = \dim A$.
\end{lemma}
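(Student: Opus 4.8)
The plan is to read the identity off the Fibration axiom, applied to the two coordinate projections of the definable subgroup $\gamma \leq A \times A$. (These are interpretable maps between interpretable groups, so the dimension function applies to all the sets involved.)

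First I would apply Fibration to $\pi_1 \colon \gamma \to A$. Globality of $\gamma$ says this map is onto, and its fibre over $a \in A$ is $\{a\} \times \gamma[a]$, which is a coset of the finite group $\kat \gamma$; by Algebraicity such a fibre has dimension $0$. Fibration then yields $\dim \gamma = \dim A$.

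Next I would apply Fibration to the corestricted projection $\pi_2 \colon \gamma \to \im \gamma$, which is onto by the very definition $\im \gamma = \gamma[A]$. Its fibre over $b \in \im \gamma$ is $\gamma^{-1}[b] \times \{b\}$, which is non-empty precisely because $b \in \im \gamma$, and which is a coset of the subgroup $H = \{a \in A : (a,0) \in \gamma\}$; hence every fibre has dimension $\dim H$. Fibration gives $\dim \gamma = \dim \im \gamma + \dim H$. Finally, $\ker \gamma = \gamma^{-1}[\kat \gamma]$ is the union of the finitely many cosets $\gamma^{-1}[c]$ of $H$, for $c$ ranging over the finite group $\kat \gamma$, so $\dim \ker \gamma = \dim H$ by Union and Algebraicity. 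Combining the three equalities gives $\dim A = \dim \gamma = \dim \im \gamma + \dim H = \dim \im \gamma + \dim \ker \gamma$.

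No real obstacle is expected here: the content is entirely the Fibration axiom, and the only point requiring a moment's care is to corestrict $\pi_2$ to $\im \gamma$ so that all fibres under consideration are non-empty (a fibration argument over a base with points of empty fibre would be meaningless), together with the harmless observation that $\ker \gamma$ and $H$ differ by finite index and therefore have equal dimension.
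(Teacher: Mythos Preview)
Your proof is correct and follows exactly the paper's approach: compute $\dim\gamma$ via the two projections using Fibration. One simplification: your subgroup $H=\{a:(a,0)\in\gamma\}$ already \emph{equals} $\ker\gamma$ (since $\gamma[a]$ is a coset of $\kat\gamma$, having $c\in\gamma[a]$ for some $c\in\kat\gamma$ forces $\gamma[a]=\kat\gamma$, i.e.\ $0\in\gamma[a]$), so the final union-of-cosets step is unnecessary.
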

\begin{proof}
Treat $\gamma$ as a subgroup of $A^2$. The first projection has image $A$ and kernel isomorphic to $\kat \gamma$, so $\dim \gamma = \dim A$. Now the second projection has image $\im \gamma$ and kernel $\ker \gamma$, so $\dim \gamma = \dim \ker \gamma + \dim \im \gamma$.
\end{proof}

In particular, $|A:\im\gamma|$ is finite if and only if $\ker\gamma$ is finite.

\subsection{Connectedness and minimality}\label{s:connectedness}

The notion of connectedness is the usual one. Having a connected component is equivalent to being virtually connected. 

\begin{lemma}[Connectedness Lemma]\label{l:connectedness}
Let $B\le A$ be a virtually connected subgroup, and $\gamma$ an endogeny of $A$. Then $\gamma[B]$ is virtually connected. Moreover, $\gamma[B^\circ]=\gamma[B]^\circ+\kat\gamma$.
\end{lemma}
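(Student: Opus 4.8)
The plan is to treat $\gamma$ as a subgroup of $A^2$ and exploit the two coordinate projections, together with the fact that group homomorphisms preserve virtual connectedness and map connected components onto connected components. First I would recall that $\gamma \cap (B \times A)$ is a subgroup of $A^2$; its first projection is $B$ (since $\gamma$ is global, every $b \in B$ has a non-empty fibre), and its image under the second projection is exactly $\gamma[B]$. Because $B$ is virtually connected, $B \times A$ need not be, but $\gamma \cap (B\times A)$ has first projection $B$ with fibres that are cosets of $\kat\gamma$ — a \emph{finite} group — so $\gamma \cap (B\times A)$ is virtually connected (it is an extension of the virtually connected $B$ by the finite $\kat\gamma$, or directly: its connected component has finite index since $\dim$ and index behave well under the fibration axiom). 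Then $\gamma[B]$, being the image of a virtually connected group under the homomorphism $\pi_2$, is virtually connected. This disposes of the first assertion.

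For the identity $\gamma[B^\circ] = \gamma[B]^\circ + \kat\gamma$, I would work with $H := \gamma \cap (B \times A) \le A^2$ and its connected component $H^\circ$. The key observation is that $\pi_1(H) = B$ and $\pi_1(H^\circ) = B^\circ$: the homomorphism $\pi_1$ sends $H^\circ$ onto $(\pi_1 H)^\circ = B^\circ$, since a surjective homomorphism of virtually connected groups carries the connected component onto the connected component. Now $\gamma[B^\circ]$ is, by definition, $\pi_2$ of the subgroup $\gamma \cap (B^\circ \times A) = \{(b,a) \in H : b \in B^\circ\} = \pi_1^{-1}(B^\circ) \cap H$, and since $\pi_1^{-1}(B^\circ)\cap H \supseteq H^\circ$ with the containment having finite index (indeed $\pi_1^{-1}(B^\circ)\cap H = H^\circ + (\{0\}\times\kat\gamma)$, because both have the same image $B^\circ$ under $\pi_1$ and the same kernel-coset structure over $0$), we get $\gamma[B^\circ] = \pi_2(H^\circ) + \kat\gamma$. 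On the other hand $\pi_2(H^\circ) = (\pi_2 H)^\circ = \gamma[B]^\circ$ by the same surjectivity principle applied to $\pi_2 : H \twoheadrightarrow \gamma[B]$. Combining, $\gamma[B^\circ] = \gamma[B]^\circ + \kat\gamma$.

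The only subtlety — and the step I expect to require the most care — is the identification $\pi_1^{-1}(B^\circ) \cap H = H^\circ + (\{0\}\times\kat\gamma)$, i.e.\ that passing to the full preimage of $B^\circ$ inside $H$ only adds the finite katakernel and nothing more of the connected component. This is where finite-dimensionality is genuinely used: one checks that $\pi_1^{-1}(B^\circ)\cap H$ is virtually connected with connected component equal to that of $H^\circ$ (same dimension, by the Dimension Lemma applied to $\pi_1$ restricted to each), and that modding out $\{0\}\times\kat\gamma$ then yields a \emph{connected} group isomorphic to $B^\circ$ — forcing $\pi_1^{-1}(B^\circ)\cap H = H^\circ + (\{0\}\times\kat\gamma)$. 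Once this is in place the displayed identity drops out by chasing $\pi_2$, and one should double-check the edge case where $\kat\gamma \not\le \gamma[B]^\circ$, which is exactly why the $+\,\kat\gamma$ term cannot be absorbed and the formula is stated the way it is.
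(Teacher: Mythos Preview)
Your argument is correct, but it takes a different route from the paper. The paper never introduces the auxiliary group $H=\gamma\cap(B\times A)$; instead it works directly with the induced morphism $\bar\gamma:B\to\gamma[B]/\kat\gamma$ and bounds indices: for any definable $C\le\gamma[B]$ of finite index one has $|\gamma[B]:C+\kat\gamma|\le|B:\gamma^{-1}[C]|\le|B:B^\circ|$, which gives virtual connectedness in one line, and the identity $\gamma[B^\circ]=\gamma[B]^\circ+\kat\gamma$ then follows by two short inclusions (using that $\gamma^{-1}[\gamma[B]^\circ]\ge B^\circ$). Your approach is more structural: you lift everything to $H\le A^2$, use that surjective homomorphisms of virtually connected groups send connected component to connected component, and read off both $\gamma[B^\circ]$ and $\gamma[B]^\circ$ as $\pi_2$-images. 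This is slightly heavier in bookkeeping but arguably more transparent about \emph{why} the formula has the shape it does.

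One small correction: you flag the identification $\pi_1^{-1}(B^\circ)\cap H = H^\circ + (\{0\}\times\kat\gamma)$ as the place where finite-dimensionality is ``genuinely used'', but in fact no dimension argument is needed there. Writing $K=\{0\}\times\kat\gamma$, the quotient $(\pi_1^{-1}(B^\circ)\cap H)/K$ is isomorphic to $B^\circ$ via $\pi_1$, hence connected; since $(H^\circ+K)/K$ sits inside it with finite index, the two coincide. So the step is pure connectedness, not dimension. The paper's proof likewise uses only index-chasing and connectedness, with no appeal to the Dimension Lemma.
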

\begin{proof}%\leavevmode
Let $C\le\gamma[B]$ have finite index in $\gamma[B]$. Then $\gamma^{-1}[C]$ has finite index in $B$ and contains $B^\circ$. Considering the induced endomorphism $\bar\gamma:B\to\gamma[B]/\kat\gamma$, we see that 
\begin{align*}
|\gamma[B]:C+\kat\gamma| & =|\gamma[B]/\kat\gamma:(C+\kat\gamma)/\kat\gamma|\\
& \le |B:\gamma^{-1}[C+\kat\gamma]|=|B:\gamma^{-1}[C]|\le|B:B^\circ|.
\end{align*}
So the index of any definable subgroup of finite index in $\gamma[B]$ is bounded, and $\gamma[B]$ is virtually connected.

Clearly $\kat\gamma\le\gamma[B^\circ]$, and $\gamma[B^\circ]$ has finite index in $\gamma[B]$. Thus $\gamma[B]^\circ+\kat\gamma\le\gamma[B^\circ]$. Conversely, $\gamma^{-1}[\gamma[B]^\circ]$ has finite index in $B$ and contains $B^\circ$. Hence $\gamma[B^\circ]\le\gamma[B]^\circ+\kat\gamma$, and equality holds.
\end{proof}

\begin{remarks}\label{r:connectedness}\leavevmode
\begin{itemize}
\item
In general $\ker \gamma$ need not have a connected component. Moreover, if $\delta$ commutes sharply with $\gamma$, then $\ker \gamma$ need not be weakly $\delta$-invariant (see Remark \ref{r:inverse}.). But one can prove that \emph{if} $\ker \gamma$ is virtually connected, then $(\ker \gamma)^\circ$ is weakly $\delta$-invariant.
\item
By methods similar to the proof of Lemma~\ref{l:connectedness}, one can show that if $A$ is virtually connected, %-by-finite, 
then so is every definable endogeny of $A$, as a subgroup of $A^2$. Thus if $A$ is connected, every endogeny is equivalent to a \emph{connected endogeny}. However these do not form a sub-prering of $\Endog(A)$, and $C^\sharp(\delta)$ is not closed under taking connected components. We therefore do not pursue the matter any further.
\end{itemize}
\end{remarks}

We can now define the relevant notion of minimality.
\begin{definition} Let $A$ be a virtually connected definable abelian group in a finite-dimensional theory, and $\Gamma$, $\Delta$ two invariant rings of definable endogenies of $A$. We say that $A$ is $(\Gamma,\Delta)$-minimal if there is no infinite, definable, virtually connected, weakly $\Gamma$- and $\Delta$-invariant subgroup of infinite index.\end{definition}

\begin{remark} As noted in Remark \ref{r:subcommuting}, sharp commutation is not hereditary on weakly $\Gamma$- and $\Delta$-invariant subgroups. Thus, if $A$ is a virtually connected definable abelian group in a finite-dimensional theory, and $\Gamma$, $\Delta$ two sharply commuting invariant rings of definable endogenies of $A$, then $A$ contains a $(\Gamma,\Delta)$-minimal virtually connected subgroup $B$ by finite-dimensionality, but need not contain one where the restriction-corestrictions $\rho_B(\Gamma)$ and $\rho_B(\Delta)$ commute sharply.

In particular, since $A$ is virtually connected, its connected component $A^\circ$ will be weakly $\Gamma$- and $\Delta$-invariant, but again $\rho_{A^\circ}(\Gamma)$ and $\rho_{A^\circ}(\Delta)$ might not commute sharply. For this reason we work in the virtually connected category rather than the more usual connected one.\end{remark}

\section{Proof of Theorem~A}\label{S:radicalisation}

In this section we shall prove our first main theorem. Full familiarity with the notions of Section~\ref{S:endogenies} is required. We can now give the precise statement.

\begin{theorema}
In a finite-dimensional theory, let $A$ be a definable, virtually connected, abelian group. Let $\Gamma$ and $\Delta$ be two invariant prerings of definable endogenies of $A$ such that $A$ is $(\Gamma,\Delta)$-minimal. Suppose $\Gamma$ and $\Delta$ commute sharply, both are essentially infinite, and one of them is essentially unbounded.
Then $\Kat(\Gamma,\Delta)$ is finite, and the induced action of $\Gamma$ and $\Delta$ on $A/\Kat(\Gamma,\Delta)$ is by commuting endomorphisms.\end{theorema}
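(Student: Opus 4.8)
The goal is to show that the global bi-katakernel $\Kat(\Gamma,\Delta)$ is finite; the fact that $\Gamma$ and $\Delta$ then act by commuting endomorphisms on $A/\Kat(\Gamma,\Delta)$ is exactly Lemma~\ref{l:radical}, so once finiteness is established there is nothing left to do. The plan is therefore to prove that $\Kat(\Gamma,\Delta)$ is finite, and the natural route is to exhibit $\Kat(\Gamma,\Delta)$ as a proper weakly $\Gamma$- and $\Delta$-invariant subgroup (after passing to a connected component), then invoke $(\Gamma,\Delta)$-minimality.

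\textbf{Step 1: $\Kat(\Gamma,\Delta)$ is contained in a definable virtually connected weakly invariant subgroup.} Since the prerings $\Gamma,\Delta$ are only invariant, not definable, I cannot expect $\Kat(\Gamma,\Delta)$ itself to be definable. However it is invariant, hence bounded, hence contained in $B = \defgenerated{\Kat(\Gamma,\Delta)}$, the definable hull — more precisely, by compactness/saturation, $\Kat(\Gamma)$ is the union of a bounded family of finite groups $\kat\gamma$, so there is a smallest definable subgroup containing them all, and similarly for $\Delta$. Set $B$ to be (the connected component of) this definable hull. By Lemma~\ref{l:radical}, $\Kat(\Gamma,\Delta)$ is fully, hence weakly, $\Gamma$- and $\Delta$-invariant; I then need $B$ — the definable virtually connected group it generates — to inherit weak invariance. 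This should follow because the generators $\kat\gamma$, $\kat\delta$ are permuted by the relevant automorphism group and each is sent into $\Kat(\Gamma,\Delta)\le B$, together with the Connectedness Lemma \ref{l:connectedness} to control the passage to connected components (recall $\gamma[B^\circ] = \gamma[B]^\circ + \kat\gamma$). The remark after Lemma~\ref{l:radical} will likely be the relevant tool.

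\textbf{Step 2: $B$ has finite index in $A$.} By $(\Gamma,\Delta)$-minimality, the definable virtually connected weakly invariant subgroup $B$ is either finite or of finite index. If it has finite index, then $A/B^\circ$ is finite, so $\Kat(\Gamma,\Delta) \le B$ has bounded-but-possibly-infinite size only if $B$ is infinite; that is, I still need to rule out $B$ infinite of finite index. This is where \emph{essential infinitude} and \emph{essential unboundedness} enter, and I expect this to be the main obstacle. The idea is that if $\Kat(\Gamma)$ were infinite, then $\bigcup_{\gamma\in\Gamma}\kat\gamma$ would be an infinite invariant (hence unbounded-sized) union of finite groups; combined with essential unboundedness of, say, $\Delta$, one produces elements $\gamma\in\Gamma$ with $\kat\gamma$ arbitrarily large, or a strictly increasing chain of subgroups $\kat\gamma_1 < \kat\gamma_1 + \kat\gamma_2 < \cdots$ whose union must then have infinite dimension inside $A$ — contradicting $\dim A < \infty$. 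More carefully: each $\kat\gamma$ is finite so has dimension $0$; a sum of finitely many still has dimension $0$; so $\Kat(\Gamma)$, being an increasing union of $0$-dimensional groups, can only fail to be finite if $A$ contains an infinite locally finite subgroup of dimension... this needs the finite-dimensionality axioms to bound things, and the honest argument is that $B = \defgenerated{\Kat(\Gamma,\Delta)}$ is virtually connected, so $B^\circ$ is definable connected, and if $B$ is infinite then $B^\circ$ is infinite, and one shows $B^\circ$ is then a proper infinite weakly invariant subgroup of infinite index — contradicting minimality — unless $B$ has finite index. So the real content is: \emph{why can $B$ not have finite index in $A$ while being infinite?} Here one uses that $\Kat(\Gamma,\Delta) \le B$ consists of torsion-like "error terms" and that an essentially infinite (resp.\ essentially unbounded) preringof endogenies on a virtually connected finite-dimensional group with everything trivialised mod $B$ would collapse — e.g.\ by showing $\Gamma$ acts by endomorphisms on the finite quotient $A/B^\circ$ with kernel containing all of $\Gamma$'s action, forcing $\Gamma$ to be essentially finite, contrary to hypothesis. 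I would compute $\dim B$: if $B$ is infinite then $\dim B \geq 1$; weak invariance plus minimality force $\dim B = \dim A$; and then I would derive a contradiction with essential infinitude by constructing, for each $\gamma\in\Gamma$, a corresponding endogeny modulo $B$ that must be trivial, identifying the image of $\Gamma$ in $\Endog(A/B^\circ)/{\sim}$ with a finite ring and contradicting essential infinitude.

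\textbf{Step 3: Conclude.} Once $B$ is finite, so is $\Kat(\Gamma,\Delta) \le B$, and Lemma~\ref{l:radical} finishes the proof. I expect Step~2 — specifically, ruling out the case $B$ infinite of finite index, which is exactly where the hypotheses "essentially infinite" and "essentially unbounded" must be deployed — to be the crux; the likely mechanism is a counting/dimension argument showing that an infinite global katakernel would force $\Gamma$ (or $\Delta$) to be essentially finite, or would produce a forbidden infinite weakly invariant subgroup of infinite index inside some auxiliary quotient. The bookkeeping with weak versus full invariance and with connected components (via Lemmas~\ref{l:invariance}, \ref{l:connectedness}, and the Katakernel Lemma) will need care but should be routine once the main idea is in place.
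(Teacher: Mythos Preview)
Your approach has a genuine gap at the very first move. The claim that $\Kat(\Gamma,\Delta)$ is ``invariant, hence bounded'' is false: an invariant subset of the monster model need not be small (the whole model is invariant). Since one of $\Gamma,\Delta$ is essentially \emph{unbounded}, the family $\{\kat\gamma:\gamma\in\Gamma\}$ is indexed by an unbounded set, and nothing prevents $\Kat(\Gamma)$ from being an unbounded locally finite subgroup of $A$. So there is no reason for a definable hull $B$ of $\Kat(\Gamma,\Delta)$ to be a proper subgroup; it could perfectly well be $A^\circ$ or $A$, in which case minimality tells you nothing.

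Step~2 then inherits this problem and adds another: even if you had a definable $B\ge\Kat(\Gamma,\Delta)$ of finite index, the mechanism you sketch---that $\Gamma$ would act on the finite quotient $A/B^\circ$ through a finite ring, contradicting essential infinitude---does not work. Essential infinitude of $\Gamma$ concerns inequivalence of endogenies \emph{on $A$}, and many inequivalent endogenies of $A$ can induce the same map on a finite quotient. The actual obstruction to a direct argument is that without further hypotheses (such as all non-null endogenies having finite kernel, which is exactly the \emph{local} case of Proposition~\ref{t:local}) there is no leverage to bound $\Kat(\Gamma,\Delta)$.

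The paper's proof is structurally quite different: it proceeds by induction on $\dim A$. One first enlarges $\Gamma$ to $\CC(\Delta)$; if all non-null endogenies have finite kernel, Proposition~\ref{t:local} finishes. Otherwise some $\varphi\in\Gamma$ has infinite kernel, so $\Gamma$-lines (minimal infinite $\Gamma$-images) have dimension strictly below $\dim A$. One then builds local prerings $\Gamma_L,\Delta_L$ on each line $L$, checks that the hypotheses descend (Proposition~\ref{p:GammaLDeltaL}), applies induction to get $\Kat(\Gamma_L,\Delta_L)$ finite, and finally reassembles via quasi-projections (Proposition~\ref{p:linereduction2}) to conclude $\Kat(\Gamma,\Delta)$ is finite. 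The entire machinery of Sections~\ref{s:radicalisation:lines}--\ref{s:radicalisation:globalisation} exists precisely because no direct boundedness argument is available.
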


Of course, the finite group $F$ from the introduction equals $\Kat(\Gamma,\Delta)$. The proof is by induction on dimension, reducing to a local form given in~Section~\ref{s:radicalisation:local}. The reduction itself involves subgroups of $A$ called \emph{lines} and introduced in Section~\ref{s:radicalisation:lines}.
One needs to reduce $\Gamma$ and $\Delta$ to the relevant variants $\Gamma_L, \Delta_L$ in Section~\ref{s:localendogenies}.
Returning from lines to $A$ takes place in Section~\ref{s:radicalisation:globalisation}. Then the inductive proof of Theorem~A quickly follows in Section~\ref{s:induction}.

One general notation is used throughout this section: when $B \leq A$ are (abelian) groups, we write $A \gg B$ if $A/B$ is infinite. In particular, $A \gg 0$ means that $A$ is infinite.

\subsection{Local version (finite kernels)}\label{s:radicalisation:local}

\begin{proposition}[Local version of Theorem A]\label{t:local}
Under the same hypotheses as Theorem~A, instead of assuming $(\Gamma, \Delta)$-minimality, suppose that every endogeny in $\Gamma \cup \Delta$ inequivalent to $0$ has finite kernel. Then $\Kat(\Gamma,\Delta)$ is finite.
\end{proposition}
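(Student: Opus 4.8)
The plan is to show that the orders of the finite groups $\kat\gamma$ ($\gamma\in\Gamma$) and $\kat\delta$ ($\delta\in\Delta$) are uniformly bounded. This suffices: since $\kat(\gamma_1+\gamma_2)=\kat\gamma_1+\kat\gamma_2$ with $\gamma_1+\gamma_2\in\Gamma$, and symmetrically for $\Delta$, the finite subgroups $\kat\gamma+\kat\delta$ form a directed family whose union is $\Kat(\Gamma,\Delta)$. I would in fact work with the following reformulation: any fully $\Gamma$- and $\Delta$-invariant subgroup $F$ satisfies $\kat\gamma=\gamma[0]\le\gamma[F]\le F$ and $\kat\delta\le F$, hence $\Kat(\Gamma,\Delta)\le F$; so $\Kat(\Gamma,\Delta)$ is the \emph{least} fully $\Gamma$- and $\Delta$-invariant subgroup of $A$, and it is finite as soon as \emph{some} fully $\Gamma$- and $\Delta$-invariant subgroup is finite. (We may assume $A$ infinite.)

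Next I would extract the geometric content of the local hypothesis. By the Dimension Lemma, any $\gamma\in\Gamma\cup\Delta$ with $\gamma\not\sim 0$ has $\im\gamma$ of finite index in $A$; as $A$ is virtually connected and $\im\gamma$ is definable this forces $A^\circ\le\im\gamma$, whence $\gamma[A^\circ]=A^\circ+\kat\gamma$ by the Connectedness Lemma: every non-null endogeny in $\Gamma\cup\Delta$ is an \emph{almost-automorphism} (and, incidentally, $\Gamma/{\sim}$ and $\Delta/{\sim}$ are domains, since a finite-kernelled endogeny has finite multi-fibres). Moreover, for $\gamma\in\Gamma$ and $\delta\in\Delta$, sharp commutation and Lemma~\ref{l:invariance}.\ref{i:cokerinvariance:reformulation} give that $\kat\gamma$ is weakly $\Delta$-invariant and $\kat\delta$ is weakly $\Gamma$-invariant; combined with the full $\Gamma$-invariance of $\Kat(\Gamma)$ (Lemma~\ref{l:radical}) this shows $\kat\delta+\Kat(\Gamma)$ is \emph{fully} $\Gamma$-invariant for every $\delta$. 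So once $\Kat(\Gamma)$ is known finite, one may pass to $A/\Kat(\Gamma)$ — on which $\Gamma$ acts by genuine morphisms, still essentially infinitely, with sharp commutation preserved and each $\kat\bar\delta$ a finite fully $\Gamma$-invariant subgroup — and the problem reduces to bounding the orders of such subgroups.

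The crux, and the step I expect to be genuinely hard, is to rule out \emph{arbitrarily large} finite subgroups weakly (or, after the reduction just described, fully) invariant under an essentially unbounded — or in the reduced situation merely essentially infinite — prering of almost-automorphisms. Concretely: were $\Kat(\Gamma,\Delta)$ infinite, directedness would yield $\varepsilon_n\in\Gamma\cup\Delta$ with $|\kat\varepsilon_n|\to\infty$, all in one ring after a subsequence, giving finite subgroups $F_n$ of unbounded order each invariant (weakly, then fully as above) under the opposite ring. To contradict this I would run a finite-dimensionality and compactness argument within the ambient definable structure of endogenies, exploiting essential unboundedness of the ring in question to realise, in the monster model, a definable endogeny whose katakernel absorbs all the $F_n$ — impossible, since katakernels are finite by definition. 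This is exactly where essential unboundedness is indispensable: without it the prering generated inside $(\bK,+)\models\ACF_p$ by the inverse of the Artin--Schreier endogeny is essentially infinite, has every non-null element finite-kernelled, yet has infinite global katakernel. The remaining delicacy is that $\Gamma$ and $\Delta$ are only invariant, not definable, so the compactness argument must be set up over a suitable small parameter set.
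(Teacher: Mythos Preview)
Your plan and the paper's share the same skeleton --- show that the finite groups $\kat\delta$ (say) cannot grow without bound, using that they are weakly $\Gamma$-invariant and that one of the rings is essentially unbounded --- and your Artin--Schreier remark correctly pinpoints why unboundedness is indispensable. But the mechanism you propose for the crux does not work.

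You want to ``realise, in the monster model, a definable endogeny whose katakernel absorbs all the $F_n$'' and derive a contradiction from the definitional finiteness of katakernels. Compactness applied to $\varepsilon_n$ with $|\kat\varepsilon_n|\to\infty$ would at best produce a definable additive \emph{relation} with infinite fibre over~$0$; such a relation is simply not an endogeny, and there is no reason it should lie in $\Gamma$ or $\Delta$, which are only \emph{invariant} (unions of types over a small parameter set), not type-definable or closed under limits. So nothing is contradicted. Note too that essential unboundedness is a property of $\Gamma/{\sim}$ or $\Delta/{\sim}$, not of a uniformly definable family, so there is no ready-made compactness statement to invoke.

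The paper's argument is a cardinality pigeonhole, and the contradiction is with the finite-\emph{kernel} hypothesis, not with katakernel finiteness. If $\Gamma$ is essentially unbounded and $\Kat(\Delta)$ were infinite, take a countably infinite subsum $B\le\Kat(\Delta)$; it is weakly $\Gamma$-invariant (as a directed union of such), so restriction--corestriction $\rho_B:\Gamma\to\Endog(B)$ is defined. But $|\Endog(B)|\le 2^{\aleph_0}$ is bounded while $\Gamma$ has unboundedly many ${\sim}$-classes, so two inequivalent $\gamma_1,\gamma_2$ satisfy $\rho_B(\gamma_1)=\rho_B(\gamma_2)$, whence $B\le\ker(\gamma_1-\gamma_2)$: an infinite kernel for a non-null element of $\Gamma$. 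This handles the case where $\Gamma$ is the unbounded ring. The case where $\Gamma$ is merely essentially infinite (so $\Delta$ is the unbounded one) requires a separate short argument that your proposal does not supply concretely: one first gets $\Kat(\Gamma)$ finite by swapping roles, deduces that $\Gamma$ itself (not just $\Gamma/{\sim}$) is bounded, and then uses essential infinitude of $\Gamma$ to force each $\kat\delta$ into some $\ker\gamma$ with $\gamma\not\sim0$, so that $\Kat(\Delta)$ is bounded and the pigeonhole finishes. Your paragraph~2 reduction to $A/\Kat(\Gamma)$ presupposes precisely this step.
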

\begin{proof}\leavevmode
First suppose that $\Gamma$ is essentially unbounded. If the sum giving $\Kat(\Delta)$ is infinite, we can take a subsum $B$ which is countably infinite. Note that it is weakly $\Gamma$-invariant.
Of course $\Endog(B)$ is bounded.
By essential unboundedness, there are inequivalent $\gamma_1, \gamma_2 \in \Gamma$ restricting to the same $\rho_B(\gamma_i)$, meaning that $B \leq \ker (\gamma_1 - \gamma_2)$, a contradiction to kernel finiteness. Therefore $\Kat(\Delta)$ is finite.

Now suppose that $\Gamma$ is essentially bounded. By assumption, $\Delta$ is not; so by the above, $\Kat(\Gamma)$ is finite. Now $\kat\gamma \le \Kat(\Gamma)$ for each $\gamma \in \Gamma$, so there are only finitely many possible katakernels for elements of $\Gamma$. In particular, there are finitely many $\gamma' \in \Gamma$ equivalent to each $\gamma$.
Thus equivalence classes are finite, and there are only boundedly many of them by essential boundedness.
This proves that $\Gamma$ is bounded, as a subset of $\Endog(A)$.

It is however essentially infinite. So for every $\delta\in\Delta$ there is $\gamma \in \Gamma$ inequivalent to $0$ which annihilates the finite weakly $\Gamma$-invariant group $\kat \delta$, viz., $\kat \delta \leq \ker \gamma$. This proves that $\Kat(\Delta)\leq \sum \{\ker \gamma: \gamma \in \Gamma\wedge \gamma \not\sim 0\}$. But the latter is bounded, and so is $\Kat(\Delta)$, which is fully $\Delta$-invariant. Since $\Delta$ is essentially unbounded, the argument from the first case implies finiteness of $\Kat(\Delta)$.

By symmetry, $\Kat(\Gamma)$ is finite, as is $\Kat(\Gamma,\Delta)$.
\end{proof}

\subsection{Lines}\label{s:radicalisation:lines}

The proof of Theorem~A starts here; $A$, $\Gamma$ and $\Delta$ are as in the statement until the end of Section~\ref{S:radicalisation}.

The idea behind the reduction is simple: we consider minimal infinite $\Gamma$-images. Formally:

\begin{definition}
A \emph{line} is a $\Gamma$-image $\gamma[A]$ which is infinite and minimal with respect to inclusion,
% A $\Gamma$-image $\gamma[A]$ is a {\em line} if it is minimal infinite,
i.e.~for all $\gamma'\in\Gamma$, if $\gamma'[A]\le\gamma[A]$ is infinite, then $\gamma'[A]=\gamma[A]$.
We denote the set of lines by $\Lambda$.
\end{definition}

By finite-dimensionality and virtual connectedness, lines do exist (also see~\ref{i:Lexistence} in the following proposition).

\begin{proposition}\label{p:lines}\leavevmode
\begin{enumerate}%[label=(\arabic*)]%], series=claims]
\item\label{i:Linvariance}  Any line is weakly $\Delta$-invariant and virtually connected.
\item\label{i:Lexistence}
If $\gamma[A] \gg 0$, then there is a line $\gamma'[A]\le\gamma[A]$.
\item\label{i:Asumoflines} Let $L$ be a line. Then
$A^\circ$ is contained in a finite sum of $\Gamma$-images of $L$.
\item\label{i:transitive}
% The action of $\Gamma$ on $\Lambda$ is transitive. Better: 
If $L$ and $L'= \gamma'[A]$ are lines, then there is $\gamma^* \in \Gamma$ such that $L'=\gamma'\gamma^*[L]$.
\item\label{i:finitekernel} All lines have the same dimension. If $L$ is a line and  $\gamma[L] \gg 0$ for some $\gamma\in\Gamma$, then $L\cap\ker\gamma$ is finite.
\item\label{i:setidempotent} If $L$ is a line, there is $\gamma \in \Gamma$ such that $\gamma[A] = \gamma[L]=L$.
Then $\check\gamma = \rho_{L}(\gamma)$ is surjective with finite kernel.
\end{enumerate}
Items~\ref{i:Linvariance}.\ and \ref{i:Lexistence}.\ do \emph{not} need $(\Gamma, \Delta)$-minimality of $A$.
\end{proposition}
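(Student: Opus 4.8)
The plan is to prove the six items more or less in the stated order, using the Invariance Lemma, the Connectedness Lemma, and finite-dimensionality, with $(\Gamma,\Delta)$-minimality only entering from~\ref{i:Asumoflines}.\ onward.

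\emph{Items~\ref{i:Linvariance}.\ and~\ref{i:Lexistence}.} Let $L=\gamma[A]$ be a line. Virtual connectedness of $L$ is immediate from the Connectedness Lemma applied to $\gamma$ and $B=A$ (using that $A$ is virtually connected). For weak $\Delta$-invariance, fix $\delta\in\Delta$; then $\delta[L]$ is a $\Gamma$-set image? — more precisely, by the Invariance Lemma~\ref{i:gammaBdeltainvariant}.\ (with $B=L$, which is trivially weakly $\gamma$-invariant since $\gamma[L]\le\gamma[A]=L$) the subgroup $\delta[L]$ is weakly $\gamma$-invariant, hence so is $\delta[L]\cap L$; but I want to compare with $L+\kat\delta$. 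The cleaner route: $\delta\gamma[A]\le\gamma\delta[A]+\kat\gamma+\kat\delta=\gamma[A]+\kat\delta$ by sharp commutation and globality of $\delta$, which is exactly $\delta[L]\le L+\kat\delta$. For~\ref{i:Lexistence}., among the infinite $\Gamma$-images contained in $\gamma[A]$ pick one of least dimension, and among those of that dimension one minimal under inclusion; finite-dimensionality guarantees the dimension is well-defined and the descent terminates, and any strictly smaller infinite $\Gamma$-image would contradict minimality, so the chosen image is a line.

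\emph{Items~\ref{i:transitive}.,~\ref{i:finitekernel}.,~\ref{i:setidempotent}.\ and then~\ref{i:Asumoflines}.} For~\ref{i:transitive}., given lines $L$ and $L'=\gamma'[A]$, write $L=\gamma[A]$; the composite $\gamma'\gamma\in\Gamma$ has image $\gamma'[\gamma[A]]\le\gamma'[A]=L'$, and it is infinite — here I expect the main obstacle, namely ruling out that $\gamma'\gamma[A]$ is finite. This is where one uses that $\gamma$ can be chosen to act invertibly up to finite kernel: by~\ref{i:setidempotent}.\ there is $\gamma_0\in\Gamma$ with $\gamma_0[A]=\gamma_0[L]=L$ and $\rho_L(\gamma_0)$ surjective with finite kernel, so $\gamma'\gamma_0[A]=\gamma'[L]$, and if this were finite then $\ker\gamma'$ would meet $L$ in a finite-index subgroup, so $\ker\gamma'$ is infinite, so $\im\gamma'=L'$ is finite by the Dimension Lemma, contradicting that $L'$ is a line; thus $\gamma'\gamma_0[A]=\gamma'[L]$ is infinite and contained in the line $L'$, hence equals $L'$, and $\gamma^*:=\gamma_0$ works. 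Proving~\ref{i:setidempotent}.\ itself is the real crux: one needs an idempotent-up-to-finite behaviour. Start from any $\gamma_1$ with $\gamma_1[A]=L$; then $\gamma_1[L]\le L$ is a $\Gamma$-image, so either it is infinite, hence $=L$ by minimality, and we are (almost) done, or it is finite. Iterating, the chain $L\ge\gamma_1[L]\ge\gamma_1^2[L]\ge\cdots$ cannot strictly descend in dimension forever, so it stabilises at some infinite $\Gamma$-image, which by minimality is $L$; one then takes $\gamma=\gamma_1^k$ for the appropriate $k$, checks $\gamma[A]\le\gamma_1[A]=L$ hence $\gamma[A]\le L$ with $\gamma[L]=L$ forcing $\gamma[A]=L$, and applies the Restriction Lemma to $B=L$ (weakly $\gamma$-invariant since $\gamma[L]=L$): $\check\gamma=\rho_L(\gamma)$ is an endogeny of $L$, surjective because $\gamma[L]=L\le L+\kat\gamma$ forces $\check\gamma[L]=L$, and with finite kernel because $\ker\check\gamma\le\ker\gamma\cap L$ and the Dimension Lemma on $\check\gamma$ gives $\dim\ker\check\gamma=\dim L-\dim\im\check\gamma=0$. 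For~\ref{i:finitekernel}., all lines have equal dimension by~\ref{i:transitive}.\ together with the Dimension Lemma (a surjection-up-to-finite-kernel between $L$ and $L'$ preserves dimension), and the stated kernel-finiteness is again the Dimension Lemma: $\gamma[L]\gg 0$ with $\gamma[L]\le L$ and $\dim L$ minimal among infinite $\Gamma$-images forces $\gamma[L]$ to have full dimension in $L$... more carefully, $\gamma[L]$ contains a line $L''\le L$ by~\ref{i:Lexistence}., $\dim L''=\dim L$, so $\dim\gamma[L]=\dim L$, so $\dim(L\cap\ker\gamma)=\dim\ker\rho_L(\gamma)=0$. Finally~\ref{i:Asumoflines}.: let $S=\sum\{\gamma[L]:\gamma\in\Gamma\}$; it is weakly $\Gamma$-invariant (a sum of $\Gamma$-images is weakly $\Gamma$-invariant by the Invariance Lemma, since each $\gamma[L]$ is weakly $\gamma'$-invariant for all $\gamma'$ — use~\ref{i:gammaBdeltainvariant}.\ within $\Gamma$, or directly $\gamma'\gamma[L]\le\gamma'[A]$ is a $\Gamma$-image hence $\le S$) and weakly $\Delta$-invariant (each $\gamma[L]$ is a line, hence weakly $\Delta$-invariant by~\ref{i:Linvariance}., and sums of such are too). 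Its connected component $S^\circ$ is infinite, definable, virtually connected, and weakly $\Gamma$- and $\Delta$-invariant, so by $(\Gamma,\Delta)$-minimality $S^\circ$ has finite index in $A$, i.e.\ $S^\circ\ge A^\circ$; but $S^\circ$ is generated by finitely many of the $\gamma[L]$ (a dimension/saturation argument: an ascending chain of finite subsums of definable virtually connected subgroups of bounded index must stabilise), so $A^\circ\le S^\circ\le$ a finite sum of $\Gamma$-images of $L$.

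\emph{Main obstacle.} The genuinely delicate point is~\ref{i:setidempotent}., getting a single $\gamma\in\Gamma$ that is "idempotent up to finite kernel" on the line $L$; everything downstream (\ref{i:transitive}., \ref{i:finitekernel}., and the finite-sum conclusion of~\ref{i:Asumoflines}.) leans on it. The subtlety is that composition need not be well-behaved with respect to katakernels, so one must argue purely at the level of $\Gamma$-images and dimensions, using minimality of $L$ to collapse the descending chain $\gamma_1^k[L]$ and then the Restriction Lemma to pass to $\rho_L(\gamma)$. A secondary nuisance is that weak invariance is not preserved under intersection, so in~\ref{i:Asumoflines}.\ one must build $S$ as a sum (where weak invariance is preserved) rather than intersecting anything.
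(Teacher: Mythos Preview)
Your reordering of the items is the source of the trouble. The paper proves item~\ref{i:Asumoflines} \emph{first} among~\ref{i:Asumoflines}--\ref{i:setidempotent}, and this is where $(\Gamma,\Delta)$-minimality enters; items~\ref{i:transitive}, \ref{i:finitekernel} and~\ref{i:setidempotent} then follow in that order, each using its predecessor. You try to prove~\ref{i:setidempotent} and~\ref{i:transitive} before invoking minimality, and both arguments break.

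Your proof of item~\ref{i:setidempotent} fails: if $\gamma_1[A]=L$ but $\gamma_1[L]$ is finite, then $\gamma_1^k[L]$ is finite for every $k\ge1$, so the chain never stabilises at an infinite $\Gamma$-image and no power of $\gamma_1$ works. Without minimality this can happen: take $A=L\oplus M$ a $2$-dimensional $\bQ$-space, $\gamma_1$ killing $L$ and mapping $M$ isomorphically onto $L$, $\Gamma=\bQ[\gamma_1]$, $\Delta=\bQ$; then $L$ is a line, $\gamma_1^2=0$, and no $\gamma\in\Gamma$ has $\gamma[A]=\gamma[L]=L$. (Of course $A$ is not $(\Gamma,\Delta)$-minimal here, since $L$ itself is invariant.)

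Your argument for item~\ref{i:transitive} has a separate error: from ``$\ker\gamma'$ is infinite'' the Dimension Lemma gives only $\dim(\im\gamma')\le\dim A-1$, not that $\im\gamma'=L'$ is finite. And invoking item~\ref{i:setidempotent} to obtain $\gamma_0$ with $\gamma_0[A]=L$ buys nothing anyway, since $\gamma'\gamma_0[A]=\gamma'[L]$ regardless of any idempotence of $\gamma_0$ on $L$.

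The paper's route: prove item~\ref{i:Asumoflines} first by setting $A'=\sum_{\gamma\in\Gamma}\gamma[L]^\circ$ (taking connected components makes $A'$ connected, hence a finite sum by dimension), checking weak $(\Gamma,\Delta)$-invariance, and applying minimality to get $A^\circ\le A'$. Then item~\ref{i:transitive}: since $A^\circ\le\sum_{i<n}\gamma_i[L]$ and $\gamma'[A^\circ]$ is infinite, some $\gamma'\gamma_i[L]$ must be infinite, hence equal to $L'$ by minimality of $L'$. Item~\ref{i:finitekernel} follows from~\ref{i:transitive} applied in both directions plus dimension; item~\ref{i:setidempotent} is just~\ref{i:transitive} with $L=L'$, setting $\gamma:=\gamma'\gamma^*$ so that $L=\gamma[L]\le\gamma[A]\le\gamma'[A]=L$.

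Your sketch for item~\ref{i:Asumoflines}, moved to its proper place, is essentially right; the only wrinkle is that $S=\sum_\gamma\gamma[L]$ is an a priori infinite sum of merely virtually connected groups, so its definability needs care. Passing to connected components $\gamma[L]^\circ$ as the paper does makes the sum connected and the finiteness automatic.
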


\begin{proof}\leavevmode\begin{enumerate}%[label={\itshape (\arabic*)}]%, series=proofs]
\item This holds for all $\Gamma$-images by Lemmas \ref{l:invariance} and \ref{l:connectedness}.
Note that the connected component of a line might not be a line itself since it need not be a $\Gamma$-image of $A$.
\item Since $\Gamma$-images are virtually connected, any proper subimage has either smaller dimension or smaller index over the common connected component. We hence have a descending chain condition on $\Gamma$-images, and any infinite one contains a minimal infinite $\Gamma$-image, i.e.\ a line.
\item By Lemma~\ref{l:connectedness}., for all $\gamma\in\Gamma$ the image $\gamma[L]$ is virtually connected, and $\gamma[L]^\circ$ is weakly $\Delta$-invariant. Then $A' = \sum_{\gamma\in\Gamma} \gamma[L]^\circ \gg 0$ is well-defined, definable, connected, and weakly $\Delta$-invariant; moreover it is a finite sum $A' = \sum_{i<n}\gamma_i[L]^\circ$.

We contend that $A'$ is weakly $\Gamma$-invariant.
Consider $\gamma\in \Gamma$. Then
\[\gamma\gamma_i[L]^\circ\le \gamma[\gamma_i[L]^\circ] \leq \gamma\gamma_i[L],\]
so by Lemma~\ref{l:connectedness} one gets $\gamma[\gamma_i[L]^\circ] \leq (\gamma\gamma_i[L])^\circ + \kat \gamma \leq A' + \kat \gamma$.
%If $\gamma\gamma_i[L]$ is not a line, it is finite by~\ref{i:finitekernel}.; in either case, $\gamma\gamma_i[L]^\circ \leq A'$. So $\gamma[\gamma_i[L]^\circ] \leq A' + \kat \gamma$.
Summing over $i<n$, we find $\gamma[A'] \leq A' + \kat \gamma$.

By $(\Gamma, \Delta)$-minimality of $A$, we have $A^\circ \le A' \le \sum_{i<n} \gamma_i[L]$, as required. 
\item
Let $L, L'$ % \in \Lambda$ be given
be lines, with $L=\gamma[A]$ and $L' = \gamma'[A]$, and consider $\gamma_i\in\Gamma$ for $i<n$ with $A^\circ\le\sum_{i<n}\gamma_i[L]^\circ$ as in \ref{i:Asumoflines}. Then there is $i<n$ such that $\gamma'\gamma_i [L] \gg 0$. So 
$$0 \ll \gamma'\gamma_i\gamma[A]=\gamma'\gamma_i[L] \leq \gamma'[A]=L'.$$
By minimality, we have equality and $\gamma'\gamma_i[L]=L'$.
\item If $L$, $L'$ are lines and $\gamma'\in\Gamma$ with $L'=\gamma'[L]$, then $\dim L'\le\dim L$. By symmetry%and transitivity of the action
, $\dim L\le\dim L'$ and we have equality. But then, for any $\gamma\in\Gamma$ such that $\gamma[L]$ is infinite, $\gamma[L]$ contains a line $L''$, so
$$\dim L=\dim\gamma[L]+\dim(L\cap\ker\gamma)\ge\dim L''+\dim(L\cap\ker\gamma)=\dim L+\dim(L\cap\ker\gamma).$$
It follows that $\dim(L\cap\ker\gamma)=0$ and $L\cap\ker\gamma$ is finite. Note that $\gamma[L]$ is a finite extension of $L''$, but need not be a line itself.
\item
We apply \ref{i:transitive}.\ with $L = L'=\gamma'[A]$. So there is $\gamma^*\in\Gamma$ with $L=\gamma'\gamma^*[L]$. Put $\gamma=\gamma'\gamma^*$. Then
$$L=\gamma[L]\le\gamma[A]=\gamma'\gamma^*[A]\le\gamma'[A]=L.$$
Thus equality holds. The rest follows from \ref{i:finitekernel}.
\qedhere
\end{enumerate}
\end{proof}

\subsection{Local endogenies}\label{s:localendogenies}

Reduction to lines involves finding the suitable local analogues of $\Gamma$ and $\Delta$. We want two prerings sharply centralising each other, each essentially as large as its global analogue (where \emph{large} means either infinite or unbounded).

\begin{definition}
Let $L \in \Lambda$ be a line, and recall that for an endogeny $\varphi \in \Endog(A)$ leaving $L$ weakly invariant, $\check\varphi = \rho_L(\varphi)$ is its restriction-corestriction to $L$. We put
$$\Gamma_L = \langle\check\gamma: \gamma \in \Gamma\wedge \im \gamma \leq L\rangle \le \Endog(L)\quad\mbox{and}\quad\Delta_L = \langle\check\delta: \delta \in \Delta\rangle \le \Endog(L).$$
\end{definition}

\begin{proposition}\label{p:GammaLDeltaL}
Let $L \in \Lambda$ be any line.
\begin{enumerate}%[label=(\arabic*)]
\item\label{i:equalequivalent} Any $\gamma'\in\Gamma_L$ is equal to $\check\gamma$ for some $\gamma\in \Gamma$ with $\im\gamma\le L$, and any $\delta'\in\Delta_L$ is equivalent to $\check\delta$ for some $\delta\in\Delta$.
\item\label{i:GammaLDeltaLcommute}
$\Gamma_L$ and $\Delta_L$ commute sharply.
\item\label{i:lineminimal}
$L$ is $(\Gamma_L, \Delta_L)$-minimal.
\item\label{i:Gammalarge}
$\Gamma_L$ is (at least) as essentially large as $\Gamma$, and $\Delta_L$ is (at least) as essentially large as $\Delta$. 
\end{enumerate}
Items~\ref{i:equalequivalent}.\ and \ref{i:GammaLDeltaLcommute}.\ do \emph{not} use $(\Gamma, \Delta)$-minimality.
\end{proposition}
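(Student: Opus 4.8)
The plan is to treat the four items in the given order, exploiting the results of Proposition~\ref{p:lines} at each turn. For~\ref{i:equalequivalent}, the point is that $\Gamma_L$ is generated by elements of the special shape $\check\gamma$ with $\im\gamma\le L$, and that such $\gamma$ are already closed under the prering operations: if $\im\gamma_1,\im\gamma_2\le L$ then $\im(\gamma_1+\gamma_2)\le L$ and $\im(\gamma_1\gamma_2)\le\im\gamma_1\le L$, while $\check{(\cdot)}$ respects $+$ and $\circ$ \emph{on the nose} here because $L$ is fully invariant under these particular $\gamma$ (their images, hence their katakernels, lie in $L$). So every $\gamma'\in\Gamma_L$ is literally $\check\gamma$ for a single $\gamma\in\Gamma$ with $\im\gamma\le L$. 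For $\Delta_L$ one only gets equality up to equivalence, since $\check{(\cdot)}$ is merely additive and multiplicative modulo $\sim$ (Restriction Lemma and Remark~\ref{r:subcommuting}), so an element of $\Delta_L$ is a finite $+/\circ$-combination of $\check\delta$'s, which by the Corollary to Lemma~\ref{l:almostaring} is $\sim$ the corresponding combination $\check\delta$ with $\delta\in\Delta$.

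For~\ref{i:GammaLDeltaLcommute}: a generator of $\Gamma_L$ is $\check\gamma$ with $\im\gamma\le L$, and a generator of $\Delta_L$ is $\check\delta$. On $L$ one computes $\check\gamma\check\delta$ and $\check\delta\check\gamma$ as restriction-corestrictions of $\gamma\delta$ and $\delta\gamma$ respectively — here one must check that passing to $L$ commutes with composition up to the allowed error, using that $\kat\gamma\le\im\gamma\le L$ and that $\kat\delta$, $\delta[\kat\gamma]$ are controlled by sharp commutation and Lemma~\ref{l:cokerinvariant}.\ref{i:cokerinvariance}. Then $\im(\check\gamma\check\delta-\check\delta\check\gamma)\le\im(\gamma\delta-\delta\gamma)\cap(\text{something})\le(\kat\gamma+\kat\delta)\cap L$, and since $\kat\gamma\le L$ this is $\le\kat\gamma+(\kat\delta\cap L)=\kat\check\gamma+\kat\check\delta$. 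One then propagates sharp commutation from generators to the whole prerings via Lemma~\ref{l:cokerinvariant}.\ref{i:commutationprering}: each generator of $\Gamma_L$ lies in $\CC(\check\delta)$ for every generating $\check\delta$, hence in $\CC(\delta')$ for all $\delta'\in\Delta_L$ since $\CC(\cdot)$ is a prering, and then symmetrically.

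For~\ref{i:lineminimal}: suppose $B\le L$ is infinite, definable, virtually connected, weakly $\Gamma_L$- and $\Delta_L$-invariant. I would push $B$ back up to $A$ by forming $B' = \sum_{\gamma\in\Gamma}\gamma[B]^\circ$ (a finite sum, connected, weakly $\Delta$-invariant by the Invariance and Connectedness Lemmas) and showing $B'$ is weakly $\Gamma$-invariant by the same telescoping argument as in Proposition~\ref{p:lines}.\ref{i:Asumoflines}; here weak $\Gamma_L$-invariance of $B$ inside $L$ is what feeds the computation for those $\gamma$ with $\im\gamma\le L$, and transitivity (Proposition~\ref{p:lines}.\ref{i:transitive}) lets one reach all of $\sum\gamma[L]\supseteq A^\circ$. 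By $(\Gamma,\Delta)$-minimality of $A$, $B'$ has finite index in $A$, so $\dim B'=\dim A$; but $B'$ is covered by finitely many $\gamma[B]$ with $\dim\gamma[B]\le\dim B\le\dim L$, forcing $\dim L=\dim B$ (using that all lines have equal dimension, Proposition~\ref{p:lines}.\ref{i:finitekernel}), whence $B$ has finite index in $L$. Finally~\ref{i:Gammalarge}: by~\ref{i:equalequivalent} the map $\gamma\mapsto\check\gamma$ sends $\{\gamma\in\Gamma:\im\gamma\le L\}$ onto a generating set of $\Gamma_L$; I would argue that if $\check\gamma_1\sim\check\gamma_2$ in $\Endog(L)$ then, restricting to $\gamma$'s whose images lie in the single line $L$ and using Proposition~\ref{p:lines}.\ref{i:finitekernel} to control kernels, $\gamma_1$ and $\gamma_2$ cannot be too far apart — more precisely, an infinite (resp.\ unbounded) family of pairwise inequivalent elements of $\{\gamma:\im\gamma\le L\}$ produces an infinite (resp.\ unbounded) family of pairwise inequivalent $\check\gamma$, by combining $\gamma$ with a fixed $\gamma_0$ as in Proposition~\ref{p:lines}.\ref{i:setidempotent} so that $\im(\gamma\gamma_0)\le L$ while essential size is preserved. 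The corresponding statement for $\Delta_L$ is the same with $\sim$ in place of $=$.

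The main obstacle I expect is~\ref{i:Gammalarge}, specifically controlling essential size under the passage $\gamma\mapsto\check\gamma$: one must ensure that one does not collapse an infinite (or unbounded) family of inequivalent endogenies of $A$ into a bounded family on $L$, and the subtlety is that $\check{(\cdot)}$ is only a prering map modulo $\sim$, so equivalence on $L$ need not pull back to equivalence on $A$. The fix is to first conjugate the relevant $\gamma\in\Gamma$ into the sub-prering $\{\gamma:\im\gamma\le L\}$ using transitivity of lines, then to show injectivity-up-to-$\sim$ on \emph{that} sub-prering using that all the relevant kernels meet $L$ finitely.
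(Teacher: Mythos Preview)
Your treatment of items~\ref{i:equalequivalent} and~\ref{i:GammaLDeltaLcommute} is correct and matches the paper's argument (including the modularity step $(\kat\gamma+\kat\delta)\cap L=\kat\gamma+(\kat\delta\cap L)$ and the propagation via Lemma~\ref{l:cokerinvariant}.\ref{i:commutationprering}).

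For item~\ref{i:lineminimal}, your construction of $B'=\sum_\gamma\gamma[B]^\circ$ is fine, and arguably cleaner than the paper's $\sum_i\gamma_i\delta_i[B]^\circ$, since weak $\Delta_L$-invariance of $B\le L$ already gives weak $\Delta$-invariance of $B$ in $A$. But your conclusion step is a non-sequitur: ``$B'$ is covered by finitely many $\gamma[B]$ with $\dim\gamma[B]\le\dim B$, forcing $\dim L=\dim B$'' does not follow, because a finite \emph{sum} of groups of dimension $\le d$ may well have dimension $>d$. The missing move is to push $B'$ back into $L$: since $B'$ has finite index in $A$ and $L=\gamma_0[A]$, the image $\gamma_0[B']$ has finite index in $L$; but $\gamma_0[B']\le\sum_i\gamma_0\gamma_i[B]=\sum_i\rho_L(\gamma_0\gamma_i)[B]$, which lies in $B$ plus a finite group by weak $\Gamma_L$-invariance of $B$. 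This is exactly what the paper does.

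For item~\ref{i:Gammalarge}, the $\Delta_L$ half is as you suggest (show $\check\delta\sim 0\Rightarrow\delta\sim 0$ using $A^\circ\le\sum\gamma_i[L]$ and sharp commutation), and the paper does just that. But your plan for $\Gamma_L$ has two real gaps. First, you propose to compose with a fixed $\gamma_0$ to land in $\{\gamma:\im\gamma\le L\}$ ``while essential size is preserved''; but left-multiplication by $\gamma_0$ can annihilate --- $\gamma_0\gamma\sim 0$ whenever $\gamma[A]^\circ\le\ker\gamma_0$ --- and nothing you say prevents this from collapsing an essentially large family. The paper resolves this by first choosing finitely many $\gamma_1,\ldots,\gamma_n$ with $\gamma_i[A]$ lines and $\bigcap_i\ker\gamma_i$ of \emph{minimal} dimension, then arguing that some $\gamma_i\Gamma$ is essentially large (else a common $\varphi$ with all $\gamma_i\varphi\sim 0$ would have $\varphi[A]^\circ\le\bigcap\ker\gamma_i$, and a line inside $\varphi[A]$ contradicts minimality), and only then transferring to $\gamma_0\Gamma$ via transitivity. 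Second, your claimed ``injectivity-up-to-$\sim$'' of $\check{(\cdot)}$ on $\{\gamma:\im\gamma\le L\}$ is not immediate: $\check\gamma\sim 0$ only says $\gamma[L]$ is finite, not $\gamma[A]$. The paper handles this with a Ramsey/Erd\H{o}s--Rado step over the decomposition $A^\circ\le\sum_i\gamma_i[L]$, extracting from a large family $\{\varphi_j\}\subseteq\gamma_0\Gamma$ a large subfamily and a common $i$ with $(\varphi_j-\varphi_{j'})\gamma_i[L]\gg 0$ for all $j\ne j'$; then the $\rho_L(\varphi_j\gamma_i)$ are pairwise inequivalent in $\Gamma_L$.
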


\begin{proof}
Say $L = \gamma_0[A]$.
\begin{enumerate}%[label=(\arabic*)]
% \item Follows from \ref{i:finitekernel}.
\item If $\gamma_1,\gamma_2\in\Gamma$ with $\im\gamma_1\le L$ and $\im\gamma_2\le L$, then $\rho_L(\gamma_1\pm\gamma_2)=\rho_L(\gamma_1)\pm\rho_L(\gamma_2)$ and $\rho_L(\gamma_1\circ\gamma_2)=\rho_L(\gamma_1)\circ\rho_L(\gamma_2)$.

Since $L$ is weakly $\Delta$-invariant, $\rho_L(\delta_1\pm\delta_2)$ is equivalent to $\rho_L(\delta_1)\pm\rho_L(\delta_2)$, and $\rho_L(\delta_1\circ\delta_2)$ is equivalent to $\rho_L(\delta_1)\circ\rho_L(\delta_2)$, for any $\delta_1,\delta_2\in\Delta$.
\item
%Clearly $\check\Delta_L \subseteq \CC(\CC(\check\Delta_L)) = \Delta_L$.
For $\gamma \in\Gamma$ with $\im \gamma \leq L$, one has $\kat \gamma = \kat \check\gamma \leq L$, so for $\delta \in \Delta$ one finds:
$$%\begin{align*}
(\check\gamma \check\delta - \check\delta\check\gamma)[L] \leq 
L\cap(\gamma\delta-\delta\gamma)[A]\le
L \cap (\kat \gamma + \kat \delta)
= \kat \gamma + (L \cap \kat \delta)
= \kat\check\gamma + \kat\check\delta.
$$%\end{align*}
Therefore $\check\gamma$ commutes sharply with $\check\delta$ for all $\delta\in\Delta$. By Lemma \ref{l:cokerinvariant}.\ref{i:commutationprering}, it commutes sharply with $\Delta_L$, and then $\Delta_L$ commutes sharply with $\Gamma_L$.
%But sharp centralisers are sub-prerings by Lemma~\ref{l:almostaring}, so $\check\gamma$ commutes sharply with all of $\Delta_L$.
%\item
%The second is by construction. Now by~\ref{i:GammaLDeltaLcommute}, $\check\Delta_L \subseteq \Delta_L$, so $\Gamma_L \leq \CC(\Delta_L) \leq \CC(\check\Delta_L) = \Gamma_L$.
\item
%We even prove $(\check\Gamma_L, \check\Delta_L)$-minimality. By~\ref{i:GammaLDeltaLcommute}, this will suffice.
%
Let $0 \ll B\le L$ be virtually connected and definable. By finite-dimensionality, there are finitely many $\gamma_i\in\Gamma$ and $\delta_i\in\Delta$ such that $S = \sum_i\gamma_i\delta_i[B]^\circ \gg 0$ has maximal dimension possible. Then for any $\varphi\in\Gamma\cup\Delta$ one has $\varphi[S]^\circ\le S$, so $\varphi[S]\le S + \kat\varphi$ by Lemma~\ref{l:connectedness}, and $S$ is weakly $(\Gamma, \Delta)$-invariant. Hence $S$ has finite index in $A$ by $(\Gamma, \Delta)$-minimality of $A$. But then 
\[\gamma_0[S] \leq \sum_{i<n}\gamma_0\gamma_i\delta_i[B] = 
\sum_{i<n}\rho_L(\gamma_0\gamma_i)\check\delta_i[B] + \sum_{i<n}\gamma_0\gamma_i[\kat\delta_i]\]
has finite index in $\gamma_0[A]=L$, as does $\sum_i\rho_L(\gamma_0\gamma_i)\check\delta_i[B]$.
Now if $B$ is weakly $(\Gamma_L,\Delta_L)$-invariant, then it has finite index in $L$, as $\rho_L(\gamma_0\gamma_i)\in\Gamma_L$ and $\check\delta_i\in\Delta_L$. Thus $L$ is $(\Gamma_L,\Delta_L)$-minimal.
%This is our definition of $(\check\Gamma_L, \check\Delta_L)$-minimality.
\item
%By~\ref{i:GammaLDeltaLcommute}, it is enough to prove essential largeness of $\check\Gamma_L$ and $\check\Delta_L$.
%
We start with $\Delta_L$ and prove that the restriction-corestriction map $\rho_L\colon \Delta\to \check\Delta_L$ is bijective up to equivalence in $\Endog(A)$, resp.~$\Endog(L)$.
But up to equivalence $\rho_L$ induces a morphism, so we simply study the fibre over $0$.
Suppose $\delta\in\Delta$ with $\check\delta = \rho_L(\delta)$ equivalent to $0$. Then for $\gamma \in \Gamma$ we have
\[\delta\gamma[L] \le \gamma\delta[L] + \kat\gamma + \kat\delta \le \gamma[\check\delta[L] + \kat\delta] + \kat\gamma + \kat\delta,\]
which is finite. Hence $\delta[A]$ is finite by Proposition~\ref{p:lines}.\ref{i:Asumoflines}, so $\delta$ is equivalent to $0$.
% This proves that essential largeness is preserved from $\Delta$ to $\check\Delta_L \subseteq \Delta_L$.

The rest of the proof is devoted to $\Gamma_L$.
Consider a finite subset $\{\gamma_1, \dots, \gamma_n\} \subseteq \Gamma$ such that all $\gamma_i[A]$ are lines, and $K = \bigcap_i \ker \gamma_i$ has minimal dimension possible.

We claim that one of the $\gamma_i \Gamma$ is essentially large.
Suppose not. Argueing if necessary in the quotient ring $\Gamma/\!\sim$ modulo equivalence, there is $\varphi \in \Gamma$ inequivalent to $0$ such that all $\gamma_i \varphi$ are equivalent to $0$. This means that each $\gamma_i \varphi[A]$ is finite, viz., $\varphi[A]^\circ \leq \bigcap \ker \gamma_i = K$.
However $\varphi[A]$ is infinite; by Proposition~\ref{p:lines}.\ref{i:Lexistence} and \ref{p:lines}.\ref{i:setidempotent}, there is a line $L'=\gamma'[L']=\gamma'[A]$ contained in $\varphi[A]$.
But $\dim (K \cap \ker \gamma') = \dim K$ by minimality of $\dim K$, so $L'^\circ\leq\varphi[A]^\circ \leq K \cap \ker \gamma$, contradicting $L' = \gamma[L'] \gg 0$.
Hence there is $i$ with $\gamma_i\Gamma$ essentially large.

We shall deduce that $\gamma_0 \Gamma$ is essentially large (recall $L = \gamma_0[A]$). Since $L_i = \gamma_i[A]$ is a line, by Proposition~\ref{p:lines}.\ref{i:transitive} there is $\alpha\in\Gamma$ such that $\gamma_0\alpha[L_i]=L$. By Proposition~\ref{p:lines}.\ref{i:finitekernel}, $L_i \cap \ker (\gamma_0\alpha)$ is finite.
Now if for some endogeny $\varphi$ one has $\gamma_0\alpha \gamma_i\varphi\sim 0$, then $\gamma_i \varphi[A]^\circ \leq \ker (\gamma_0\alpha)$. But $L_i \cap \ker (\gamma_0\alpha)$ is finite, so $\gamma_i\varphi[A]^\circ = 0$ and $\gamma_i\varphi\sim 0$.
It follows that left multiplication $\gamma_i \Gamma \to \gamma_0 \alpha \gamma_i \Gamma$ preserves inequivalence in $\Endog(A)$, so $\gamma_0 \alpha \gamma_i \Gamma \le \gamma_0\Gamma$ is essentially (at least) as large as $\gamma_i\Gamma$.

We now conclude that $\Gamma_L$ is essentially large.
By Proposition~\ref{p:lines}~\ref{i:Asumoflines}, there are $\gamma_1, \dots, \gamma_n \in \Gamma$ (not necessarily the same as above) such that $A^\circ \le \sum_i \gamma_i[L]$. Let $J$ be a large set indexing a family $\{\varphi_j:j\in J\} \subseteq \gamma_0\Gamma$ of pairwise inequivalent endogenies.
Notice that $\im \varphi_j \gamma_i \leq L$, so $\rho_L(\varphi_j \gamma_i) \in \Gamma_L$.
Then for $j \neq j'$ in $J$ there is $i \in \{1, \dots, n\}$ such that $(\varphi_j-\varphi_{j'})\gamma_i[L] \gg 0$. By Ramsey or Erd\H{o}s-Rado (or, alternatively, Ramsey and compactness) there are a common $i \in \{1, \dots, n\}$ and a large $J_0\subseteq J$ such that for any $j \neq j'$ in $J_0$ we have $(\varphi_j-\varphi_{j'})\gamma_i[L] \gg 0$. Then $\{\rho_L(\varphi_j\gamma_i):j\in J_0\}$ are all inequivalent and in $\Gamma_L$, so $\Gamma_L$ is essentially large.
\qedhere
\end{enumerate}
\end{proof}

\subsection{Controlling the katakernel}\label{s:radicalisation:globalisation}

\begin{definition}
A \emph{quasi-projection} of $A$ onto $B \leq A$ is an endogeny $\pi$ with image $\pi[A] = B$ and such that $\rho_B(\pi) = 1_B + \kat \pi$.
\end{definition}

\begin{proposition}\label{p:linereduction2}
Suppose that $\Gamma=\CC(\Delta)$, and that $\Kat(\Gamma_L,\Delta_L)$ is finite for every line $L$.
\begin{enumerate}%[label=(\arabic*)]
\item\label{i:pi}
For each line $L$ there is a quasi-projection $\pi\in\Gamma$ from $A$ to $L$ with katakernel $\Kat(\Gamma_L,\Delta_L)$.
\item\label{i:Aquasidirectsum}
There are lines $L_0, \dots, L_n$ and quasi-projections $\pi_i:A\to L_i$ in $\Gamma$ for $i\le n$ such that $\sum_{i\le n}\pi_i$ is equivalent to $1$ in $\Gamma$.
\item\label{i:radAfini}
$\Kat(\Gamma, \Delta)$ is finite.
\end{enumerate}
\end{proposition}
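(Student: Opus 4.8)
\emph{Proof proposal.}
The plan is to prove (\ref{i:pi}) and (\ref{i:Aquasidirectsum}) together by exhibiting $A$, up to finite index, as an almost-direct sum of lines whose coordinate projections lie in $\Gamma$, and then to deduce (\ref{i:radAfini}) by a short bookkeeping computation.

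First I would build the decomposition. By Proposition~\ref{p:lines}.\ref{i:Asumoflines}, $A^\circ$ is contained in a finite sum of $\Gamma$-images of a fixed line, and since a $\Gamma$-image of a line is finite or a finite extension of a line (Proposition~\ref{p:lines}.\ref{i:Lexistence} and~\ref{i:finitekernel}), $A^\circ$ lies in a finite sum of lines. Among the finite families of lines whose sum is \emph{almost direct} (dimension $=$ sum of the dimensions) choose a maximal one $L_0,\dots,L_n$, and — for (\ref{i:pi}) — arrange that it contains the prescribed line $L$, which is legitimate since $L=\gamma[L]$ is itself a $\Gamma$-image of $L$ by Proposition~\ref{p:lines}.\ref{i:setidempotent}. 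By maximality every line, hence $A^\circ$, is almost contained in $B:=L_0+\dots+L_n$, so $B\supseteq A^\circ$ has finite index in $A$. Almost-directness makes the coordinate maps $p_i\colon B\to L_i$ well defined modulo the finite intersections $L_i\cap\sum_{j\ne i}L_j$, and extending over the finite quotient $A/B$ they become definable endogenies of $A$ with $p_i[A]=L_i$ and $\rho_{L_i}(p_i)=1_{L_i}+\kat p_i$, i.e.\ quasi-projections onto $L_i$. Since each $L_i$ is weakly $\Delta$-invariant (Proposition~\ref{p:lines}.\ref{i:Linvariance}), $\Delta$ respects the decomposition up to katakernels; one then checks that $p_i$ commutes \emph{sharply} with every $\delta\in\Delta$, whence $p_i\in\CC(\Delta)=\Gamma$. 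Writing $F_i:=\Kat(\Gamma_{L_i},\Delta_{L_i})$ (finite by hypothesis), the endogeny $z_{F_i}=A\times F_i$ also lies in $\Gamma$, because its commutator with any $\delta$ is controlled by the weak $\delta$-invariance of $F_i\le L_i$. Thus $\pi_i:=p_i+z_{F_i}\in\Gamma$ is a quasi-projection onto $L_i$ with katakernel $F_i$, which gives (\ref{i:pi}); and $\sum_i p_i$ is the identity on $B\supseteq A^\circ$, so $\sum_i p_i\sim 1$ and hence $\sum_i\pi_i\sim 1$, which gives (\ref{i:Aquasidirectsum}).

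For (\ref{i:radAfini}) set $F^*:=\sum_i F_i$ and $\varepsilon:=1-\sum_i\pi_i\in\Gamma$; by (\ref{i:Aquasidirectsum}) one has $\kat\varepsilon=F^*$ and $\im\varepsilon\le F^*$. The key is that pushing a katakernel through $\pi_i$ lands it inside $L_i$, where katakernels are controlled:
\begin{itemize}
\item for $\gamma\in\Gamma$, the endogeny $\pi_i\gamma\in\Gamma$ has image in $L_i$, so $\rho_{L_i}(\pi_i\gamma)\in\Gamma_{L_i}$ and hence $\pi_i[\kat\gamma]=\kat(\pi_i\gamma)\le\Kat(\Gamma_{L_i})\le F_i$; summing over $\gamma$, $\pi_i[\Kat(\Gamma)]\le F_i$;
\item for $\delta\in\Delta$, weak $\delta$-invariance of $L_i$ gives $\kat\delta\cap L_i=\kat\rho_{L_i}(\delta)\le\Kat(\Delta_{L_i})\le F_i$, while sharp commutation of $\pi_i$ with $\delta$ gives $\pi_i[\kat\delta]\le\kat\delta+\kat\pi_i$ by Lemma~\ref{l:cokerinvariant}.\ref{i:cokerinvariance}; as also $\pi_i[\kat\delta]\le L_i$ and $\kat\pi_i=F_i\le L_i$, one gets $\pi_i[\kat\delta]\le(\kat\delta+F_i)\cap L_i\le F_i$; summing over $\delta$, $\pi_i[\Kat(\Delta)]\le F_i$.
\end{itemize}
Now for $X\in\{\Kat(\Gamma),\Kat(\Delta)\}$, using $1=\sum_i\pi_i+\varepsilon$ and additivity of these relations,
\[ X=1[X]\le\Big(\sum_i\pi_i\Big)[X]+\varepsilon[X]\le\sum_i\pi_i[X]+\im\varepsilon\le\sum_i F_i+F^*=F^*, \]
so $\Kat(\Gamma)$, $\Kat(\Delta)$ and therefore $\Kat(\Gamma,\Delta)=\Kat(\Gamma)+\Kat(\Delta)$ are finite.

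The main obstacle is the decomposition underlying (\ref{i:pi}) and (\ref{i:Aquasidirectsum}): extracting a maximal almost-independent family of lines and, above all, verifying that the coordinate projections $p_i$ commute \emph{sharply} — not merely modulo equivalence — with $\Delta$ (sharpness is not inherited by restriction, cf.\ Remark~\ref{r:subcommuting}), so that they genuinely land in $\CC(\Delta)$; keeping track of the finite errors (the index $|A:B|$, the intersections $L_i\cap\sum_{j\ne i}L_j$, and the katakernel arithmetic in the sharpness check) is where the real work sits, after which (\ref{i:radAfini}) is a short formal consequence. An alternative proof of (\ref{i:pi}) alone starts from $\gamma\in\Gamma$ with $\gamma[A]=\gamma[L]=L$ and $\check\gamma=\rho_L(\gamma)$ surjective with finite kernel (Proposition~\ref{p:lines}.\ref{i:setidempotent}), passes to $L/F_L$ — an irreducible bi-module on which $\Gamma_L,\Delta_L$ act by commuting endomorphisms — and argues that the finite kernel of $\check\gamma$, being the kernel of a morphism and so fully $\Delta_L$-invariant (cf.\ Remark~\ref{r:inverse}), has finite $\Gamma_L$-span by irreducibility, so that after one further finite blur $\check\gamma$ becomes an automorphism whose inverse composes with $\gamma$ to the required quasi-projection; the first route, however, yields (\ref{i:pi}) and (\ref{i:Aquasidirectsum}) at one stroke.
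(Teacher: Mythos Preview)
Your deduction of (\ref{i:radAfini}) from the quasi-projections is essentially the paper's argument and is correct. The gap is exactly where you flag it, but it is an obstruction, not bookkeeping. For a naive coordinate projection $p_i$ on an almost-direct sum $\sum_jL_j$ one has $\kat p_i\supseteq L_i\cap\sum_{j\ne i}L_j$, and sharp commutation with $\delta$ requires in particular $\delta[\kat p_i]\le\kat p_i+\kat\delta$, i.e.\ that $\kat p_i$ be weakly $\delta$-invariant. But $\kat p_i$ is an \emph{intersection} of weakly invariant subgroups, and the paper explicitly warns (Remark following the definition of invariance) that such intersections need not be weakly invariant; symmetrically $p_i[\kat\delta]\le\kat p_i+\kat\delta$ would force the $i$-th coordinate of each element of $\kat\delta$ to lie in $\kat p_i+\kat\delta$, which also fails in general. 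Padding by $z_{F_i}$ only helps if $\kat p_i\le F_i$, and you have no control over that. So an arbitrary almost-direct decomposition does not yield projections in $\CC(\Delta)$, and neither the membership $\pi_i\in\Gamma$ nor the exact katakernel $\kat\pi_i=F_i$ required for (\ref{i:pi}) is established.

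The paper proceeds in the opposite order. It proves (\ref{i:pi}) first by an explicit formula: with $\gamma_0[A]=\gamma_0[L]=L$ from Proposition~\ref{p:lines}.\ref{i:setidempotent}, set $K_L=\check\gamma_0^{-1}[\Kat(\Gamma_L,\Delta_L)]$---finite and, crucially, fully $\Delta_L$-invariant by Lemma~\ref{l:radical}.\ref{i:inverse}---and put $\pi[a]=\check\gamma_0^{-1}\gamma_0[a]+K_L$; a direct computation then gives $\pi\in\CC(\Delta)=\Gamma$, after which $\kat\check\pi\le\Kat(\Gamma_L,\Delta_L)$ forces $K_L=\Kat(\Gamma_L,\Delta_L)$ on the nose. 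Part (\ref{i:Aquasidirectsum}) is then an induction that \emph{uses} (\ref{i:pi}): if $\rho_{\im\gamma}(\gamma)\sim1$, one checks that $H=\im(1-\gamma)$ is a quasi-complement to $\im\gamma$ on which $1-\gamma$ restricts to $\sim1_H$; a line $L\le H$ is found, $\pi'$ is supplied by (\ref{i:pi}), and the next quasi-projection is $\pi'(1-\gamma)$, automatically in $\Gamma$ and equivalent to $0$ on $\im\gamma$. Thus the decomposition is built so that the projections lie in $\Gamma$ by construction, rather than hoped to after the fact. Your ``alternative'' sketch is closer in spirit, but the step ``finite $\Gamma_L$-span by irreducibility'' does not follow from $(\Gamma_L,\Delta_L)$-minimality; the working substitute is the full $\Delta_L$-invariance of $\check\gamma_0^{-1}[\Kat(\Gamma_L,\Delta_L)]$ from Lemma~\ref{l:radical}.\ref{i:inverse}.
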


\begin{proof}
We shall simply write $\Kat(L)$ for $\Kat(\Gamma_L,\Delta_L)$, and $\Kat(A)$ for $\Kat(\Gamma,\Delta)$.
\begin{enumerate}%[label=(\arabic*)]
\item
By Proposition~\ref{p:lines}.\ref{i:setidempotent}, there is $\gamma_0 \in \Gamma$ such that $L = \gamma_0[L] = \gamma_0[A]$.
In particular, $A = L + \ker \gamma_0 = L + \gamma_0^{-1}[\kat \gamma_0]$.
Moreover, $\check\gamma_0 = \rho_L(\gamma_0)$ is surjective with a finite kernel. We apply Lemma~\ref{l:radical} to $L, \Gamma_L, \Delta_L$. Then $\check\gamma_0[\Kat(L)] \leq \Kat(L)$, and $K_L = \check\gamma_0^{-1}[\Kat(L)] \ge \Kat(L)$. Moreover $K_L$ is fully $\Delta_L$-invariant by Lemma~\ref{l:radical}.\ref{i:inverse}.; since $\ker\check\gamma_0$ is finite and $\Kat(L)$ is finite by assumption, $K_L$ must be finite.

For $a \in A$, let $\pi[a] = \check\gamma_0^{-1}\gamma_0[a] + K_L$, an endogeny from $A$ onto $L$ with katakernel $K_L$.

We shall show $\pi \in \CC(\Delta)$. Let $\delta \in \Delta$. Then by sharp commutation of $\gamma$ and $\delta$, we get:
\begin{align*}
\check\gamma_0\pi\delta[a] &= \check\gamma_0[\check\gamma_0^{-1}\gamma_0\delta[a] + K_L] = \gamma_0\delta[a] + \check\gamma_0[K_L] \le \gamma_0\delta[a] + \Kat(L)\\
&  \leq L \cap (\delta\gamma_0[a] + \kat \gamma_0) + \Kat(L) = (L \cap \delta\gamma_0[a]) + \Kat(L).
\end{align*}
Since $\gamma_0[A] = L = \gamma_0[L]$, there is $\ell \in L$ with $\gamma_0[a] = \gamma_0[\ell] = \check\gamma_0[\ell]$. Thus $\pi[a] = \ell + K_L$ and:
\[
\check\gamma_0\pi\delta[a]
\le (L\cap\delta\gamma_0[a]) + \Kat(L)
= (L\cap\delta\gamma_0[\ell]) + \Kat(L) = \check\delta\check\gamma_0[\ell] + \Kat(L).\]
On the other hand, by Proposition~\ref{p:GammaLDeltaL}.\ref{i:GammaLDeltaLcommute}, $\check\gamma_0$ and $\check\delta$ commute sharply, so:
\[\check\gamma_0\check\delta\pi[a] = \check\gamma_0\check\delta
[\ell + K_L] \le \check\delta\check\gamma_0[\ell + K_L] + \kat\check\delta + \kat\check\gamma_0 + \Kat(L) = \check\delta\check\gamma_0[\ell] + \Kat(L).
\]
Together, this gives $\check\gamma_0 (\pi\delta - \check\delta\pi)[a] \leq \Kat(L)$.
Since $K_L = \check\gamma_0^{-1}[\Kat(L)] \ge \Kat(L)$, we conclude
\[\pi\delta[a] + K_L = \check\delta\pi[a] + K_L,\]
whence
\[\delta\pi[a]-\pi\delta[a] = \check\delta\pi[a]-\pi\delta[a] + \kat\delta\le K_L + \kat\delta = \kat\pi + \kat\delta.\]
Thus $\pi\in\CC(\Delta) = \Gamma$.
In particular $\check\pi \in \Gamma_L$ and $\Kat(L) \le K_L = \kat\check\pi \le \Kat(L)$.
\item We shall show first that if $\gamma\in\Gamma$ is such that $B=\gamma[A]$ has infinite index in $A$ and $\rho_B(\gamma)$ is equivalent to $1_B$, then there exist a line $L$ and a quasi-projection $\pi:A\to L$ such that $B\cap L$ is finite and $\rho_{B+L}(\gamma+\pi)$ is equivalent to $1_{B+L}$.

So suppose that $\gamma$, $B$ are as given, and $F\le B$ is a finite group with $a-\gamma[a]\le F$ for all $a\in B$.
If $a\in(1-\gamma)[A]$, say 
$$a\in(1-\gamma)[b] = b-\gamma[b]=b-b'+\kat(\gamma)$$
for some $b\in A$ and $b'\in\gamma[b] \le B$, we have:
\begin{align*}
(1-\gamma)[a] & = a-\gamma[a]\le b-b' +\kat(\gamma)-\gamma[b-b' + \kat(\gamma)]\\
& = b-b' + \kat(\gamma)-\gamma[b] + \gamma[b'] + \gamma[\kat(\gamma)]\\
& = b-b'-b' + b' + \kat(\gamma^2) = b-b' + \kat(\gamma^2) = a + \kat(\gamma^2).
\end{align*}

Put $H = \im(1-\gamma)$, a weakly $\Delta$-invariant virtually connected subgroup of $A$. Then $A = B+H$, and $\rho_H(1-\gamma)$ is equivalent to $1_H$. If $a\in B\cap H$, then 
\[\gamma[a] =(1-(1-\gamma))[a]\le a-(a+\kat(\gamma^2))=\kat(\gamma^2),\]
and $a\in \rho_B(\gamma)^{-1}[\kat(\gamma^2)]$. But $\dim B=\dim\gamma[B]$, so $\dim\ker\rho_B(\gamma)=0$ and $\ker\rho_B(\gamma)$ is finite. Thus $B\cap H$ is finite and the sum is quasi-direct.

Since $B$ has infinite index in $A$, the quasi-complement $H=\im(1-\gamma)$ is infinite and contains a line $L$ by Proposition~\ref{p:lines}.\ref{i:Lexistence}; by part~\ref{i:pi}.\ there is a quasi-projection $\pi'\in\Gamma$ onto $L$ with katakernel $\kat(\pi')=\Kat(L)$. But $1-\gamma$ is equivalent to $0$ on $B$ and to $1$ on $H$, so $\pi = \pi'(1-\gamma)$ is also a quasi-projection to $L$ which is equivalent to $0$ on $B$ and to $1$ on $L$. Moreover, 
$$\Kat(L)=\kat(\pi')\le\kat(\pi)\le\Kat(L),$$
so we have equality. As $\gamma$ is equivalent to $0$ on $L$, we get that $\gamma+\pi$ is equivalent to $1$ on $B+L$. Note that $\im(\gamma+\pi)+(B\cap L)=B+L$, but $\im(\gamma+\pi)$ could be a subgroup of finite index in $B+L$.

Starting with $\gamma_0=0$, we now construct inductively lines $L_i$ and quasi-projections $\pi_i\in\Gamma$ onto $L_i$ with $\kat(\pi_i)=\Kat(L_i)$
such that $L_i\cap\sum_{j<i}L_j$ is finite for all $j$ and $\sum_{j\le i}\pi_i$ is equivalent to $1$ on $\sum_{j\le i}L_i$. By finite-dimensionality, $A^\circ\le\sum_{i\le n}L_i$ for some $n<\omega$. Then $A^\circ$ is contained in a quasi-direct sum of lines, and $\sum_{i\le n}\pi_i$ is equivalent to the identity on $\sum_{i\le n}L_i$. Thus $(1-\sum_{i\le n}\pi_i)[A]$ is finite, and $\sum_{i\le n}\pi_i$ is equivalent to $1$ on $A$.
\item
Work with a system of lines and quasi-projections as above.
Put $F = \im(1-\sum_i\pi_i)$, a finite weakly $\Delta$-invariant group containing all $\Kat(L_i)$.

We contend $\Kat (A) \leq F$.
For $\gamma\in\Gamma$, one has:
\[\pi_i[\kat\gamma] = \pi_i\gamma[0] = \rho_{L_i}(\pi_i\gamma)[0] = \kat\rho_{L_i}(\pi_i\gamma)\leq\Kat(L_i) \leq F.\]
Thus 
\[\kat\gamma \le \sum_i\pi_i[\kat\gamma] + F \le F.\]

Similarly, for $\delta\in\Delta$, one has:
\[\pi_i[\kat\delta] = \pi_i\delta[0]\le L_i\cap(\delta\pi_i[0] + \kat\pi_i) = \check\delta[\kat\check\pi_i] + \Kat(L_i) = \Kat(L_i) \leq F.\]
It follows as above that $\kat\delta \le F$.%, and $\kat_\Delta(A)$ is finite. Thus $\kat_{\Gamma,\Delta}(A)$ is finite as well.
\qedhere
\end{enumerate}
\end{proof}

\subsection{The induction}\label{s:induction}

We finally prove our main result, recalled for convenience.

\begin{theorema}
In a finite-dimensional theory, let $A$ be a definable, virtually connected, abelian group. Let $\Gamma$ and $\Delta$ be two invariant prerings of definable endogenies of $A$ such that $A$ is $(\Gamma,\Delta)$-minimal. Suppose $\Gamma$ and $\Delta$ commute sharply, both are essentially infinite, and one of them is essentially unbounded.
Then $\Kat(\Gamma,\Delta)$ is finite.
\end{theorema}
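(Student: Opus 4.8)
The proof will be an induction on $\dim A$. The plan is to reduce Theorem~A to the local version (Proposition~\ref{t:local}) by passing to a line $L$ and its associated prerings $\Gamma_L, \Delta_L$, which by Propositions~\ref{p:lines} and~\ref{p:GammaLDeltaL} inherit the hypotheses: they commute sharply, they are essentially as large as $\Gamma$ and $\Delta$, and $L$ is $(\Gamma_L,\Delta_L)$-minimal. The subtle point is that the local version presumes that every endogeny inequivalent to $0$ in $\Gamma_L\cup\Delta_L$ has finite kernel, which is exactly where minimality of $L$ combined with the induction hypothesis comes in.

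First I would dispose of a trivial base case: if $A$ is finite then $\Kat(\Gamma,\Delta)$ is trivially finite. So suppose $A$ is infinite and the theorem holds for all groups of smaller dimension. Take a line $L\in\Lambda$ (which exists by finite-dimensionality and virtual connectedness, cf.\ Proposition~\ref{p:lines}). If $\dim L < \dim A$, I claim we can apply the induction hypothesis inside $L$: by Proposition~\ref{p:GammaLDeltaL} the pair $(\Gamma_L,\Delta_L)$ on $L$ satisfies all hypotheses of Theorem~A (sharp commutation, essential infinitude of both, essential unboundedness of one, and $(\Gamma_L,\Delta_L)$-minimality of $L$), so by induction $\Kat(\Gamma_L,\Delta_L)$ is finite for this $L$. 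But we need this for \emph{every} line $L$; since all lines have the same dimension by Proposition~\ref{p:lines}.\ref{i:finitekernel}, the induction hypothesis applies to each of them. Thus the hypothesis ``$\Kat(\Gamma_L,\Delta_L)$ is finite for every line $L$'' of Proposition~\ref{p:linereduction2} is met, provided $\dim L < \dim A$.

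Then, in the case $\dim L < \dim A$, I would invoke Proposition~\ref{p:linereduction2}.\ref{i:radAfini} to conclude $\Kat(\Gamma,\Delta)$ is finite --- but that proposition requires $\Gamma = \CC(\Delta)$. To arrange this, I would enlarge $\Gamma$ to $\Gamma' = \CC(\Delta)$, noting $\Gamma'$ is still essentially as large as $\Gamma$ (since $\Gamma\subseteq\Gamma'$), still commutes sharply with $\Delta$, and is still invariant; $(\Gamma,\Delta)$-minimality only becomes stronger when $\Gamma$ grows (fewer weakly invariant subgroups). One must check that $\Gamma'$ remains a prering --- this is Lemma~\ref{l:cokerinvariant}.\ref{i:commutationprering} --- and that essential infinitude/unboundedness is preserved, which is immediate. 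Applying the argument to $(\Gamma',\Delta)$ gives $\Kat(\Gamma',\Delta)$ finite, hence $\Kat(\Gamma,\Delta)\le\Kat(\Gamma',\Delta)$ is finite.

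The remaining case is $\dim L = \dim A$, i.e.\ some line has full dimension; equivalently every infinite $\Gamma$-image has finite index, so that every $\gamma\in\Gamma$ inequivalent to $0$ has finite kernel by the Dimension Lemma. The main obstacle is handling $\Delta$ here: we need that every $\delta\in\Delta$ inequivalent to $0$ also has finite kernel, so that Proposition~\ref{t:local} applies directly to $(\Gamma,\Delta)$ on $A$. This is where $(\Gamma,\Delta)$-minimality must be used: if $\delta\not\sim 0$ had infinite kernel, then $(\ker\delta)^\circ$ would be infinite; but $\ker\delta$ need not be weakly $\Gamma$- or $\Delta$-invariant in general, so one cannot simply contradict minimality. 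Instead I expect one argues via images: $\delta[A]$ is infinite (as $\delta\not\sim 0$), so it contains a line $L'$ which has full dimension, hence $\delta[A]$ has finite index in $A$, hence $\ker\delta$ is finite by the Dimension Lemma after all. With both $\Gamma$ and $\Delta$ having only finite-kernel endogenies (up to equivalence to $0$), Proposition~\ref{t:local} yields $\Kat(\Gamma,\Delta)$ finite, completing the induction.
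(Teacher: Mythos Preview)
Your overall architecture matches the paper's: induction on $\dim A$, enlarging $\Gamma$ to $\CC(\Delta)$, applying the inductive hypothesis to every line via Proposition~\ref{p:GammaLDeltaL}, and concluding by Proposition~\ref{p:linereduction2}.\ref{i:radAfini}. That part is fine.

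The gap is in your treatment of the case $\dim L = \dim A$. You correctly observe that every $\gamma\in\Gamma$ inequivalent to $0$ then has finite kernel. But your argument for $\Delta$ breaks: you write that $\delta[A]$ ``contains a line $L'$'', invoking Proposition~\ref{p:lines}.\ref{i:Lexistence}. That proposition only produces lines inside \emph{$\Gamma$-images}; $\delta[A]$ is a $\Delta$-image and there is no reason it should contain any $\gamma'[A]$. Nor can you fall back on $(\Gamma,\Delta)$-minimality directly, since $\delta[A]$ is weakly $\Gamma$-invariant by Lemma~\ref{l:invariance} but need not be weakly $\Delta$-invariant (elements of $\Delta$ are not assumed to commute sharply with one another). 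So as written the case $\dim L = \dim A$ with some $\delta\not\sim 0$ of infinite kernel is not handled.

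The paper's fix is simply to exploit the symmetry between $\Gamma$ and $\Delta$ from the outset. After enlarging so that $\Gamma = \CC(\Delta)$ and $\Delta = \CC(\Gamma)$, the dichotomy is: either every endogeny in $\Gamma\cup\Delta$ inequivalent to $0$ has finite kernel, in which case Proposition~\ref{t:local} applies directly; or some $\varphi\in\Gamma\cup\Delta$ has infinite kernel, and \emph{by symmetry} one may assume $\varphi\in\Gamma$, whence $\dim L\le\dim\varphi[A]<\dim A$ and your inductive argument goes through. Your case split on $\dim L$ (for $\Gamma$-lines only) obscures this symmetry and leaves the asymmetric residual case uncovered.
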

\begin{proof}
Suppose that $A$ is a counterexample of minimal dimension.

We replace $\Gamma$ by $\Gamma'=\CC(\Delta)\ge\Gamma$. This clearly preserves all the hypotheses, and $\Kat(\Gamma,\Delta)\le\Kat(\Gamma',\Delta)$. So we can assume $\Gamma=\CC(\Delta)$ and $\Delta = \CC(\Gamma)$.
If $\Gamma$ and $\Delta$ act with finite kernels, then $\Kat_{\Gamma, \Delta}(A)$ is finite by Proposition~\ref{t:local}. Otherwise, there is $\varphi\in\Gamma\cup\Delta$ with infinite kernel. By symmetry, we may assume $\varphi\in\Gamma$.

Any line $L \in \Lambda$ has dimension $\dim L\le \dim\varphi[A]<\dim A$ and is virtually connected by Proposition~\ref{p:lines}.\ref{i:Lexistence}. The prerings $\Gamma_L$ and $\Delta_L$ sharply centralize one another by Proposition~\ref{p:GammaLDeltaL}.\ref{i:GammaLDeltaLcommute}. Moreover $L$ is $(\Gamma_L, \Delta_L)$-minimal by Proposition~\ref{p:GammaLDeltaL}.\ref{i:lineminimal}, and $\Gamma_L$ and $\Delta_L$ have essentially the same size as $\Gamma$ and $\Delta$ by Proposition~\ref{p:GammaLDeltaL}.\ref{i:Gammalarge}.

By induction, $\Kat(\Gamma_L,\Delta_L)$ is finite for any line $L$. But then $\Kat(\Gamma,\Delta)$ is finite by Proposition~\ref{p:linereduction2}.\ref{i:radAfini}.
\end{proof}

\section{Linearisation (proof of Theorem~B)}\label{S:linearisation}

In this section we prove Theorem~B. First, we restate the expected definition of minimality for a bi-module under endomorphisms.

\begin{definition}
Let $A$ be a definable, connected, abelian group. Let $\Gamma$ and $\Delta$ be two commuting rings of endomorphisms.
% $(\Gamma, \Delta)$-invariant group.
The group $A$ is $(\Gamma, \Delta)$-minimal if it has no non-trivial, proper, connected, definable $(\Gamma, \Delta)$-invariant subgroup of infinite index.
\end{definition}

\begin{theoremb}
In a finite-dimensional theory, let $A$ be a definable, connected, abelian group. Let $\Gamma$ and $\Delta$ be two invariant rings of definable endomorphisms of $A$ such that $A$ is $(\Gamma, \Delta)$-minimal. Suppose that $\Gamma$ and $\Delta$ commute, both are infinite and one of them is unbounded. Then there is a definable field $\bK$ such that $A$ is definably a $\bK$-vector space of finite linear dimension, and both $\Gamma$ and $\Delta$ act $\bK$-linearly.
\end{theoremb}

Similar to the proof of Theorem A, the proof will proceed by (double) reduction to lines. However, the fact that we are working with endomorphisms rather than endogenies will allow for more precision.
\begin{remark}\label{r:centraliser}\begin{enumerate}\item If $A$ is merely virtually connected, then any endomorphism of $A$ restricts to an endomorphism of $A^\circ$. One {\em could} work with virtually connected $A$; in the course of the proof one would have to divide out by a finite subgroup $F$, and end up with $A/F$ a virtually connected definable vector space over a definable virtually connected field $\bK$. However, the connected component of a virtually connected field is an ideal, whence the whole field. So $\bK$ is connected, as is $A/F$.
\item\label{i:centraliser} If we replace $\Gamma$ by $C\Delta)\ge\Gamma$ and then $\Delta$ by $C(C(\Delta))\ge\Delta$, we obtain two commuting rings of endomorphism of $A$ containing the original $\Gamma$, $\Delta$, such that moreover each is the centraliser of the other.\end{enumerate}\end{remark}

\subsection{Local version (finite kernels)}

\begin{proposition}[Local version of Theorem B]\label{p:localbis}
Same setting as Theorem~B. Instead of assuming $(\Gamma, \Delta)$-minimality, suppose that every endomorphism in $\Gamma \cup \Delta$ has finite kernel.
Also suppose that $\Gamma = C(\Delta)$ and $\Delta = C(\Gamma)$.

Then there is a definable skew field $\bK$ such that $A$ is a $1$-dimensional $\bK$-vector space on which $\Gamma$ acts as $\bK$ and $\Delta$ as $\bK^\op$. Moreover, $\bK$ has finite degree over its centre $Z$, so $A$ is a finite-dimensional $Z$-vector space, and $Z\subseteq\Gamma\cap\Delta$.
\end{proposition}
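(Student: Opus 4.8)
The plan is to recognise $A$, with its $\Gamma$- and $\Delta$-action, as the regular bimodule over a definable skew field. First I would record the purely algebraic consequences of the finite-kernel hypothesis. By the Dimension Lemma a nonzero $\gamma\in\Gamma$ has finite kernel, hence an image of finite index, hence (by connectedness of $A$) satisfies $\gamma[A]=A$; the same applies to $\Delta$. Consequently $\Gamma$ and $\Delta$ are domains --- if $\gamma\gamma'=0$ with $\gamma\ne0$ then $\im\gamma'\le\ker\gamma$ is finite, so $\gamma'=0$ --- and every nonzero element of $\Gamma$ or $\Delta$ is invertible in $\Endog(A)/{\sim}$. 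Note also that $m\cdot\Id_A\in\Gamma\cap\Delta$ for each $m\in\bZ$, since it commutes with everything; this helps keep track of the finitely many torsion ``error'' subgroups that appear below.

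The heart of the argument is the construction of a \emph{definable} skew field $\bK$. Fix a set $C$ of parameters over which $\Gamma$ and $\Delta$ are invariant, and a point $a\in A$ generic over $C$; as $a$ avoids the finite sets $\ker\eta$ for all nonzero $\eta\in\Gamma\cup\Delta$, evaluation $\gamma\mapsto\gamma[a]$ embeds $\Gamma$ injectively as a subgroup of $A$, and likewise for $\Delta$. I would then show that $A$ has no proper, infinite-index, definable, connected, $\Gamma$- and $\Delta$-invariant subgroup: for such a $B$ one produces, using $\Delta=C(\Gamma)$, a $\Gamma$-linear endomorphism of $A$ with infinite kernel, contradicting the hypothesis. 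This ``effective minimality'' (substituting for the unavailable $(\Gamma,\Delta)$-minimality) is established by a maximal-dimension-finite-subsum argument in the spirit of the proof of Proposition~\ref{p:GammaLDeltaL}.\ref{i:lineminimal}, and it is here that the essential-unboundedness hypothesis is spent. It follows that the $\langle\Gamma,\Delta\rangle$-submodule of $A$ generated by $a$ has finite index, hence equals $A$ by connectedness, so $A$ is a cyclic, faithful $\langle\Gamma,\Delta\rangle$-module. Transporting the ring structure along $\gamma\mapsto\gamma[a]$ equips $A$ with a \emph{definable} multiplication with unit $a$, for which $\Gamma$ acts by left and $\Delta$ by right multiplications; the resulting ring is a skew field $\bK$ because $\Gamma$ is a domain acting on the now-cyclic faithful module $A$, so its nonzero elements are bijective.

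It remains to identify the dimension and the centre. Since $A$ is the regular $\bK$-bimodule, Schur's lemma and the double-centraliser property give $\Delta=C(\Gamma)=\bK^\op$ and $\Gamma=C(\Delta)=\bK$; and $A$ must be $1$-dimensional over $\bK$, for otherwise $\End_\Gamma(A)\cong M_d(\bK^\op)$ with $d\ge2$ would contain a nonzero nilpotent, that is, an endomorphism of the infinite group $A$ with infinite kernel, contradicting the hypothesis. Finally $Z:=Z(\bK)$ is a definable subfield; as $Z(\bK^\op)=Z$ it lies in both $\bK=\Gamma$ and $\bK^\op=\Delta$, hence in $\Gamma\cap\Delta$. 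Moreover $\bK\cong A$ is a definable $Z$-vector space sitting inside the finite-dimensional group $A$, so $[\bK:Z]=\dim_Z A$ is finite by finite-dimensionality, and $A$ is accordingly a finite-dimensional $Z$-vector space.

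The main obstacle is the middle step, and within it the transition from the merely \emph{invariant} ring $\langle\Gamma,\Delta\rangle$ to a genuinely \emph{definable} skew field: proving effective minimality without assuming it, and checking that the transported multiplication on $A$ is indeed definable, associative, and invertible. Everything surrounding this is standard ring theory.
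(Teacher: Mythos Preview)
Your proposal diverges from the paper's proof and carries gaps beyond the ones you flag. The paper's argument is short: it first upgrades ``finite kernel'' to ``trivial kernel''. If some nonzero $\varphi\in\Gamma\cup\Delta$ (say $\varphi\in\Delta$) had nontrivial kernel, then $B=\bigcup_n\ker(\varphi^n)$ is a countably infinite $\Gamma$-invariant subgroup; a size argument using unboundedness and infiniteness of $\Gamma$ and $\Delta$ then produces a nonzero endomorphism vanishing on an infinite set, contradicting the hypothesis. Once kernels are trivial, every nonzero element of $\Gamma\cup\Delta$ is an automorphism; the paper invokes \cite{wa20} to conclude that $\Gamma$ and $\Delta$ are Ore domains whose skew fields of fractions $\bK,\bL$ are type-definable and still act by automorphisms. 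From $\bK\le C(\Delta)=\Gamma\le\bK$ one gets $\Gamma=\bK$ outright, and the $1$-dimensionality and finite-over-centre claims then follow quickly (the latter again by \cite{wa20}).

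Your route has two concrete problems. First, the ``effective minimality'' step is not established: from a proper connected $(\Gamma,\Delta)$-invariant $B\le A$ you promise to manufacture some $\delta\in C(\Gamma)=\Delta$ with infinite kernel, but you exhibit no such map---there is no reason for $B$ to have a $\Gamma$-invariant complement, nor for $A/B$ to embed $\Gamma$-covariantly back into $A$---and the reference to Proposition~\ref{p:GammaLDeltaL}.\ref{i:lineminimal} does not help, since that argument \emph{consumes} $(\Gamma,\Delta)$-minimality of $A$ rather than producing it. Second, even granting minimality, you obtain only $\langle\Gamma,\Delta\rangle\cdot a=A$, whereas transporting multiplication along $\gamma\mapsto\gamma(a)$ requires $\Gamma\cdot a=A$; these differ, since $\Gamma\cdot a$ is $\Gamma$-invariant but not visibly $\Delta$-invariant (and need not be definable either, so minimality does not force it to be all of $A$). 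The idea you are missing is precisely the paper's first step: once kernels are \emph{trivial}, $\Gamma$ is already a division ring of automorphisms, the machinery of \cite{wa20} makes it a type-definable skew field, and both your minimality detour and the structure-transport become unnecessary.
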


\begin{remark} Non-commutativity of the skew field cannot be avoided, since a real closed field is dimensional and interprets a field of quaternions.\end{remark}

\begin{proof} By dimensionality and connectedness, all endomorphisms from $\Gamma\cup\Delta$ are surjective. So if one of them, say $\varphi$, has nontrivial kernel, then $B = \bigcup_{n<\omega} \ker (\varphi^n)$ is a countably infinite group. By symmetry, we may assume $\varphi\in\Delta$. Hence $B$ is $\Gamma$-invariant.
\begin{itemize}
\item
First suppose that $\Gamma$ is unbounded. Of course $\End(B)$ is bounded.
By unboundedness, there is $\gamma \in \Gamma\setminus\{0\}$ with $B \leq \ker \gamma$, a contradiction to kernel finiteness.
\item
Now suppose that $\Gamma$ is bounded. By assumption, $\Delta$ is not; so by the above, $\Gamma$ acts with trivial kernels. But $\ker\varphi$ is finite $\Gamma$-invariant and $\Gamma$ is infinite. So there is $\gamma \in \Gamma$ with $\ker\varphi\leq \ker \gamma$, a contradiction to kernel triviality.
\end{itemize}
So $\Delta$ acts with trivial kernels, as does $\Gamma$ by symmetry. It follows that $\dim X\le \dim A$ for all type-definable $X\subseteq\Gamma$ or $X\subseteq\Delta$, as any $\varphi\in\Gamma\cup\Delta$ is definable over $(a,\varphi(a))$ for any $a\in A\setminus\{0\}$. Thus $\Gamma$ and $\Delta$ are Ore rings, and their fraction skew fields $\bK$ and $\bL$ are type-definable by \cite{wa20}. Moreover $\bK, \bL$ still act by automorphisms on $A$. Now $\bK = \Frac \Gamma\leq C(\Delta) = \Gamma \leq \bK$, so $\Gamma = \bK$ and $\Delta = \bL$ are skew-field of automorphisms, and $A$ is a vector space over each.

Linear dimensions are finite since $\bK$ and $\bL$ are infinite. If the linear dimension $\dim_\bK A$ were $> 1$, then there would exist a non-zero, non-injective, $\bK$-linear endomorphism $f$ of $A$. But the dimension being finite, $f$ is definable: hence $f \in C_{\DefEnd(A)}(\bK) = \bL$, a contradiction. So linear dimensions are $1$. Finally $\bL = \bK^\op$ acting on $A \simeq \bK_+$.

By \cite{wa20}, a dimensional skew field has finite dimension over its centre. The rest follows.
\end{proof}

\subsection{Lines, and local endomorphisms}
As in the last section, we first define {\em lines}. We first use the same definition so work is not lost; there will be an equivalent, easier definition.

\begin{definition}
A \emph{line} is a $\Gamma$-image $\gamma[A]$ which is infinite and minimal with respect to inclusion,
% A $\Gamma$-image $\gamma[A]$ is a {\em line} if it is minimal infinite,
i.e.~for all $\gamma'\in\Gamma$, if $\gamma'[A]\le\gamma[A]$ is infinite, then $\gamma'[A]=\gamma[A]$.
We denote the set of lines by $\Lambda$.
\end{definition}

\begin{remark}
Let $k = \min \{\dim \gamma[A]: \gamma \in \Gamma\wedge \gamma[A]\gg 0\}$. Then $\Lambda = \{\gamma[A]: \gamma \in \Gamma \wedge \dim \gamma[A] = k\}$.

Indeed, images $\gamma[A]$ are now all connected since $A$ is connected and $\Gamma$ consists of definable endomorphisms. Moreover lines all have the same dimension by Proposition~\ref{p:lines}.\ref{i:finitekernel}.
\end{remark}

\begin{definition}
For $\varphi$ an endomorphism stabilising $L$ we let $\check\varphi = \rho_L(\varphi) = \varphi \cap L^2$ be its restriction-corestriction to $L$. It is an endomorphism of $L$.

Let $\Gamma_L = \{\check\gamma: \gamma \in \Gamma \wedge \im\gamma\le L\}\leq \End(L)$, and let $\Delta_L = \{\check\delta:\delta \in \Delta\} \leq \End(L)$. They are subrings.
\end{definition}

Note that lines are connected and $\Delta$-invariant, and $\Gamma_L$ commutes with $\Delta_L$.
Note that $\check\gamma\in\Gamma_L$ for any $\gamma\in\gamma_0\Gamma$, for $L = \gamma_0[A]$. By dimensionality, any $\gamma\in\Gamma$ with $\gamma[L] \gg 0$ has $L\cap\ker\gamma$ finite, and $\gamma[L]$ is again a line. In particular, any non-zero $\gamma\in\Gamma_L$ is surjective with finite kernel.

Let `large' mean either infinite or unbounded.

\begin{proposition}\label{p:linereductionL}
Let $L = \gamma_0[A]\in\Lambda$ be any line. Then the following hold.
\begin{enumerate}
\item\label{i:lineminimalL}
$L$ is $(\Gamma_L, \Delta_L)$-minimal.
\item\label{i:AsumoflinesL}
$A$ is a finite sum of $\Gamma$-images of $L$.
\item\label{i:lineinimageL}
If $\gamma[A] \gg 0$, then there is $\gamma' \in \Gamma$ such that $\gamma\gamma'[A]$ is a line contained in $\gamma[A]$.
\item\label{i:transitiveL}
If $L_2 = \gamma_2[A]$ and $L_1 \in \Lambda$, then there is $\gamma \in \Gamma$ such that $\gamma_2\gamma[L_1] = L_2$.
\item\label{i:setidempotentL}
There is $\gamma \in \Gamma$ such that $L = \gamma[A] = \gamma[L]$. Then $\check\gamma$ is surjective with finite kernel.
\item\label{i:GammalargeL} $\Gamma_L$ is as large as $\Gamma$; and $\Delta_L$ as $\Delta$.
\item\label{i:DeltalargeL} The restriction-corestriction map $\rho_L:\Delta\to\Delta_L$ is bijective.
\item\label{i:decompositionL} $A$ is a finite, direct sum of lines.
\end{enumerate}
\end{proposition}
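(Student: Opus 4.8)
The plan is to establish the eight items in the listed order, recognising them as the endomorphism counterparts of Propositions~\ref{p:lines}, \ref{p:GammaLDeltaL}, and---for \ref{i:decompositionL}---\ref{p:linereduction2}, and to recycle those proofs while cashing in the simplifications the endomorphism setting affords: every $\Gamma$-image is connected (as $A$ is), every katakernel is $0$ so that $\rho_L$ is a genuine ring homomorphism and ``equivalent'' may be read as ``equal'', commutation is honest and is inherited by restrictions, and weak invariance coincides with full invariance. Throughout I would assume, by Remark~\ref{r:centraliser}.\ref{i:centraliser}, that $\Gamma = C(\Delta)$ and $\Delta = C(\Gamma)$.

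Strictly descending chains of $\Gamma$-images strictly descend in dimension, so infinite images contain minimal infinite ones, which gives \ref{i:lineinimageL} (the factored shape $\gamma\gamma'[A]$ coming from minimising $\dim\gamma\gamma'[A]$ over the relevant $\gamma'$). Item~\ref{i:AsumoflinesL} repeats Proposition~\ref{p:lines}.\ref{i:Asumoflines}: $\sum_{\gamma}\gamma[L]$ is a finite, connected, $(\Gamma,\Delta)$-invariant subgroup, hence all of $A$ by $(\Gamma,\Delta)$-minimality. Items~\ref{i:transitiveL} and \ref{i:setidempotentL} then follow just as Proposition~\ref{p:lines}.\ref{i:transitive} and \ref{i:setidempotent}. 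And \ref{i:lineminimalL} repeats Proposition~\ref{p:GammaLDeltaL}.\ref{i:lineminimal}: from a connected infinite $(\Gamma_L,\Delta_L)$-invariant $B\le L$ one forms $S=\sum_i\gamma_i\delta_i[B]$ of maximal dimension, notes $S=A$ by $(\Gamma,\Delta)$-minimality, and applies $\gamma_0$ with $\gamma_0[A]=L$ to realise $L$ as a finite sum of $(\Gamma_L,\Delta_L)$-images of $B$, so $B=L$.

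For \ref{i:DeltalargeL}: $\rho_L$ restricts to a surjective ring homomorphism $\Delta\to\Delta_L$ with kernel $\{\delta : L\le\ker\delta\}$; if $L\le\ker\delta$ then $\gamma[L]\le\ker\delta$ for all $\gamma\in\Gamma$ by commutation, so $A=\sum_i\gamma_i[L]\le\ker\delta$ and $\delta=0$---hence $\rho_L|_\Delta$ is an isomorphism, which also makes $\Delta_L$ as large as $\Delta$. The harder half of \ref{i:GammalargeL}, that $\Gamma_L$ is as large as $\Gamma$, I would get by transcribing Proposition~\ref{p:GammaLDeltaL}.\ref{i:Gammalarge}: choose $\gamma_1,\dots,\gamma_n$ with each $\gamma_i[A]$ a line and $\bigcap_i\ker\gamma_i$ of least dimension; some $\gamma_i\Gamma$ must then be large (otherwise one finds $\varphi\ne 0$ with $\varphi[A]$ inside $\bigcap_i\ker\gamma_i$ yet containing a line, contradicting the dimension choice); transport largeness to $\gamma_0\Gamma$ through \ref{i:transitiveL} and finiteness of kernels on lines; finally, using $A=\sum_i\gamma_i[L]$, peel off by Ramsey/Erd\H{o}s--Rado a large subfamily of $\gamma_0\Gamma$ whose restrictions to $L$ through the $\gamma_i$ stay pairwise distinct.

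For \ref{i:decompositionL} I would imitate the quasi-projection machinery of Proposition~\ref{p:linereduction2}. Since $\Kat(\Gamma_L,\Delta_L)=0$, that construction yields, for each line $L$, a \emph{genuine} endomorphism $\pi\in\Gamma=C(\Delta)$ with $\pi[A]=L$ and $\pi|_L=1_L$---provided it is started from a $\gamma_0\in\Gamma$ with $\gamma_0[A]=\gamma_0[L]=L$ and $\gamma_0|_L$ \emph{injective}. To secure such a $\gamma_0$ I would feed the line $L$ into the local Theorem~B (Proposition~\ref{p:localbis})---legitimate since $\Gamma_L,\Delta_L$ act on $L$ with finite kernels and are infinite/unbounded, once one passes to their mutual centralisers in $\DefEnd(L)$---obtaining that $L$ is a $1$-dimensional vector space over a (skew) field receiving $\Gamma_L$, so every nonzero element of $\Gamma_L$ acts invertibly on $L$. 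Then, copying Proposition~\ref{p:linereduction2}.\ref{i:Aquasidirectsum}, one builds inductively lines $L_0,\dots,L_n$ and projections $\pi_i\in\Gamma$ onto $L_i$, at stage $i$ composing the raw projection with $1-\sum_{j<i}\pi_j$ so as to force $\pi_i\pi_j=0$ for $j<i$ (hence $L_i\le\ker\pi_j$, so $\pi_j\pi_i=0$ too); finite-dimensionality halts the process once $\sum_i L_i$ has finite index in $A$, hence equals $A$ by connectedness, so $\sum_i\pi_i$ is an idempotent with image $A$, i.e.\ $\sum_i\pi_i=1$, and the $\pi_i$ are orthogonal idempotents with $\pi_i[A]=L_i$, giving $A=\bigoplus_i L_i$. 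The main obstacle is precisely \ref{i:decompositionL}, and inside it the two points just flagged: converting the quasi-projections into honest endomorphisms (which is what forces the detour through Proposition~\ref{p:localbis}, and requires checking that the bi-centraliser of $(\Gamma_L,\Delta_L)$ in $\DefEnd(L)$ still acts with finite kernels---here one leans on $(\Gamma_L,\Delta_L)$-minimality of $L$), and the orthogonalisation that upgrades a merely dimension-independent finite sum of lines to a genuine internal direct sum. The other items are routine transcriptions of the endogeny-case proofs.
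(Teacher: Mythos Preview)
For items~\ref{i:lineminimalL}--\ref{i:DeltalargeL} your plan is the paper's: both simply record that these are the endomorphism specialisations of Propositions~\ref{p:lines} and~\ref{p:GammaLDeltaL}, and the paper compresses this into a single sentence.

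For item~\ref{i:decompositionL} you diverge, and the detour you propose has a gap. The paper appeals directly to Proposition~\ref{p:linereduction2}.\ref{i:pi}: since here $\Kat(\Gamma_L,\Delta_L)=0$, the quasi-projection produced there has trivial katakernel and is an honest idempotent $\pi_L\in C(\Delta)=\Gamma$ with image $L$. One then sets $H_L=\ker\pi_L$, finds the next line inside the running intersection $\bigcap_j H_j$ via item~\ref{i:lineinimageL}, and observes that the projection onto that intersection along $\bigoplus_j L_j$ is $\Delta$-covariant (hence in $\Gamma$) because both pieces of the splitting are $\Delta$-invariant. Your orthogonalisation of the $\pi_i$ is an equivalent bookkeeping device for the same induction.

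What does not work is securing injectivity of $\check\gamma_0$ through Proposition~\ref{p:localbis}. That proposition requires the \emph{rings you actually feed it}---after passing to mutual centralisers, $C(\Delta_L)$ and $C(C(\Delta_L))$---to act with finite kernels. An element $\varphi\in C(\Delta_L)$ has $\Delta_L$-invariant kernel and image, but neither need be $\Gamma_L$-invariant, so $(\Gamma_L,\Delta_L)$-minimality of $L$ gives you no grip on $\ker\varphi$. Even the preliminary assertion that $\Delta_L$ itself acts with finite kernels is not clear: $L\cap\ker\delta$ is $\Gamma_L$-invariant but need not be $\Delta_L$-invariant when $\Delta$ is noncommutative, so minimality again does not apply. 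In the paper's architecture the conclusion ``surjective endomorphisms in $\Gamma_L\cup\Delta_L$ have trivial kernel'' is obtained only later, inside the inductive proof of Theorem~B (by applying the inductive hypothesis to $L$, which has smaller dimension), \emph{after} the present proposition is already in hand; invoking it here would be circular. Drop the appeal to~\ref{p:localbis} and proceed as the paper does.
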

\begin{proof} Items~\ref{i:lineminimalL}--\ref{i:DeltalargeL} follow from Proposition~\ref{p:lines}, since it also applies to rings of endomorphisms.

To show item \ref{i:decompositionL}, note that by Proposition \ref{p:linereduction2}.\ref{i:pi}, for any line $L$ there is a projection $\pi_L:A\to L$ in $C(\Delta) = \Gamma$ (note that there are no katakernels). Then $(1-\pi_L)[A] = H$ is a $\Delta$-invariant complement for $L$.

Now suppose that lines $L_0,\ldots, L_i$ are given with direct complements $H_i$ such that $L_j \le \bigcap_{k<j}H_k$ for all $j\le i$. Then one sees by induction that
\[A = \Big(\bigoplus_{j\le i}L_j\Big)\oplus\Big(\bigcap_{j\le i}H_j\Big).\]
The projection $\pi:A\to\bigcap_{j\le i}H_j$ with kernel $\bigoplus_{j\le i}L_j$ is $\Delta$-covariant, whence in $C(\Delta) = \Gamma$. If $\bigoplus_{j\le i}L_j<A$, then $\im\pi$ is infinite, and by~\ref{i:lineinimageL}.\ there is $\gamma\in\Gamma$ such that $\pi\gamma[A]$ is a line $L_{i + 1} \le \bigcap_{j\le i}H_j$, and we can iterate. The process must terminate by finite-dimensionality, and when it does one has $A = \bigoplus_i L_i$.
\end{proof}

\subsection{Local endomorphisms, continued}
\begin{proposition}\label{p:linereduction2L}
Suppose that $A$ is $(\Gamma, \Delta)$-minimal, and that for every line $L$ all surjective endomorphisms in $\Gamma_L\cup\Delta_L$ have trivial kernel.\begin{enumerate}
\item\label{i:trivialkernel} If $\gamma\in\Gamma$ and $\gamma[L]$ is a line $L'$, then $L\cap \ker\gamma$ is trivial.
\item\label{i:invertibleL} If $L$ and $L'$ are lines and $\varphi:L\cong L'$ is a $\Delta$-covariant isomorphism, then there is invertible $\gamma\in\Gamma$ inducing $\varphi$ on $L$.
\item All surjective endomorphisms in $\Gamma\cup\Delta$ have trivial kernel.
\item\label{i:lifting} $\Gamma_L = C(\Delta_L)$ and $\Delta_L = C(\Gamma_L)$.
Moreover, for any $\varphi\in\Gamma_L\cap\Delta_L$ there is $\hat\varphi\in\Gamma\cap\Delta$ with $\rho_L(\hat\varphi) = \varphi$.
\end{enumerate}\end{proposition}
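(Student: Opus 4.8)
The plan is to establish the four items in the given order, using each one to pin down more of the structure of the local rings $\Gamma_L$ and $\Delta_L$; as in the rest of Section~\ref{S:linearisation} I assume $\Gamma=C(\Delta)$ and $\Delta=C(\Gamma)$. For item~\ref{i:trivialkernel} the idea is to absorb the ``finite kernel'' hypothesis into ``trivial kernel'' by a transitivity trick. Write $L=\gamma_0[A]$ and suppose $\gamma[L]=L'$ is a line; applying Proposition~\ref{p:linereductionL}.\ref{i:transitiveL} to the pair of lines $L',L$ yields $\gamma^*\in\Gamma$ with $\gamma_0\gamma^*[L']=L$, so $\beta:=\gamma_0\gamma^*\gamma\in\Gamma$ satisfies $\im\beta\le L$ and $\beta[L]=L$. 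Hence $\check\beta=\rho_L(\beta)\in\Gamma_L$ is surjective, so has trivial kernel by hypothesis, and $L\cap\ker\gamma\subseteq L\cap\ker\beta=\ker\check\beta=0$. I would record two consequences at once: for any $\gamma\in\Gamma$ and any line $L$, the image $\gamma[L]=(\gamma\gamma_0)[A]$ is a connected $\Gamma$-image of dimension at most $\dim L$, hence trivial or a line, and in the latter case $\gamma|_L$ is an isomorphism onto it; and, via transitivity again, any two lines are isomorphic by a $\Delta$-covariant isomorphism that is the restriction of an element of $\Gamma$. In particular every non-zero element of $\Gamma_L$ is an automorphism of $L$.

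For item~\ref{i:invertibleL} I would extend $\varphi$ to an automorphism of $A$ using line decompositions. Fix a $\Delta$-equivariant projection $\pi_L\in C(\Delta)=\Gamma$ onto $L$ (Proposition~\ref{p:linereduction2}.\ref{i:pi}; in the endomorphism setting its katakernel is trivial), which as in Proposition~\ref{p:linereductionL}.\ref{i:decompositionL} gives $A=L\oplus\bigoplus_{i<n}L_i$ with $\Delta$-invariant line-summands, and likewise $A=L'\oplus\bigoplus_{i<n}M_i$ with the same number of summands (all lines having equal dimension). Choosing $\Delta$-covariant isomorphisms $\psi_i\colon L_i\cong M_i$ that are restrictions of elements of $\Gamma$, the map $\Phi:=\varphi\oplus\bigoplus_i\psi_i$ is a definable $\Delta$-covariant automorphism of $A$ restricting to $\varphi$ on $L$, hence $\Phi\in C(\Delta)=\Gamma$, and since $\Phi^{-1}$ is again definable and $\Delta$-covariant, $\Phi^{-1}\in\Gamma$, so $\Phi$ is invertible in $\Gamma$. (Definability of $\varphi$ is not a real restriction: a $\Delta$-covariant isomorphism of lines is linear over the Ore fraction field of $\Delta_L$, hence definable.) I would immediately extract the corollary that $\Gamma_L$ is a skew field: for non-zero $\check\gamma\in\Gamma_L$, an automorphism of $L$ by item~\ref{i:trivialkernel}, lift the $\Delta$-covariant automorphism $\check\gamma^{-1}$ of $L$ to an invertible $\hat\gamma\in\Gamma$; then $\pi_L\hat\gamma\in\Gamma$ has image $L$ and restricts to $\check\gamma^{-1}$, so $\check\gamma^{-1}\in\Gamma_L$.

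The remaining two items I would deduce from the module structure of a line. A surjective $\delta\in\Delta$ preserves each summand of a decomposition $A=\bigoplus_j L_j$, so each $\delta|_{L_j}\in\Delta_{L_j}$ is surjective, hence injective by hypothesis, whence $\ker\delta=\bigoplus_j\ker(\delta|_{L_j})=0$. A surjective $\gamma\in\Gamma$ does not respect this decomposition, so instead it suffices to show that $A$ has no non-trivial finite $\Delta$-invariant subgroup and apply this to $\ker\gamma$, which is finite by dimensionality and $\Delta$-invariant; projecting into the $L_j$ reduces this to the same statement for a single line $L$. On a line, $\Gamma_L$ is a skew field acting faithfully by automorphisms, so any non-zero cyclic module over the subring $R_L\le\DefEnd(L)$ generated by $\Gamma_L$ and $\Delta_L$ is infinite, and $(\Gamma_L,\Delta_L)$-minimality then forces it to be $L$; thus $L$ is a simple $R_L$-module and (by Jacobson density and finite-dimensionality) $R_L$ is a matrix ring over a division ring. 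The heart of the matter is that, because $\Gamma_L$ and $\Delta_L$ are commuting skew fields generating $R_L$, one can show $L$ is one-dimensional over $\Gamma_L$, hence $R_L=\Gamma_L^{\op}=\Delta_L$ with $\Gamma_L=C(\Delta_L)$ and $\Delta_L=C(\Gamma_L)$; this is item~\ref{i:lifting}, and it is exactly what is needed to invoke Proposition~\ref{p:localbis} on each line. Granting one-dimensionality, $\Delta_L$ embeds in $\Gamma_L^{\op}$ acting by right multiplications, so a non-zero $v$ in a finite $\Delta_L$-invariant subgroup $N$ has $v\cdot\Delta_L\subseteq N$ of size $|\Delta_L|=\infty$, absurd; this finishes the third item. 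For the ``moreover'' of item~\ref{i:lifting}, a central $\varphi\in\Gamma_L\cap\Delta_L=Z(\Gamma_L)$ is lifted by transporting it to every line-summand of $A=\bigoplus_jL_j$ along a fixed $\Delta$-covariant isomorphism from $\Gamma$; centrality makes the resulting endomorphism commute with all of $\Gamma$ as well, so it lies in $C(\Gamma)=\Delta$, hence in $\Gamma\cap\Delta$, and it restricts to $\varphi$.

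The step I expect to be the genuine obstacle is precisely this one-dimensionality of a line over $\Gamma_L$ (equivalently, that the matrix ring $R_L$ has size one), which underlies both the third item and item~\ref{i:lifting} and must be sequenced without circularity. The template is the idempotent argument in the proof of Proposition~\ref{p:localbis}: if $\dim_{\Gamma_L}L\ge 2$ there is a non-zero, non-injective $\Gamma_L$-linear — hence definable — endomorphism $f$ of $L$, and the goal is to place $f$ in a subring for which $\ker f$, after being saturated under the $\Gamma_L$-action, becomes a proper, non-trivial, connected $(\Gamma_L,\Delta_L)$-invariant subgroup of infinite index, contradicting minimality; it is making $f$ commute with $\Delta_L$ — which requires the centralizer identities, established in tandem — that forces $\ker f$ to be $\Delta_L$-invariant. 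Arranging ``$\Gamma_L$ is a skew field'', ``$L$ is a simple $R_L$-module'', ``$\Gamma_L=C(\Delta_L)$ and $\Delta_L=C(\Gamma_L)$'', ``$\dim_{\Gamma_L}L=1$'' and ``$\Delta_L$ acts by automorphisms'' into a non-circular order is the delicate bookkeeping at the core of this proposition.
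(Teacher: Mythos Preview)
Your treatment of items~\ref{i:trivialkernel} and~\ref{i:invertibleL} is sound and close to the paper's; your item~\ref{i:invertibleL} is a bit more elaborate (the paper just writes $\gamma=\varphi\pi_L+(1-\pi_L)$ when $L'\not\le H$, and a swap on two summands when $L'\le H$), but your full block-diagonal extension works for the same reasons. Your handling of the $\Delta$-half of item~3 is also the paper's.

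The genuine gap is exactly the one you flag: the one-dimensionality of $L$ over $\Gamma_L$, on which you hang both the $\Gamma$-half of item~3 and all of item~\ref{i:lifting}. Your proposed route to it is circular as stated: to force the non-injective $\Gamma_L$-linear $f$ to have $(\Gamma_L,\Delta_L)$-invariant kernel you need $f\in\Delta_L$, i.e.\ $C(\Gamma_L)=\Delta_L$, which is part of what you are trying to prove. Jacobson density only gives you $R_L\cong M_n(D)$ with $D=\End_{R_L}(L)$; it does not by itself force $n=1$, and ``$\Gamma_L$ and $\Delta_L$ are commuting skew fields generating $R_L$'' is not enough either (you have not even argued that $\Delta_L$ is a skew field at this point). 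So the ``delicate bookkeeping'' you describe is not merely bookkeeping: as written, the cycle does not close.

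The paper sidesteps all of this by \emph{direct lifting}, never touching one-dimensionality or density. For $\Gamma_L=C(\Delta_L)$: given $\varphi\in C(\Delta_L)$, put $\gamma=\varphi\pi_L$; this visibly commutes with $\Delta$ (since $\pi_L\in\Gamma$ and $\varphi$ commutes with each $\check\delta$), so $\gamma\in C(\Delta)=\Gamma$ has $\im\gamma\le L$ and $\rho_L(\gamma)=\varphi$. For $\Delta_L=C(\Gamma_L)$: given $\varphi\in C(\Gamma_L)$, \emph{transport} it to every line $L'$ by conjugating along an invertible $\gamma\in\Gamma$ with $\gamma[L]=L'$ (available by your item~\ref{i:invertibleL}); independence of the choice of $\gamma$ follows because any two such differ on $L$ by an element of $\Gamma_L$, which commutes with $\varphi$. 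Assembling the transported maps along a decomposition $A=\bigoplus_iL_i$ gives a definable $\hat\varphi$ which one checks commutes with every $\gamma\in\Gamma$ componentwise, so $\hat\varphi\in C(\Gamma)=\Delta$ and $\rho_L(\hat\varphi)=\varphi$. The ``moreover'' is the same construction, noting that if $\varphi$ also commutes with $\Delta_L$ then $\hat\varphi$ commutes with $\Delta$. For the $\Gamma$-half of item~3 the paper is equally concrete: $\gamma$ surjective has finite kernel, so each $\gamma[L_i]$ is a line and $L_i\cap\ker\gamma=0$ by item~\ref{i:trivialkernel}; one then checks the images $\gamma[L_i]$ are in direct sum (any finite intersection $\gamma[L_i]\cap\sum_{j<i}\gamma[L_j]$ is killed by item~\ref{i:trivialkernel} applied to a suitable projection), whence $\ker\gamma=0$. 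None of this requires knowing the module structure of $L$ beyond what items~\ref{i:trivialkernel} and~\ref{i:invertibleL} already provide.
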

\begin{proof}\begin{enumerate}
\item Take any $\gamma'$ with $\gamma'[L'] = L$. Then $\rho_L(\gamma'\gamma)\in\Gamma_L$ is surjective, and has trivial kernel by assumption. Hence $\ker\gamma\cap L$ is trivial.
\item Consider a direct $\Delta$-invariant complement $H$ for $L$, together with the projection $\pi_L:A\to L$ along $H$. So $A = L\oplus H$. Note that
$\pi_L[L']$ is $0$ if $L'\le H$, and $L$ otherwise by connectivity, in which case the kernel $L'\cap H$ must be trivial by \ref{i:trivialkernel}.

If $L'\cap H = \{0\}$, we can put $\gamma = \varphi\pi_L + (1-\pi_L)$. Then $\gamma$ equals $\varphi$ on $L$, and $1$ on $H$. Moreover $\gamma$ commutes with $\Delta$, so $\gamma\in C(\Delta) = \Gamma$. We have $\gamma^{-1} = \varphi^{-1}\pi_{L'} + (1-\pi_{L'})$, so $\gamma$ is invertible.

If $L'\le H$, we can suppose $L = L_0$ and $L' = L_1$ in a direct sum decomposition $A = \bigoplus_{i < n} L_i$ given by Proposition \ref{p:linereductionL}.\ref{i:decompositionL}. We can then take
\[\gamma = \sigma\pi_0 + \sigma^{-1}\pi_1 + (1-\pi_0-\pi_1).\]
This is $\Delta$-covariant, and $\gamma^{-1} = \gamma$.
\item Recall that there are finitely many lines $L_i$ such that $A = \bigoplus_i L_i$. 

Consider a surjective $\delta\in\Delta$. Then $\delta$ has finite kernel by dimensionality, so $\rho_{L_i}(\delta)\in\Delta_{L_i}$ is surjective, and has trivial kernel by assumption. It follows that $\delta = \sum_i\delta\pi_i$ has trivial kernel, since $\delta[L_i]\le L_i$ for all $i$.

Next, consider surjective $\gamma\in\Gamma$. Then $\gamma$ has finite kernel, so $\gamma[L_i] = L'_i$ is again a line for any $i$, and $L_i\cap\ker\gamma$ must be trivial by \ref{i:trivialkernel}.

But $\sum_iL'_i = \gamma[\sum_iL_i] = \gamma[A] = A$. By dimensionality, $\sum_iL'_i$ must be almost direct. So suppose $i$ is minimal such that $L'_i\cap\sum_{j<i}L'_j$ is finite non-trivial. Then $(1-\sum_{j<i}\pi_{L_j})[L']$ must be a line $L''$, and $L'\cap\sum_{j<i}L'_j = L'\cap\ker(1-\sum_{j<i}\pi_{L_j})$ is trivial by \ref{i:trivialkernel}, a contradiction. It follows that the sum $\sum_iL'_i$ is direct, and $\gamma$ has trivial kernel.
\item If $\varphi:L\to L$ is $\Delta_L$-covariant, then define $\gamma:A\to A$ by $\gamma = \varphi\pi_L$. Then $\gamma\in C(\Delta) = \Gamma$, and $\check\gamma = \varphi$, so $\varphi \in \Gamma_L$.

Now let $\varphi:L\to L$ be $\Gamma_L$-covariant. We need to find $\delta\in C(\Gamma) = \Delta$ with $\check\delta = \varphi$.

For any line $L'$ choose $\gamma\in\Gamma$ with $\gamma[L] = L'$.
Then $\gamma$ induces an isomorphism $L'\cong L$ by \ref{i:trivialkernel}. By \ref{i:invertibleL}.\ we may assume that $\gamma\in\Gamma$ is invertible. Now $\gamma\varphi\gamma^{-1}$ leaves $L'$ invariant; let $\varphi_{L'}$ be the induced endomorphism of $L'$. If $\gamma'\in\Gamma$ is invertible and also induces an isomorphism $L\cong L'$, then $\rho_L(\gamma^{-1}\gamma')\in\Gamma_L$ commutes with $\varphi$. Thus
\[\gamma'\varphi\gamma'^{-1} = 
\gamma(\gamma^{-1}\gamma')\varphi\gamma'^{-1} = \gamma\varphi(\gamma^{-1}\gamma')\gamma'^{-1} = \gamma\varphi\gamma^{-1},\]
so $\varphi_{L'}$ does not depend on the choice of $\gamma$.

Consider non-zero $\check\gamma''\in\Gamma_{L'}$. By \ref{i:invertibleL} we may assume that it is induced by an invertible $\gamma''\in\Gamma$ stabilising $L'$. Then $\gamma''\gamma$ is another invertible element of $\Gamma$ inducing an isomorphism $L\cong L'$, so 
\[\gamma''\varphi_{L'}\gamma''^{-1} = (\gamma''\gamma)\varphi(\gamma''\gamma)^{-1} = \varphi_{L'},\]
and $\varphi_{L'}$ commutes with $\check\gamma''$.

In fact, if $\gamma_1$ induces an isomorphism $L'\to L''$, then $\varphi_{L''}\gamma_1 = \gamma_1\varphi_{L'}$ on $L'$. To see this, consider invertible $\gamma_2,\gamma_3\in\Gamma$ inducing isomorphisms $\gamma_2:L\cong L'$ and $\gamma_3:L\cong L''$. Then $\gamma_1\gamma_2:L\cong L''$ is another induced isomorphism, and on $L'$ we have
\[\varphi_{L''}\gamma_1 = (\gamma_1\gamma_2)\varphi(\gamma_1\gamma_2)^{-1}\gamma_1 = \gamma_1(\gamma_2\varphi\gamma_2^{-1}) = 
\gamma_1\varphi_{L'}.\]

Finally, let $A = \bigoplus_iL_i$ be a direct decomposition of $A$ into lines, and define
\[\hat\varphi(\sum_i\ell_i) = \sum_i\varphi_{L_i}(\ell_i).\]
This is definable, well-defined and extends $\varphi$. Now if $\gamma\in\Gamma$ and $\gamma_{i,j} = \pi_{L_i}\gamma\pi_{L_j}$, then $\gamma_{i,j}$
is either $0$ or an isomorphism from $L_j$ to $L_i$. Hence
\[\hat\varphi\gamma(\sum_j\ell_j) = \sum_{i,j}\varphi_{L_i}(\pi_{L_i}\gamma\pi_j(\ell_j))
 = \sum_{i,j}\pi_{L_i}\gamma\pi_j\varphi_{L_j}\ell_j
 = \sum_j\gamma\varphi_{L_j}\ell_j = \gamma\hat\varphi(\sum_j\ell_j).\]
It follows that $\hat\varphi\in C(\Gamma) = \Delta$, and $\varphi = \rho_L(\hat\varphi)$, whence $\varphi\in\Delta_L$.

Note that if $\varphi$ also commutes with $\Delta_L$, then $\hat\varphi$ commutes with $\Delta$. Thus $\varphi\in\Gamma\cap\Delta$. 
\qedhere
\end{enumerate}
\end{proof}

\subsection{The induction}
We can now prove Theorem B. We recall its statement.
\begin{theoremb}
In a finite-dimensional theory, let $A$ be a definable, connected, abelian group. Let $\Gamma$ and $\Delta$ be two invariant rings of definable endomorphisms of $A$ such that $A$ is $(\Gamma, \Delta)$-minimal. Suppose that $\Gamma$ and $\Delta$ commute, both are infinite and one of them is unbounded. Then there is a definable field $\bK$ such that $A$ is definably a $\bK$-vector space of finite linear dimension, and both $\Gamma$ and $\Delta$ act $\bK$-linearly.
\end{theoremb}

\begin{proof} By Remark~\ref{r:centraliser}.\ref{i:centraliser}.\ we may assume that $\Gamma = C(\Delta)$ and $\Delta = C(\Gamma)$.
We shall use induction on the dimension $\dim A$.
If $A$ has no proper $\Gamma$- or $\Delta$-lines, all endomorphisms in $\Gamma\cup\Delta$ have finite kernel by dimensionality, and we finish by Proposition \ref{p:localbis}.

Otherwise, we can apply the inductive hypothesis to any line $L$, with $C(\Delta_L)\ge\Gamma_L$ and $C(C(\Delta_L))\ge\Delta_L$ acting on it. So there is a field $\bK_L\le C(\Delta_L)\cap C(C(\Delta_L))$ such that $L$ is a finite-dimensional $\bK_L$-vector space, on which both $C(\Delta_L)$ and $C(C(\Delta_L))$ act $\bK_L$-linearly. In particular all surjective endomorphisms in $C(\Delta_L)\cup C(C(\Delta_L))$ have trivial kernel. By Proposition \ref{p:linereduction2L} we have $C(\Delta_L) = \Gamma_L$ and $C(C(\Delta_L)) = \Delta_L$, so $\bK_L \le \Gamma_L\cap\Delta_L$.

By Proposition \ref{p:linereduction2L}.\ref{i:lifting}., for every $\varphi\in\Gamma_L\cap\Delta_L$ there is $\hat\varphi\in\Gamma\cap\Delta$ restricting to $\varphi$ on $L$. Consider $\bK = \langle\hat\varphi:\varphi\in \bK_L\rangle\subseteq\Gamma\cap\Delta$, a commutative subring. But the restriction-corestriction map $\rho_L:\Delta\to\Delta_L$ is a bijection. It follows that $\bK$ is a definable field, $A$ is a finite-dimensional $\bK$-vector space, and both $\Gamma$ and $\Delta$ act $\bK$-linearly.\end{proof}

\end{document}